\documentclass[preprint,numbers,sort&compress]{elsarticle}

\usepackage{amssymb}
\usepackage[figuresright]{rotating}
\usepackage{amsmath}
\usepackage[letterpaper,margin=1in]{geometry}
\usepackage{graphicx}
\usepackage{booktabs}
\usepackage{hyperref}
\usepackage{amsmath,amssymb,amsfonts}%
\usepackage{amsthm}%
\usepackage{mathrsfs}%
\usepackage[table]{xcolor}
\usepackage{textcomp}%
\usepackage{pifont}
\usepackage{manyfoot}%
\usepackage{booktabs}%
\usepackage{algorithm}%
\usepackage{multirow}
\usepackage{subcaption}
\usepackage{algorithmicx}%
\usepackage{algpseudocode}%
\usepackage{listings}%
\newtheorem{lemma}{Lemma}
\newtheorem{remark}{Remark}
\newtheorem{definition}{Definition}
\newtheorem{theorem}{Theorem}
\usepackage[margin=1in]{geometry}
\usepackage{times}
\usepackage{hyperref}
\usepackage{graphicx}
\usepackage{float}
\usepackage{booktabs}
\usepackage{caption}

\journal{Nuclear Physics B}

\begin{document}
\begin{frontmatter}



\title{Starter-Iterator Neural Operator: A Unified Architecture for High-Fidelity Forward and Inverse PDE Problems}

\author[inst1]{Kuilin Qin}
\author[inst1]{Lianfang Wang}
\author[inst2]{Xu Sun}
\author[inst2]{Jiwei Jia}
\author[inst3]{Yu Wang}
\author[inst4]{Yong Wang}
\author[inst1]{Yuping Duan\corref{cor1}}

\cortext[cor1]{Corresponding author}
\ead{doveduan@gmail.com} 

\affiliation[inst1]{organization={School of Mathematical Sciences, Beijing Normal University},
            postcode={100875},
            state={Beijing},
            country={P.R. China}}

\affiliation[inst2]{organization={School of Mathematics, Jilin University},
            postcode={130015},
            state={Jilin},
            country={P.R. China}}

\affiliation[inst3]{organization={Key Laboratory of Digital Technology in Medical Diagnostics of Zhejiang},
            postcode={130000},
            state={Zhejiang},
            country={P.R. China}}

\affiliation[inst4]{organization={School of Physics, Nankai University},
            postcode={300071},
            state={Tianjin},
            country={P.R. China}}

\begin{abstract}
 Operator learning is an emerging interdisciplinary field that integrates machine learning with scientific computing. 
 {By mapping infinite-dimensional function spaces, this approach provides an efficient surrogate modeling framework for high-dimensional partial differential equations (PDEs). Compared to traditional numerical solvers, it achieves a superior trade-off between computational complexity and approximation accuracy, demonstrating significant advantages in many-query tasks such as real-time prediction and parameter sweeps.}
 Given the stringent accuracy requirements of both forward simulation and inverse inference, as well as the precision bottlenecks of existing operator learning methods in handling complex boundaries or long-term evolution, we propose the Starter-Iterator Neural Operator (SINO). Our framework reinterprets the initialization strategies and iterative formats of traditional iterative methods through neural networks, establishing  {an efficient approach} 
 for spectral-spatiotemporal collaborative modeling. Specifically, the frequency-domain initialization module captures globally stable low-frequency features, while the time-domain learning module focuses on optimizing local solution residuals, thereby effectively overcoming the inherent limitations of conventional single-domain modeling approaches. Extensive experiments on typical dynamical systems such as the Navier-Stokes equations and acoustic wave equations, as well as practical applications including super-resolution imaging and weather forecasting, demonstrate that SINO achieves outstanding performance in numerical accuracy, generalization capability, and robustness.
\end{abstract}

\begin{graphicalabstract}
    \centering
    \includegraphics[width=0.98\linewidth]{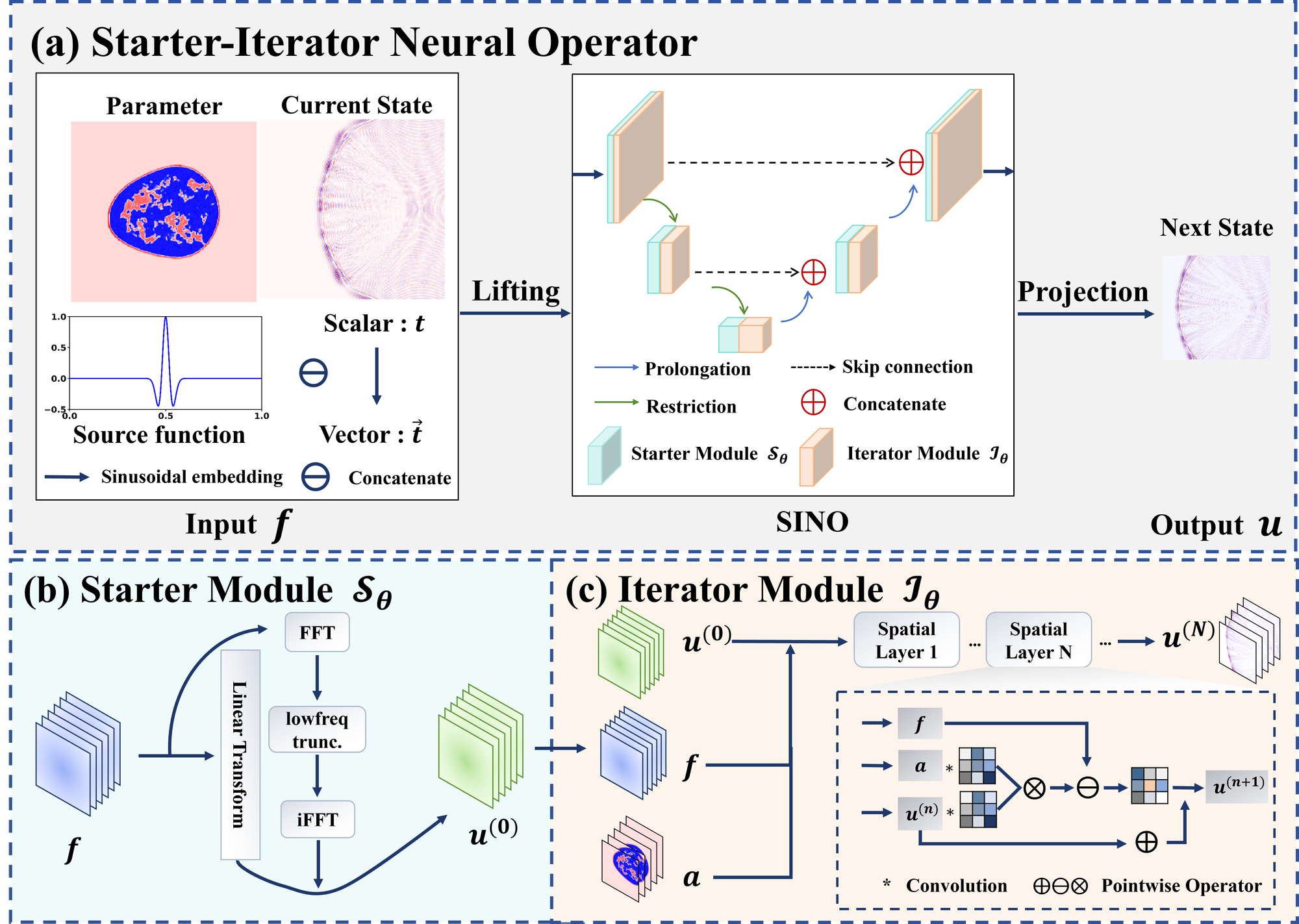}
\end{graphicalabstract}

\begin{highlights}
    \item We propose SINO, a novel neural operator framework inspired by iterative numerical schemes, and establish a theoretical guarantee for its universal approximation property across a broad class of operator learning tasks.
    \item By integrating the spectral characteristics of classical iterative methods, SINO mitigates the spectral bias inherent in standard deep learning models and enhances multi-scale feature learning, leading to improved approximation accuracy.
    \item We evaluate SINO across diverse benchmarks, including forward/inverse problems and complex system modeling, consistently demonstrating enhanced accuracy and stability compared to state-of-the-art baselines.
\end{highlights}

\begin{keyword}
Operator learning \sep forward problem \sep inverse problem \sep iterative method \sep frequency learning



\end{keyword}

\end{frontmatter}



\section{Introduction}\label{sec1}

Forward and inverse problems are fundamental tasks in scientific and engineering computing~\cite{li2025fully, tang2025optical, ding2025scitoolagent}. Forward problems follow a deductive approach: they employ known principles, system parameters, and physical laws to construct mathematical models that predict or simulate system behavior. Such problems are widely encountered in structural mechanics~\cite{he2024multi}, computational acoustics~\cite{tsakmakidis2025discovery}, and weather forecasting~\cite{allen2025end, bauer2023deep}. In contrast, inverse problems~\cite{aarts2025physics, tarantola2005inverse} adopt an inductive strategy, using observational data to infer the internal properties, unknown parameters, or initial conditions of a system through mathematical inversion. They play a central role in applications such as seismic tomography~\cite{lyu2025efficient} and medical image reconstruction~\cite{zhao2023energy}. Across a broad range of fields, such as geophysics, materials science, and environmental science, forward modeling and inverse problem solving collectively establish an iterative process that couples theoretical prediction with data-driven refinement. This interplay has become a fundamental component of modern scientific investigation.

Forward problems are typically modeled using differential equations \cite{carleo2019machine,brunton2024promising} that describe the local behavior of a system within an infinitesimal region, adhering to principles such as force balance and energy conservation \cite{kontolati2024learning}. A common time-dependent formulation is:
    \begin{equation}\label{pde}
     \left\{
       \begin{aligned}
        & \frac{\partial \boldsymbol{w}(\mathbf{x}, t)}{\partial t} + \mathcal{D}_a\boldsymbol{w}(\mathbf{x}, t) = \boldsymbol{s}(\mathbf{x}, t), & (\mathbf{x}, t) &\in \Omega \times (0, \infty), \\
        & \boldsymbol{w}(\mathbf{x}, 0) = \boldsymbol{w}_0(\mathbf{x}), & \mathbf{x} &\in \Omega, \\
        & \boldsymbol{w}(\mathbf{x}, t) = \boldsymbol{g}(\mathbf{x}, t), & \mathbf{x} &\in \partial \Omega,
    \end{aligned} 
      \right.
    \end{equation}
where \(\mathcal{D}_a\) is a spatial differential operator parameterized by a variable coefficient field \(a(\mathbf{x})\), and \(\boldsymbol{s}(\mathbf{x}, t)\) represents the source term. This formulation emphasizes the rate of change or derivatives of field quantities, like temperature or displacement. Additionally, forward problems can also be represented using integral equations, which account for interactive and cumulative effects across various regions of a system \cite{zappala2024learning,georgiou2025fredholm}. A typical example is the Fredholm integral equation of the second kind:
    \begin{equation}\label{integral}
       \boldsymbol{w}(\mathbf{x}) = \boldsymbol{s}(\mathbf{x}) + \lambda \int_{\Omega} K(\mathbf{x},  \mathbf{y}) \boldsymbol{w}(\mathbf{y}) \, d\mathbf{y},
    \end{equation}
where \(\boldsymbol{s}(\mathbf{x})\) describes the known component or forcing term, \(\boldsymbol{w}(\mathbf{x})\) is the unknown field that typically covers global or boundary interactions, {and the kernel $K(\mathbf{x},  \mathbf{y})$ defines the action of the integral operator and can be viewed as the infinite-dimensional analogue of a matrix coefficient in continuous function spaces.} This integral form captures global or boundary interactions, making it particularly useful for problems involving infinite domains or complex geometries. 
    
The inverse problem involves determining the unknown parameters or inputs of a system based on observed data. For the differential equation approach, generally as  Eq.~\eqref{pde}, the inverse problem typically involves estimating the spatial coefficient field \(a(\mathbf{x})\) or the source term \(\boldsymbol{s}(\mathbf{x}, t)\) given observations of the field \(\boldsymbol{w}(\mathbf{x}, t)\) over some spatial and time domains \cite{azizzadenesheli2024neural}. For the integral equation approach, shown as  Eq.~\eqref{integral}, the inverse problem may involve finding the unknown boundary source density \(\sigma(\xi)\) given boundary measurements \(\boldsymbol{w}(\mathbf{x})\) \cite{wu2022neural}.
    
    
Forward and inverse problems commonly use numerical discretization techniques such as finite element methods \cite{reddy1993introduction}, finite difference methods \cite{smith1985numerical}, finite volume methods \cite{eymard2000finite}, and spectral methods \cite{shen2011spectral}. These methods transform mathematical models describing continuous systems into discrete linear or nonlinear algebraic equations, laying the groundwork for subsequent solution using iterative numerical methods \cite{zhang2024blending}. However, iterative methods are crucial for large-scale scientific computing but have notable limitations. Linear iterative methods~\cite{jiao2025one}, such as Jacobi ~\cite{zhang2024blending} and Gauss-Seidel ~\cite{ortega1966nonlinear}, rely heavily on matrix properties and are only effective for specific structures, struggling with slow convergence and high computational costs in ill-conditioned cases. Krylov subspace methods~\cite{saad1981krylov}, like conjugate gradient~\cite{hestenes1952methods} and generalized minimum residual method~\cite{saad1986gmres}, are limited by preconditioner design, with constructing an effective preconditioner being a major challenge. For nonlinear problems, Newton's method~\cite{sherman1978newton} offers only local convergence, is sensitive to initial guesses, and requires computing the Jacobian and solving a linear system at each step, leading to significant computational and memory demands.

{Neural operators have emerged as effective alternatives to traditional numerical solvers for solving classical forward and inverse problems~\cite{NeuralInverse, li2024physics, liu2025difffno}, by learning continuous mappings between infinite-dimensional Banach spaces \cite{kovachki2023neural}. }
{Recent advances in operator learning are deeply rooted in the finite basis representations of solution operators. From this perspective, a neural operator approximates the target mapping via a finite expansion, whose coefficients depend on the input function and whose basis functions span the output solution space. Consequently, the treatment of basis functions serves as the defining distinction among operator learning frameworks, categorizing the landscape into three primary classes. The first class comprises reduced-basis neural operators, which utilize basis functions pre-computed via classical model reduction techniques like Proper Orthogonal Decomposition (POD) and Principal Component Analysis (PCA). Representative frameworks include POD-NN~\cite{hesthaven2018non}, PCANet~\cite{bhattacharya2021model}, and DIPNet~\cite{o2022derivative}. By exploiting the low-rank structure of the solution manifold, these approaches reduce the learning task to mapping inputs to expansion coefficients. While offering robust numerical stability and high computational efficiency—especially for compactly representable solutions—their generalization is inherently bounded by the a priori fixed basis; performance typically degrades when test data deviates from the training manifold. To enhance flexibility and expressivity, a second class directly learns basis functions from data. Epitomized by DeepONet~\cite{LuDeepOnet} and its variants~\cite{he2024geom, xu2023transfer, lu2022comprehensive}, these methods employ branch-trunk architectures to co-optimize coefficient mappings and basis functions. By bypassing pre-computed POD/PCA modes, they achieve superior cross-domain generalization. However, lacking explicit structural constraints or physical priors, these learned bases can lead to diminished approximation accuracy and training instabilities when applied to complex, chaotic dynamical systems. 
A third class employs prescribed functional bases, analytically defined based on underlying physics. Notable examples include the Fourier Neural Operator (FNO)~\cite{li2020fourier}, Spectral Neural Operator~\cite{fanaskov2023spectral}, Wavelet Neural Operator~\cite{tripura2023wavelet}, and Laplace Neural Operator~\cite{cao2024laplace}. These frameworks leverage integral transforms (Fourier, wavelet, or Laplace) to represent operators in the spectral domain. By embedding strong inductive biases, they achieve high efficiency for systems with well-defined spectral characteristics. In particular, FNO and its derivatives~\cite{li2025d, lehmann20243d, rahman2022uno} excel in PDEs dominated by low-frequency dynamics. Nonetheless, due to inherent spectral sampling limits and truncation effects, their performance often deteriorates under high-frequency regimes, compromising temporal stability in long-term autoregressive forecasting.}
    
Furthermore, architectures inspired by classical iterative algorithms offer a compelling alternative due to their inherent convergence guarantees. For example, MgNO \cite{he2023mgno} incorporates multiscale iterative strategies and demonstrates performance superior to both categories mentioned above for several benchmark PDE problems.  However, MgNO typically requires a substantial number of parameters to achieve high accuracy due to its reliance on a zero-initialization scheme, and its emphasis on local features further limits its ability to capture the global dependency structures that are fundamental in many physical systems. The HINTs \cite{zhang2024blending} framework enhances convergence speed and accuracy by embedding a DeepONet-based solver within Jacobi iterations. Despite its effectiveness, this approach depends on explicit access to the underlying linear operator, which restricts its applicability to complex tasks such as super-resolution and data-driven weather prediction.

In this paper, we present the Starter-Iterator Neural Operator (SINO) framework, designed to address two persistent challenges in operator learning: accuracy saturation and the resulting instability in long-horizon temporal prediction. To overcome these limitations, we introduce a frequency-based starter module that provides an informed initial state for a subsequent iterator module. By capturing the dominant low-frequency components of the solution, the starter establishes a reliable coarse representation that mitigates spectral bias. This allows the iterator—implemented via a parameterized iterative scheme—to focus on refining high-frequency details and enhancing overall approximation accuracy. Extensive experiments demonstrate that this functional decomposition is highly effective: the starter reconstructs the fundamental global structure, while the iterator progressively resolves multi-scale features. This synergy significantly improves temporal robustness, enabling SINO to maintain stable performance over extended horizons and outperform state-of-the-art operator-learning approaches in long-term forecasting. The main contributions of this work are summarized as follows:   
    \begin{itemize}
        \item  We propose SINO, a novel neural operator architecture inspired by classical iterative numerical schemes, and establish a theoretical guarantee for its universal approximation property within a generalized operator learning framework.
        \item  By integrating the spectral characteristics of iterative solvers, SINO mitigates the spectral bias inherent in standard deep learning models and enhances multi-scale feature representation, effectively transcending the accuracy bottlenecks of fixed-basis architectures.
        \item We evaluate SINO across diverse benchmarks, including forward PDE solving, ill-posed inverse problems in super-resolution, and chaotic system modeling for weather prediction. The results consistently demonstrate enhanced accuracy and stability compared to strong baselines.
    \end{itemize}

The paper is organized as follows. In Section~\ref{sec2}, we define the problem setup, discuss the mathematical motivation, and provide a detailed description of the SINO architecture alongside its approximation theory. In Section~\ref{sec3}, we present numerical results, including an analysis of SINO’s spectral properties and evaluations on multiple PDE, microscopy, and meteorological datasets. Finally, Section~\ref{sec4} provides an in-depth discussion and summarizes our conclusions. { For clarity, essential notations are summary in Table ~\ref{tab:notations}.}

    \begin{table}[t]
        \centering
        \caption{Summary of notations used in this paper.}
        \label{tab:notations}
        \begin{tabular}{ll}
        \hline
        \textbf{Symbol} & \textbf{Description} \\ \hline
        $\Omega$ & Physical spatial domain. \\
        $\mathbb{T}^d$ & periodic torus, identified with $[0, 2\pi]^d$. \\
        $\boldsymbol{f}, \boldsymbol{u}$ & Input source function and target solution function in the physical space. \\
        $\mathcal{X}, \mathcal{Y}$ & Input and output function spaces, i.e., Sobolev space $H^s(\mathbb{T}^d; \mathbb{R}^{\text{d}_{\boldsymbol{f}}})$ and $L^2(\mathbb{T}^d; \mathbb{R}^{\text{d}_{\boldsymbol{u}}})$. \\
        $\boldsymbol{f}_s, \boldsymbol{u}^{(n)}_s$ &  Latent input and approximate latent solution $\boldsymbol{u}$ at scale $s$ and iteration step $n$. \\ 
        $\mathcal{D}_a$ & Spatial differentiable operator in Eq.~\eqref{pde}, which can be linear or nonlinear.  \\
        $\mathcal L_{a}$ & Linearized the nonlinear spatial operator $\mathcal{D}_a$ with Picard-type representation. \\
        $\mathcal G$ & Exact latent solution operator with $\boldsymbol u=\mathcal G(\boldsymbol f)$. \\
        $\mathcal G_\theta$ & Parameterized neural approximation of $\mathcal G$. \\
        $\mathcal A$ & Linear or approximately linearized  operator that admits general relation $\mathcal A\boldsymbol u=\boldsymbol f$. \\
        $\mathcal A_\theta^{(n)}$ & Parameterized approximation of $\mathcal A$ at the $n$-th refinement step. \\
        $\mathcal B$ & Preconditioning or correction operator used in the fixed-point iteration. \\
        $\mathcal B_\theta^{(n)}$ & Parameterized correction operator at the $n$-th refinement step. \\
        
        $\mathrm{Id}_{\mathcal X},\mathrm{Id}_{\mathcal Y}$ & Identity operator on $\mathcal X$ and $\mathcal Y$ space, respectively. \\
        
        $\mathcal S$ & Starter operator that provides an initial approximation. \\
        
        $\mathcal I$ & Single step iterative operator that performs iterative refinement. \\

        $\mathcal I^{\infty}$ &  Iterative operator with multi-steps iteration even infinite steps. \\
        
        $\mathcal S_\theta$ & Learnable starter operator, typically implemented by a spectral neural operator. \\
        
        $\mathcal I_\theta^{(n)}$ & Learnable single-step iterative operator at $n$-th iteration step, also donated as $\mathcal{I}_{\theta}$ \\
        $\mathcal I_\theta^{N}$ & Learnable multi-steps iterative operator with $N$ steps. \\
        $\mathcal P_K$ & Fourier projection operator that retains modes up to the cutoff $K$. \\

        $\mathcal G_K$ & Fourier-truncated approximation of $\mathcal G$ using modes up to $K$. \\
        
        $\mathcal E^{(n)}$ & Error-transition operator donated as
        $\mathcal E^{(n)}=\mathrm{Id}_{\mathcal Y}-\mathcal B_\theta^{(n)}\mathcal A_\theta^{(n)}$. \\
        
        $\mathcal R_s$ & Restriction operator from scale $s$ to a coarser scale. \\
        
        $\mathcal T_s$ & Prolongation or transposed-convolution operator from scale $s$ to a finer scale. \\
        
        $R_\theta$ & Learnable complex-valued spectral weights. \\
        
        $W_\theta$ & Pointwise linear transformation in the neural network. \\
        
        $\sigma(\cdot)$ & Nonlinear activation function, e.g., GELU or ReLU. \\
        \hline
        \end{tabular}
        \end{table}

\section{Our Methodologies}\label{sec2}

{ \subsection{Linearization of Forward and Inverse Problems}

Given the forward problems to solve solution $\boldsymbol{w}(\cdot, t)$, belonging to an appropriate Sobolev space $\mathcal{Y}=L^2(\mathbb{T}^d; \mathbb{R}^{\text{d}_{\boldsymbol{u}}})$, governed by the Eq.~\eqref{pde},  we first define a constant time step $\Delta t > 0$ 
and apply the backward Euler semi-discretization in time to have
\begin{equation*}
\frac{\boldsymbol{w}(t+\Delta t)-\boldsymbol{w}(t)}{\Delta t}
    + \mathcal{D}_a\boldsymbol{w}(t+\Delta t)
    =
    \boldsymbol{s}(t+\Delta t), \qquad t>0.
\end{equation*}
This implicit relation can be rearranged into an operator form:
\begin{equation} \label{eq:implicit_step}
(\mathrm{Id}_{\mathcal{Y}} + \Delta t \mathcal{D}_a) \boldsymbol{w}(t+\Delta t) = \boldsymbol{w}(t)
    + \Delta t\,\boldsymbol{s}(t+\Delta t).
\end{equation}
In the case where $\mathcal{D}_a$ is a linear spatial operator, the existence of the solution to Eq.~\eqref{eq:implicit_step} depends on the well-posedness of the operator $\mathcal{A} := \mathrm{Id}_{\mathcal{Y}} + \Delta t \mathcal{D}_a$. 
However, for a nonlinear operator $\mathcal{D}_a$, Eq.~\eqref{eq:implicit_step} necessitates the solution of a nonlinear system at each temporal increment.  To preserve a consistent linear solver framework, we employ a Picard-type linearization strategy~\cite{deblois1997linearizing}. We assume that $\mathcal{D}_a$ admits a representation through a state-dependent linear operator $\mathcal{L}_a(\cdot)$, such that the term at the $t+\Delta t$-th level is approximated by freezing the nonlinearity at the previous state:
\begin{equation*}
    \mathcal{D}_a(\boldsymbol{w}(t+\Delta t)) \approx \mathcal{L}_a(\boldsymbol{w}(t))\boldsymbol{w}(t+\Delta t).
\end{equation*}
Substituting this approximation into the backward Euler scheme yields the linearized system:
\begin{equation*}
    \big(\mathrm{Id}_{\mathcal Y}+\Delta t  \mathcal{L}_a(\boldsymbol{w}(t))\big)
    \boldsymbol{w}(t+\Delta t)
    =
    \boldsymbol{w}(t)
    + \Delta t\,\boldsymbol{s}(t+\Delta t).
\end{equation*}
By defining the unknown state $\boldsymbol{u}:=\boldsymbol{w}(t + \Delta t)$ and the cumulative source term $\boldsymbol{f}:=\boldsymbol{w}(t)
    + \Delta t\,\boldsymbol{s}(t + \Delta t)$, both the linear and linearized nonlinear problems can be encapsulated in the following unified operator equation:
\begin{equation} \label{eq:linear}
    \mathcal{A}\boldsymbol{u}=\boldsymbol{f},
\end{equation}
where the forward operator $\mathcal A$ is defined as:
\begin{equation}
    \mathcal{A}=
    \begin{cases}
    \mathrm{Id}_{\mathcal{Y}}+\Delta t\,\mathcal{D}_a,
    & \text{if } \mathcal{D}_a \text{ is linear},\\[1mm]
    \mathrm{Id}_{\mathcal Y}+\Delta t\,\mathcal{L}_a(\boldsymbol{w}(t)),
    & \text{if } \mathcal{D}_a \text{ is nonlinear and Picard-linearized}.
    \end{cases}
\end{equation}
In both scenarios, the time-evolutionary problem is reduced to a sequence of stationary operator equations of the form \eqref{eq:linear} at each discrete temporal node, where $\boldsymbol{u}$ represents the unknown state at the $(t+\Delta t)$ time level and $\boldsymbol{f}$ is a known element in the observation space, aggregating information from the previous state and the discretized source term. Consequently, the forward problem is recast as evaluating $\boldsymbol{u}$ under the operator $\mathcal{A}$, which represents either the exact linear implicit operator or its Picard-linearized counterpart. This framework extends naturally to inverse problems governed by the integral formulation in Eq.~\eqref{integral}. The operator $\mathcal{A}$ serves as the forward measurement map, projecting the latent field from the parameter space to the data space. By integrating the implicit semi-discretization of PDEs with integral representations, Eq.~\eqref{eq:linear} establishes a unified operator-theoretic representation for both forward simulation and inverse reconstruction. It ensures that the structural properties of $\mathcal{A}$ are rigorously and uniquely determined by the underlying differential operators or integral kernels.
}

\subsection{Starter-Iterator Framework for the Linear Operator System}
Our objective is to identify a suitable functional representation that facilitates the parameterization of the linear or approximately linearized operator equations. Motivated by classical stationary iterative methods, we approach the solution $\boldsymbol{u}$ of the operator equation~\eqref{eq:linear} via a fixed-point iteration. Let $\mathcal{X} = H^s(\mathbb{T}^d; \mathbb{R}^{\text{d}_{\boldsymbol{f}}})$ denote the data space containing the right-hand side $\boldsymbol{f}$, and $\mathcal{Y}$ be the solution space. Given an initial approximation $\boldsymbol{u}^{(0)} \in \mathcal{Y}$, the sequence of iterates $\{\boldsymbol{u}^{(n)}\}_{n=0}^{\infty}$ is generated by the following recursion:
\begin{equation}\label{eq:iter_format}
\boldsymbol{u}^{(n+1)} = \boldsymbol{u}^{(n)} + \mathcal{B} \left( \boldsymbol{f} - \mathcal{A}\boldsymbol{u}^{(n)} \right), \quad n=0, 1, 2, \ldots,
\end{equation}
where $\mathcal{B}: \mathcal{X} \to \mathcal{Y}$ is the preconditioner acting as an approximation to the inverse of $\mathcal{A}$ if exists, and $\boldsymbol{r}^{(n)} = \boldsymbol{f} - \mathcal{A}\boldsymbol{u}^{(n)}$ is the residual at the $n$-th iteration. The convergence of this sequence is governed by the error propagation operator $\mathcal{E} = \mathrm{Id}_{\mathcal Y} - \mathcal{B}\mathcal{A}$, where $\mathrm{Id}_{\mathcal{Y}}$ denotes the identity operator on the solution space. Specifically, the error $\boldsymbol{e}^{(n)} = \boldsymbol{u} - \boldsymbol{u}^{(n)}$ evolves according to $\boldsymbol{e}^{(n+1)} = \mathcal{E}\boldsymbol{e}^{(n)}$. For bounded linear operators on the Sobolev space $\mathcal{Y}$, a necessary and sufficient condition for the guaranteed convergence to the exact solution for any initial guess is that the spectral radius satisfies $\rho(\mathcal{E}) < 1$ \cite{borwein2017convergence}. The efficacy of such solvers depends on two functional stages: a high-quality initialization to minimize the starting residual, and a robust iterative refinement to ensure asymptotic stability and rapid contraction.

Formally, let $\mathcal{G}: \mathcal{X} \to \mathcal{Y}$ be the target solution mapping such that $\mathcal{A}\mathcal{G}(\boldsymbol{f}) = \boldsymbol{f}$. Within the proposed framework, $\mathcal{G}$ is decomposed into two fundamental components: a starter $\mathcal{S}$ and an infinite iterator $\mathcal{I}^{\infty}$. The starter $\mathcal{S}$ provides the initial approximation: 
\begin{equation*}
\mathcal{S}: \mathcal{X} \to \mathcal{Y}, \quad \boldsymbol{u}^{(0)} = \mathcal{S}(\boldsymbol{f}).
\end{equation*}
The single-step iterator $\mathcal{I}$ characterizes the refinement mechanism, mapping the current state and the data to the subsequent approximation:
\begin{equation}\label{eq:iter_seq}
\mathcal{I}: \mathcal{Y} \times \mathcal{X} \to \mathcal{Y}, \quad \mathcal{I}(\boldsymbol{u}, \boldsymbol{f}) := \boldsymbol{u} + \mathcal{B} \left( \boldsymbol{f} - \mathcal{A}\boldsymbol{u} \right).
\end{equation}
Furthermore, we define the infinite iteration operator $\mathcal{I}^{\infty}: \mathcal{Y} \times \mathcal{X} \to \mathcal{Y}$ as the limit mapping of the dynamical process:
\begin{equation*}
\mathcal{I}^{\infty}(\boldsymbol{u}^{(0)}, \boldsymbol{f}) := \lim_{n \to \infty} \mathcal{I}^n(\boldsymbol{u}^{(0)}, \boldsymbol{f}),
\end{equation*}
where $\mathcal{I}^n$ denotes $n$ successive applications of the iteration rule. To establish a rigorous operator-level composition, we introduce the augmented starter $(\mathcal{S}, \mathrm{Id}_{\mathcal{X}}): \mathcal{X} \to \mathcal{Y} \times \mathcal{X}$, which maps the input $\boldsymbol{f}$ to the pair $(\mathcal{S}(\boldsymbol{f}), \boldsymbol{f})$. Consequently, the global solution operator $\mathcal{G}$ is structurally characterized as the algebraic composition:
\begin{equation}
\mathcal{G} = \mathcal{I}^\infty \circ (\mathcal{S}, \mathrm{Id}_{\mathcal{X}}).
\end{equation}
This decomposition explicitly decouples the two core tenets of numerical stability: the approximation capability of the starter and the contractive refinement of the iterator.

\subsection{Starter-Iterator Neural Operator}\label{math_form}
We introduce the Starter-Iterator Neural Operator (SINO), a framework designed to bridge classical iterative analysis with operator learning. The SINO architecture, denoted by $\mathcal{G}_{\theta}$, is partitioned into a starter operator $\mathcal{S}_\theta$ and an unrolled iterator operator $\mathcal{I}^{N}_\theta$, mimicking the initialization and iterative refinement stages of numerical solvers, respectively.

\textbf{Starter operator.}
In numerical analysis, an initial guess containing the dominant low-frequency components of the target solution significantly accelerates the convergence of fixed-point iterations. Inspired by spectral methods, $\mathcal{S}_{\theta}$ is designed as a learnable spectral filtering layer to extract principal smooth modes.  Considering a periodic domain $\Omega=\mathbb{T}^d$, the functions $\boldsymbol{f}$ and $\boldsymbol{u}$ are characterized by their Fourier series expansions. Let $\boldsymbol{f}\in H^s(\Omega;\mathbb{R}^{d_{\boldsymbol{f}}})$ denote the input data and $\boldsymbol{u} \in L^2(\Omega;\mathbb{R}^{d_{\boldsymbol{u}}})$ represent the target features. Utilizing the orthonormal Fourier basis $\{e^{i\langle k,x\rangle}\}_{k\in\mathbb{Z}^d}$, the functions $\boldsymbol{f}$ and $\boldsymbol{u}$ admit the following spectral representations:
\[
\boldsymbol{f}(x)
=
\sum_{k\in\mathbb{Z}^d}
\widehat{\boldsymbol{f}}(k)
e^{i\langle k,x\rangle},  \quad   \boldsymbol{u}(x)
    =
    \sum_{k\in\mathbb{Z}^d}
    \widehat{\boldsymbol{u}}(k)
    e^{i\langle k,x\rangle},
\]
where $\widehat{\boldsymbol{f}}(k)$ and $\widehat{\boldsymbol{u}}(k)$ are the Fourier coefficients. For a prescribed truncation level $K>0$, we define the Fourier projection operator $\mathcal{P}_K$, which filters out high-frequency modes beyond the specified threshold:
\[
\mathcal{P}_K:  L^2(\Omega)\mapsto L^2(\Omega), \quad \mathcal{P}_K(\boldsymbol{f})
=
\sum_{|k|_\infty<K}
\widehat{\boldsymbol{f}}(k)
e^{i\langle k,x\rangle}.
\]
We then introduce a learnable starter operator $\mathcal{S}_{\theta}$ to effectively capture the dominant low-frequency components of the target $\boldsymbol{u}$. This operator is designed to generate an initial approximation by modulating the projected spectral modes:
\begin{equation}\label{eq:starter}
\mathcal{S}_\theta: H^s(\Omega)\rightarrow L^2(\Omega),  \quad\mathcal{S}_\theta(\boldsymbol{f}):=\sigma
\Big(
W_{\theta}\boldsymbol{f}
+
\mathcal{F}^{-1}
\big(
R_\theta(k)\,
\mathcal{F}(\mathcal{P}_K\boldsymbol{f})(k)
\big)
\Big),
\end{equation}
where $\mathcal{F}$ and $\mathcal{F}^{-1}$ denote the Fourier transform pair, $R_\theta(k)$ are learnable spectral weights, $W_{\theta}$ is a pointwise linear transformation, and $\sigma$ is a nonlinear activation function.

\textbf{Iterator operator.}
Once the starter operator $\mathcal{S}_{\theta}$ yields the initial approximation $\boldsymbol{u}^{(0)}$, the solution is refined through an iterator operator inspired by classical fixed-point schemes. We construct a recursive sequence:
\[
\boldsymbol{u}^{(n+1)}
=
\boldsymbol{u}^{(n)}
+
\Delta\boldsymbol{u}^{(n)}, \quad n=0, 1, \cdots, N-1
\]
where the correction term is defined as the preconditioned residual:
\[
\Delta\boldsymbol{u}^{(n)}
=
\mathcal{B}_\theta^{(n)}
\big(
\boldsymbol{f}
-
\mathcal{A}_\theta^{(n)}
\boldsymbol{u}^{(n)}
\big),
\]
and $\mathcal{A}_\theta^{(n)}: L^2(\Omega) \mapsto H^s(\Omega)$ and $\mathcal{B}_\theta^{(n)}:H^s(\Omega) \mapsto L^2(\Omega)$ are parameterized neural operators approximating the forward operator and the preconditioning operator, respectively. This update is reformulated as an affine mapping $\mathcal{I}^{(n)}_\theta: L^2(\Omega) \times H^s(\Omega)\rightarrow L^2(\Omega)$:
\[
\boldsymbol{u}^{(n+1)}
:= \mathcal{I}^{(n)}_{\theta}(\boldsymbol{u}^{(n)}, \boldsymbol{f}) = 
\mathcal{E}_\theta^{(n)}\boldsymbol{u}^{(n)}
+
\mathcal{B}_\theta^{(n)}\boldsymbol{f}.
\]
where $\mathcal{E}_\theta^{(n)} = \mathrm{Id}_{\mathcal Y} -
\mathcal{B}_\theta^{(n)} \mathcal{A}_\theta^{(n)}$ is the error propagation operator. By unrolling this recursion for $N$ steps, the aggregate iterator $\mathcal{I}^N_\theta: L^2(\Omega) \times H^s(\Omega)\rightarrow L^2(\Omega)$ is defined as:
\begin{equation}\label{eq:iterator}
\boldsymbol{u}^{N} = \mathcal{I}^N_\theta(\boldsymbol{u}^{(0)},\boldsymbol{f})
:=
\mathcal{E}_\theta^{N}\boldsymbol{u}^{(0)}
+
\sum_{i=0}^{N-1}
\mathcal{E}_\theta^{i+1}
\mathcal{B}_\theta^{(i)}
\boldsymbol{f}.
\end{equation}
where
\[
\mathcal{E}_\theta^{N}
=
{\mathcal{E}}_\theta^{(N-1)}
\circ
{\mathcal{E}}_\theta^{(N-2)}
\circ
\cdots
\circ
{\mathcal{E}}_\theta^{(0)}.
\]
This architecture represents a deeply unfolded iterative solver, mapping the spectral initialization to a high-fidelity solution through structured updates.

Finally, the complete SINO is defined by the composition of the starter and iterator:
\begin{equation}\label{sino_form}
\mathcal{G}_\theta = \mathcal{I}^N_\theta \circ (\mathcal{S}_\theta, \mathrm{Id}),
\end{equation}
where $\mathrm{Id}$ represents the identity operator on the Sobolev space.
By integrating spectral initialization with iterative refinement, SINO leverages both global frequency-domain priors and local residual-based corrections. The universal approximation property of this framework is established in Theorem \ref{thm:fsno_approx}.


{
\begin{theorem}[Universal approximation property of SINO]
\label{thm:fsno_approx}
Let $s \geq 0$ and $\mathcal{C} \subset H^s(\mathbb{T}^d; \mathbb{R}^{d_{\boldsymbol{f}}})$ be a compact subset of input functions. Let
$\mathcal G:
    H^s(\mathbb T^d;\mathbb R^{d_{\boldsymbol f}})
    \rightarrow
    L^2(\mathbb T^d;\mathbb R^{d_{\boldsymbol u}})$
be a continuous solution operator associated with the operator equation 
~\eqref{eq:linear}. Assume that there exist two bounded linear
operators
\[
    \mathcal A:
    L^2(\mathbb T^d;\mathbb R^{d_{\boldsymbol u}})
    \rightarrow
    H^s(\mathbb T^d;\mathbb R^{d_{\boldsymbol f}}),
    \qquad
    \mathcal B:
    H^s(\mathbb T^d;\mathbb R^{d_{\boldsymbol f}})
    \rightarrow
    L^2(\mathbb T^d;\mathbb R^{d_{\boldsymbol u}}),
\]
such that $\mathcal{A}$ satisfy the operator equation ~\eqref{eq:linear}, and the corresponding preconditioned residual map satisfies
\[
    \bigl\|
        \mathrm{Id}_{\mathcal Y}-\mathcal B\mathcal A
    \bigr\|
    < 1,
\]
where $\mathrm{Id}_{\mathcal Y}$ denotes the identity operator on
$L^2(\mathbb T^d;\mathbb R^{d_{\boldsymbol u}})$. Then, for any $\varepsilon > 0$, there exists a Starter--Iterator Neural Operator (SINO) $\mathcal{G}_\theta$ of the form \eqref{sino_form}, such that $\mathcal G_\theta:
    H^s(\mathbb T^d;\mathbb R^{d_{\boldsymbol f}})
    \rightarrow
    L^2(\mathbb T^d;\mathbb R^{d_{\boldsymbol u}})$ is continuous and satisfies:
\[
    \sup_{\boldsymbol f\in\mathcal C}
    \left\|
        \mathcal G(\boldsymbol f)
        -
        \mathcal G_\theta(\boldsymbol f)
    \right\|_{L^2}
    <
    \varepsilon .
\]
\end{theorem}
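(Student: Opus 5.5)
The plan is to split the error into a starter part and an iterator part and control both through the contraction $q:=\|\mathrm{Id}_{\mathcal Y}-\mathcal B\mathcal A\|<1$. Set $\mathcal E:=\mathrm{Id}_{\mathcal Y}-\mathcal B\mathcal A$. Since $\mathcal A\mathcal G(\boldsymbol f)=\boldsymbol f$, the exact solution is a fixed point of the exact iteration: $\mathcal G(\boldsymbol f)=\mathcal E\mathcal G(\boldsymbol f)+\mathcal B\boldsymbol f$. I will take the learned operators in the iterator to be the exact ones, $\mathcal A_\theta^{(n)}=\mathcal A$ and $\mathcal B_\theta^{(n)}=\mathcal B$ for every $n$. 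This is admissible because the statement only requires $\mathcal G_\theta$ to have the form \eqref{sino_form}, and the iterator class is parameterized by bounded linear operators of exactly these types. If one insists on a restricted neural parameterization of $\mathcal A_\theta,\mathcal B_\theta$, I would handle it by a perturbation step (described at the end).

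With this choice, \eqref{eq:iterator} gives the error identity
\[
\mathcal G(\boldsymbol f)-\mathcal I^N_\theta(\boldsymbol u^{(0)},\boldsymbol f)=\mathcal E^N\bigl(\mathcal G(\boldsymbol f)-\boldsymbol u^{(0)}\bigr),
\]
so that
\[
\|\mathcal G(\boldsymbol f)-\mathcal G_\theta(\boldsymbol f)\|_{L^2}\le q^N\,\|\mathcal G(\boldsymbol f)-\mathcal S_\theta(\boldsymbol f)\|_{L^2}.
\]
The proof is then reduced to showing that some starter $\mathcal S_\theta$ of the form \eqref{eq:starter} is continuous and has bounded error on $\mathcal C$. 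Accuracy of the starter is not needed, only boundedness.

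\textbf{Continuity and boundedness of the starter.} Each ingredient of \eqref{eq:starter} is continuous from $H^s$ to $L^2$: $W_\theta$ is pointwise linear, $\mathcal P_K$ has finite rank, the Fourier multiplier $R_\theta$ acts on finitely many modes, and $\sigma$ is Lipschitz (ReLU, or GELU with its globally bounded derivative), so the Nemytskii map is Lipschitz on $L^2$. Because $\mathcal C$ is compact and $\mathcal G,\mathcal S_\theta$ are continuous, the constant
\[
M:=\sup_{\boldsymbol f\in\mathcal C}\|\mathcal G(\boldsymbol f)-\mathcal S_\theta(\boldsymbol f)\|_{L^2}
\]
is finite. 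The trivial choice $W_\theta=0$, $R_\theta=0$ already works, since then $\mathcal S_\theta$ is a constant map. Choosing $N$ with $q^NM<\varepsilon$ concludes the argument. Continuity of $\mathcal G_\theta$ follows because it is a composition of continuous maps: $\mathcal I^N_\theta$ is affine and bounded in $(\boldsymbol u^{(0)},\boldsymbol f)$.

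\textbf{Possible refinement and main obstacle.} To reflect the role of the starter, I would also show that $M$ can be made small by taking $K$ large. Let $\mathcal G_K=\mathcal P_K\mathcal G$. The error $\sup_{\mathcal C}\|(\mathrm{Id}-\mathcal P_K)\mathcal G(\boldsymbol f)\|\to0$ uniformly, by compactness of $\mathcal G(\mathcal C)$ together with the standard equi-tail criterion. The finite-dimensional map $\boldsymbol f\mapsto\widehat{\mathcal G(\boldsymbol f)}(k)$ for $|k|_\infty<K$ can then be approximated by FNO-type layers, but this requires more than the single layer in \eqref{eq:starter}. For that reason I would not rely on it, and would regard it only as the source of faster convergence.

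The genuine obstacle arises only if $\mathcal A_\theta^{(n)},\mathcal B_\theta^{(n)}$ must be drawn from a restricted neural class. In that case I would write
\[
\mathcal E_\theta^{(n)}=\mathcal E+\Delta_n,\qquad \mathcal B_\theta^{(n)}=\mathcal B+\delta_n,
\]
and choose the approximations with $\|\Delta_n\|\le\eta$, where $q+\eta<1$, so that the products $\mathcal E_\theta^N$ stay contractive. A telescoping estimate then bounds the accumulated deviation from the exact iterates by $C(\eta+\max\|\delta_n\|)/(1-q-\eta)$ uniformly on $\mathcal C$. The approximations are needed only on the compact images $\{\mathcal I^n(\cdot)\}$, where strong (pointwise-uniform) approximation of bounded operators suffices.
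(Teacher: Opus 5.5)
Your argument is correct, provided the iterator may use $\mathcal{A}_\theta^{(n)}=\mathcal{A}$ and $\mathcal{B}_\theta^{(n)}=\mathcal{B}$. The paper never fixes the neural class, and its own Lemma~\ref{lem:univ_iter} presupposes approximants arbitrarily close to $\mathcal{A},\mathcal{B}$ in operator norm, so your choice is just the $\delta=0$ case. It is, however, a genuinely different and leaner route.

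The paper splits the error into three terms:
\begin{itemize}
\item the exact iteration started from $\mathcal{G}_K\boldsymbol f=\mathcal{P}_K\mathcal{G}(\mathcal{P}_K\boldsymbol f)$ (Lemma~\ref{lem:freq_trunc_init_appro});
\item exact versus neural iterator from that start, using $\|\mathcal{A}-\mathcal{A}_\theta^{(n)}\|,\|\mathcal{B}-\mathcal{B}_\theta^{(n)}\|\le\delta$ to keep $\|\mathrm{Id}_{\mathcal Y}-\mathcal{B}_\theta^{(n)}\mathcal{A}_\theta^{(n)}\|\le q_\theta<1$ (Lemma~\ref{lem:univ_iter});
\item the starter error $\|\mathcal{G}_K\boldsymbol f-\mathcal{S}_\theta(\boldsymbol f)\|$, made small by FNO universality (Lemma~\ref{lem:fno_universal}) and pushed through the contractive neural iterator.
\end{itemize}
You observe instead that once the iterator contracts, the starter needs only bounded error on the compact set $\mathcal{C}$, so $q^N M<\varepsilon$ finishes even with the zero starter.

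This buys you three things.
\begin{itemize}
\item Uniformity over $\mathcal{C}$ is explicit. The paper's Lemma~\ref{lem:freq_trunc_init_appro} chooses $N$ for one $\boldsymbol f$ and tacitly needs $\sup_{\mathcal C}\|\mathcal{G}(\boldsymbol f)-\mathcal{G}_K(\boldsymbol f)\|<\infty$.
\item You bypass the paper's two weakest steps: applying \cite{kovachki2021universal} to the single spectral layer \eqref{eq:starter}, which that result does not cover, and extracting operator-norm approximations of $\mathcal{A},\mathcal{B}$ from a compact-set universal approximation theorem.
\item You show that, under the stated hypotheses, the starter affects only the constant $M$ (how large $N$ must be), not whether approximation holds.
\end{itemize}
The paper's decomposition aims instead at keeping $\mathcal{A}_\theta^{(n)},\mathcal{B}_\theta^{(n)}$ genuinely neural and at crediting the spectral initialization with a small initial error.

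Two corrections. First, derive your error identity from the recursion $\boldsymbol u^{(n+1)}=\mathcal{E}\boldsymbol u^{(n)}+\mathcal{B}\boldsymbol f$, not from the closed form \eqref{eq:iterator}. As printed, in the stationary case that form reads $\mathcal{E}^N\boldsymbol u^{(0)}+\sum_{i=1}^{N}\mathcal{E}^{i}\mathcal{B}\boldsymbol f$ instead of $\mathcal{E}^N\boldsymbol u^{(0)}+\sum_{i=0}^{N-1}\mathcal{E}^{i}\mathcal{B}\boldsymbol f$, and your identity fails for it.

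Second, your fallback for a restricted class is essentially the paper's Lemma~\ref{lem:univ_iter} and inherits its flaw. The requirement $\|\Delta_n\|\le\eta$ with $q+\eta<1$ cannot be met whenever $\mathcal{B}_\theta^{(n)}\mathcal{A}_\theta^{(n)}$ is compact (e.g.\ finite rank). Indeed, $\|\mathcal{B}\mathcal{A}v\|\ge(1-q)\|v\|$, so testing on an orthonormal sequence gives $\|\mathcal{E}-\mathcal{E}_\theta^{(n)}\|\ge 1-q$. The requirement is also unnecessary. With $N$ fixed before the networks are chosen, it suffices that:
\begin{itemize}
\item $\mathcal{B}_\theta^{(n)}$ and $\mathcal{E}_\theta^{(n)}$ have bounded norms (or Lipschitz constants);
\item $\mathcal{A}_\theta^{(n)}\approx\mathcal{A}$ uniformly on the compact set of exact iterates;
\item $\mathcal{B}_\theta^{(n)}\approx\mathcal{B}$ uniformly on the compact set of exact residuals.
\end{itemize}
This is exactly the strong-approximation point in your last sentence.
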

}


\begin{figure}[t]
        \centering
        \includegraphics[width=0.8\linewidth]{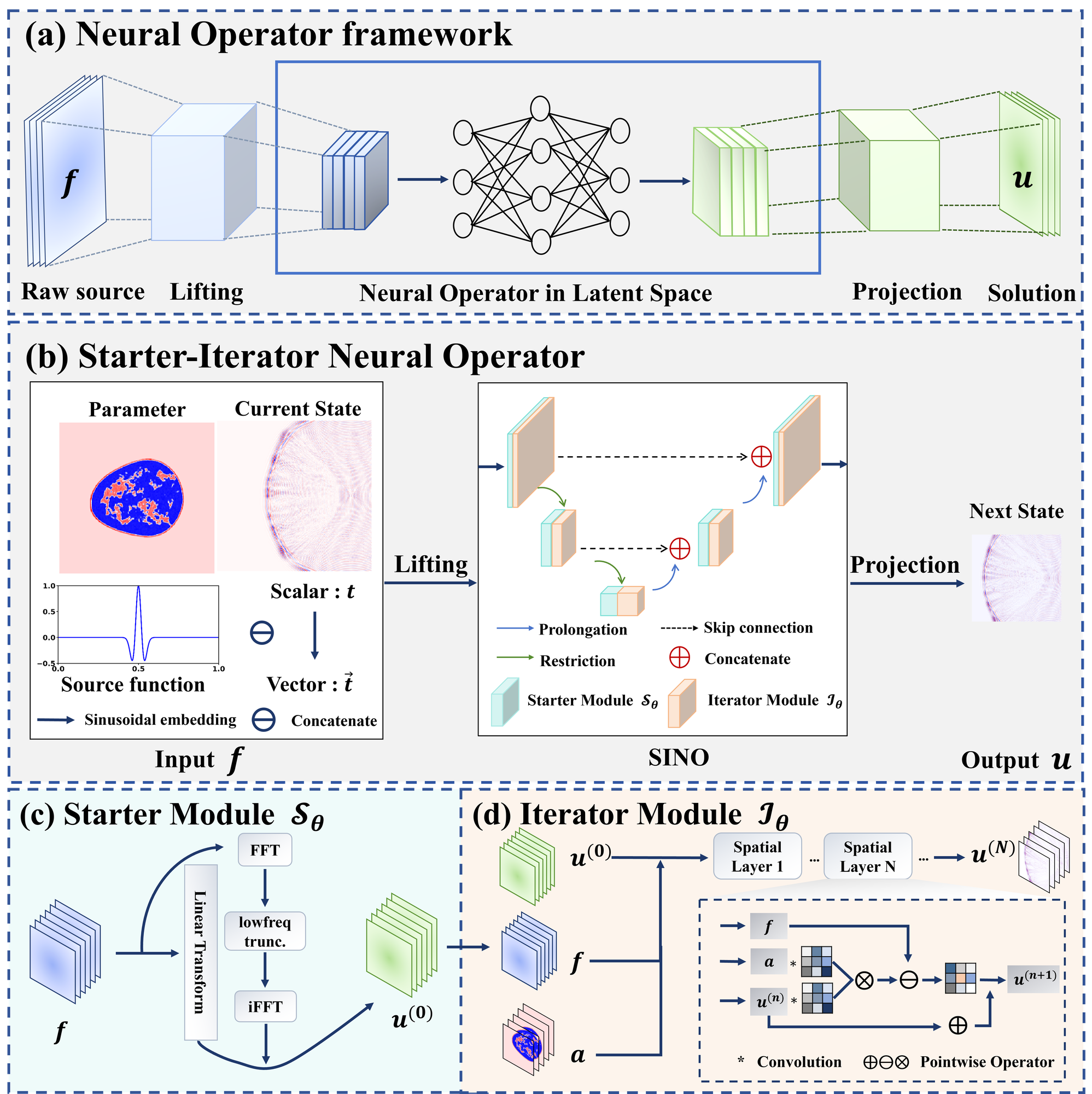}\\
        \vspace{0.3cm}
        \caption{\textbf{Illustration of the proposed starter-iterator neural operator.}
        (\textbf{a}) General framework of operator learning, where the input in space is lifted into latent source space $\mathcal{X}$ , processed by the neural operator to get the target latent space $\mathcal{Y}$, and projected to true solution space as the output. 
        (\textbf{b}) Framework of our proposed Starter-Iterator Neural Operator (SINO), which incorporates both Starter and Iterator modules, together with lifting, projection, restriction, and prolongation operators. 
        (\textbf{c}) The starter operator $\mathcal{S}_{\theta}$ applies Fourier transform, low-frequency truncation, and inverse transform to capture global patterns. 
        (\textbf{d}) The iterator operator $\mathcal{I}_{\theta}$ enhances local structures via spatial filtering layers and convolutional operators.}
        \label{fig1:neuraloperator}
\end{figure}

\subsection{Network Architecture}
Guided by the theoretical framework established in Section~\ref{math_form}, we implement the Starter-Iterator Neural Operator (SINO) through a hierarchical multiscale architecture—as illustrated in Figure~\ref{fig1:neuraloperator}(b) and Figure~\ref{fig1:neuraloperator}(c)—that exploits the duality between spectral global features and spatial local corrections at each discretization scale. Specifically, the network integrates a starter module for spectral initialization, which utilizes learnable Fourier filtering to capture low-rank, long-range dependencies and dominant global dynamics as a high-fidelity initial approximation $\boldsymbol{u}^{(0)}$, and a complementary iterator module for spatial refinement, which employs localized convolutional kernels to approximate the preconditioned residual mapping and capture fine-grained, high-frequency interactions. This hybrid multiresolution structure ensures that the model preserves asymptotic consistency with classical numerical solvers while leveraging the universal approximation power of neural operators.

\textbf{Time variable embedding.}
To handle time-evolutionary data, we incorporate temporal information through a high-dimensional embedding of the time variable $t > 0$. Following the practice in diffusion models \cite{ho2020denoising}, we employ a sinusoidal positional encoding \cite{vaswani2017attention} with log-spaced frequencies:
\[
\boldsymbol{\gamma}(t)=\big[\sin(\omega_0 t),\cos(\omega_0 t),\ldots,\sin(\omega_{m-1} t),\cos(\omega_{m-1} t)\big],\quad \omega_m=\omega_{0}\,2^m,
\]
which is then projected by an MLP implemented as linear layer \cite{stevens2020deep} with ReLU activations to obtain a high-dimensional tensor $\boldsymbol{\gamma}_t \in \mathbb{R}^{C_t}$. For any state tensor $\boldsymbol{u}_t\in\mathbb{R}^{B\times C_t\times H\times W}$ at the current time step $t$, we concatenate it directly with the time embedding tensor $\boldsymbol{\gamma}_t$. We then perform a channel-wise concatenation to form the augmented input $\boldsymbol{f} = [\boldsymbol{u}_t; \boldsymbol{\gamma}_t]$, which encapsulates both the instantaneous state and the temporal context.

\textbf{Latent space embedding.} As illustrated in Figure~\ref{fig1:neuraloperator}(a), the architecture adheres to the lifting-operator-projection paradigm characteristic of neural operators \cite{kovachki2023neural}. The input data $\boldsymbol{f}$ is first lifted into a high-dimensional latent observable space $\mathcal{X}$ via a pointwise transformation $\mathcal{L}_{\theta}$. Within this continuous latent space, the SINO framework $\mathcal{I}^N_\theta \circ \mathcal{S}_\theta$ evolves the representations toward the target space $\mathcal{Y}$, followed by a projection module $\mathcal{Q}_{\theta}$ that maps the features back to the physical solution space. Both $\mathcal{L}_{\theta}$ and $\mathcal{Q}_{\theta}$ are implemented as shallow MLPs with ReLU activations, a design that enhances representational capacity while preserving the discretization-invariant properties of the underlying operator.

\textbf{Hierarchical Multiscale Framework.}
Inspired by multigrid correction schemes \cite{xu1992iterative}, we adopt a hierarchical multiscale framework to capture the duality between global dynamics and fine-scale structures. At each level $s \in \{0, \dots, S-1\}$, let $\boldsymbol{f}_s$ denote the data (or residual) at that resolution. A coarse approximation is generated and subsequently refined via the Starter-Iterator sequence:
\begin{equation}
\boldsymbol{u}^{(0)}_{s} = \mathcal{S}_\theta \big(\boldsymbol{f}_{s}\big), \quad \boldsymbol{u}^{(N)}_{s} = \mathcal{I}_\theta^N \big(\boldsymbol{u}^{(0)}_{s}, \boldsymbol{f}_{s}\big),
\label{eq:dual-domain}
\end{equation}
where $\mathcal{S}_\theta$ and $\mathcal{I}_\theta$ act as the pre-smoothers of the multiscale system. In the downward pass, the residual $\boldsymbol{r}_{s} = \boldsymbol{f}_{s} - \mathcal{A}_{s} \boldsymbol{u}^{(N)}_{s}$ is transferred to the next coarser level via a restriction operator $\mathcal{R}_s$ implemented as a strided convolution: $\boldsymbol{f}_{s+1} = \mathcal{R}_s \boldsymbol{r}_{s}$.

At the coarsest level $S$, the residual equation is solved approximately. During the subsequent upward pass, the coarse-scale corrections are mapped back to finer resolutions via a prolongation operator $\mathcal{T}_s$ implemented as a transposed convolution and accumulated into the finer-level state:
\begin{equation*}
\boldsymbol{u}^{(N)}_{s-1} \leftarrow \boldsymbol{u}^{(N)}_{s-1} + \mathcal{T}_s \boldsymbol{u}^{(N)}_{s}, \quad s = S-1, S-2, \dots, 1.
\end{equation*}
This V-cycle topology allows coarse-scale layers to efficiently capture long-range, low-frequency interactions with reduced parameter counts, while fine-scale layers progressively allocate more computational resources to high-frequency details by increasing the number of retained Fourier modes $|\mathcal{K}_s|$.

The stability of information flow is further ensured through cross-scale skip connections, accelerating convergence as summarized in Algorithm \ref{alg:multiscale}. As shown in Figure~\ref{fig:extend_figure2}, experimental results on the Darcy flow benchmark \cite{li2020fourier} demonstrate that the starter-based initialization significantly outperforms conventional random or zero-start strategies, and the integration of the multiscale structure consistently enhances the accuracy and robustness of the operator approximation.

\begin{algorithm}[htbp]
\caption{Data flow in Starter-Iterator Neural Operator}
\label{alg:multiscale}
\begin{algorithmic}[1]
\Require Input finest force $\boldsymbol{f}_{0}$, number of scales $S$,
\For{$s = 0 \to S-1$} \Comment{Downward pass: restriction and coarse solve}
    \State Compute linear system: 
    \begin{equation*}
        \boldsymbol{u}^{(0)}_{s} = \mathcal{S}_\theta \big(\boldsymbol{f}_{s}\big), \qquad
        \boldsymbol{u}^{(N)}_{s}  = \mathcal{I}_\theta^N\big(\boldsymbol{u}^{(0)}_{s}, \boldsymbol{f}_{s}\big),
    \end{equation*}
    \State Compute residual: $\boldsymbol{r}_{s} = \boldsymbol{f}_{s} - A_{s} \boldsymbol{u}^{(N)}_{s}$,
    \If{$s < S-1$}
        \State Restrict residual to coarse grid: $\boldsymbol{f}_{s+1} = \mathcal{R}_s(\boldsymbol{r}_{s})$,
        \State Repeat: \begin{equation*}
        \boldsymbol{u}^{(0)}_{s+1} = \mathcal{S}_\theta \big(\boldsymbol{f}_{s+1}\big), \qquad
        \boldsymbol{u}^{(N)}_{s+1}  = \mathcal{I}_\theta^N\big(\boldsymbol{u}^{(0)}_{s+1}, \boldsymbol{f}_{s+1}\big),
    \end{equation*}
    \EndIf
\EndFor
\For{$s = S-1 \to 0$} \Comment{Upward pass: prolongation and correction}
    \State Prolongate coarse solution: $\bar{\boldsymbol{u}}^{(N)}_{s} = \mathcal{T}_{s+1}(\boldsymbol{u}^{(N)}_{s+1})$,
    \State Correct fine-level solution: $\boldsymbol{u}^{(N)}_{s} \gets \boldsymbol{u}^{(N)}_{s} + \bar{\boldsymbol{u}}^{(N)}_{s}$,
\EndFor
\State \Return $\boldsymbol{u}^{(N)}_{0}$ as the refined fine-scale solution $\boldsymbol{u}$.
\end{algorithmic}
\end{algorithm}

\section{Numerical Results}\label{sec3}

{
\subsection{Network Training and Loss Function}
The proposed SINO framework is implemented using PyTorch and optimized via the Adam algorithm, chosen for its robust stochastic gradient-based convergence properties. To accelerate training and navigate complex loss landscapes, we employ the OneCycleLR scheduler, which implements a cyclical learning rate policy. Training is executed on high-performance NVIDIA RTX 4090 and Titan RTX GPUs, typically with 500 training epochs to ensure asymptotic stability of the learnable parameters $\theta$.

For time-evolutionary equations, we consider a temporal discretization of the domain $[0, T_{max}]$ into $T$ uniform intervals, yielding a discrete state sequence $\{\boldsymbol{u}_t\}_{t=0}^T$ for each trajectory, where $t$ denotes the temporal index. The training dataset $\mathcal{D}$ comprises $M$ independent physical realizations (trajectories). To learn the one-step solution operator, we consider the mapping between consecutive temporal states, effectively constructing a total of $N = M \times T$ training pairs by pooling samples across all trajectories and time steps. Let $\{ \widehat{\boldsymbol{u}}_t^{(i)}, \widehat{\boldsymbol{u}}_{t+1}^{(i)} \}_{t=0}^{T-1}$ denote the input-target pairs for the $i$-th trajectory, where $\widehat{\boldsymbol{u}}_t^{(i)}$ is the state at the current time level and $\widehat{\boldsymbol{u}}_{t+1}^{(i)}$ serves as the ground truth for the subsequent level. The optimal parameters $\theta$ are determined by minimizing the aggregate relative mean squared error (RMSE) defined in the $L^2(\Omega)$ sense:
\begin{equation}
\min_{\theta} \mathcal{L}(\theta) := \min_{\theta} \frac{1}{MT} \sum_{t=0}^{T-1} \sum_{i=1}^{M} \frac{\left\| \mathcal{G}_\theta(\widehat{\boldsymbol{u}}_t^i, t) - \widehat{\boldsymbol{u}}_{t+1}^i \right\|_{L^2} }{\|\widehat{\boldsymbol{u}}_{t+1}^i \|_{L^2}}.
\label{eq:lossfun1}
\end{equation}
For stationary PDE systems or super-resolution datasets, we consider a training set $\mathcal{D} = \{ \widehat{\boldsymbol{f}}^{(i)}, \widehat{\boldsymbol{u}}^{(i)} \}_{i=1}^M$ consisting of $M$ independent data pairs. And $\widehat{\boldsymbol{f}}^{(i)}$ typically denotes the discretized forcing term or boundary conditions, while $\widehat{\boldsymbol{u}}^{(i)}$ is the latent solution field. The optimal parameters $\theta$ are estimated by minimizing the average relative mean squared error (RMSE) in the $L^2(\Omega)$ sense:
\begin{equation*}
\min_{\theta} \mathcal{L}(\theta) := \min_{\theta} \frac{1}{M} \sum_{i=1}^{M} \frac{\left\| \mathcal{G}_\theta(\widehat{\boldsymbol{f}}^i) -  \widehat{\boldsymbol{u}}^i \right\|_{L^2} }{\|\widehat{\boldsymbol{u}}^i \|_{L^2}}.
\label{eq:lossfun2}
\end{equation*}

\subsection{Model Evaluation}\label{sec:model_eva}
\paragraph{\textbf{Zero-shot resolution generalization}}
We evaluate the discretization invariance of SINO on the one-dimensional Burgers
equation using the standard FNO~\cite{li2020fourier} benchmark setting. The task is to learn the operator mapping the initial condition $\boldsymbol{u}_0$ to the solution $\boldsymbol{u}_1$. We train the FNO and SINO with various iterator steps $I$ at resolution $R=256$ and directly tested at resolutions
$R\in\{256, 512, 1024, 2048, 4096, 8192\}$ without retraining. As shown in Figure~\ref{fig:zero_shot_resolution}, SINO generalizes stably across resolutions. Although the model is trained only at $R=256$, its relative error remains nearly stable when evaluated on grids up to $R=8192$. This
confirms that the proposed architecture is not tied to the training discretization. Compared with FNO, SINO achieves lower error at all tested resolutions. The improvement becomes more pronounced as the number of iterator
steps increases. In particular, SINO with iterator steps $I=32$ achieves the lowest error and outperforms the FNO baseline by approximately one order of magnitude.

\begin{figure}
    \centering
    \includegraphics[width=0.65\linewidth]{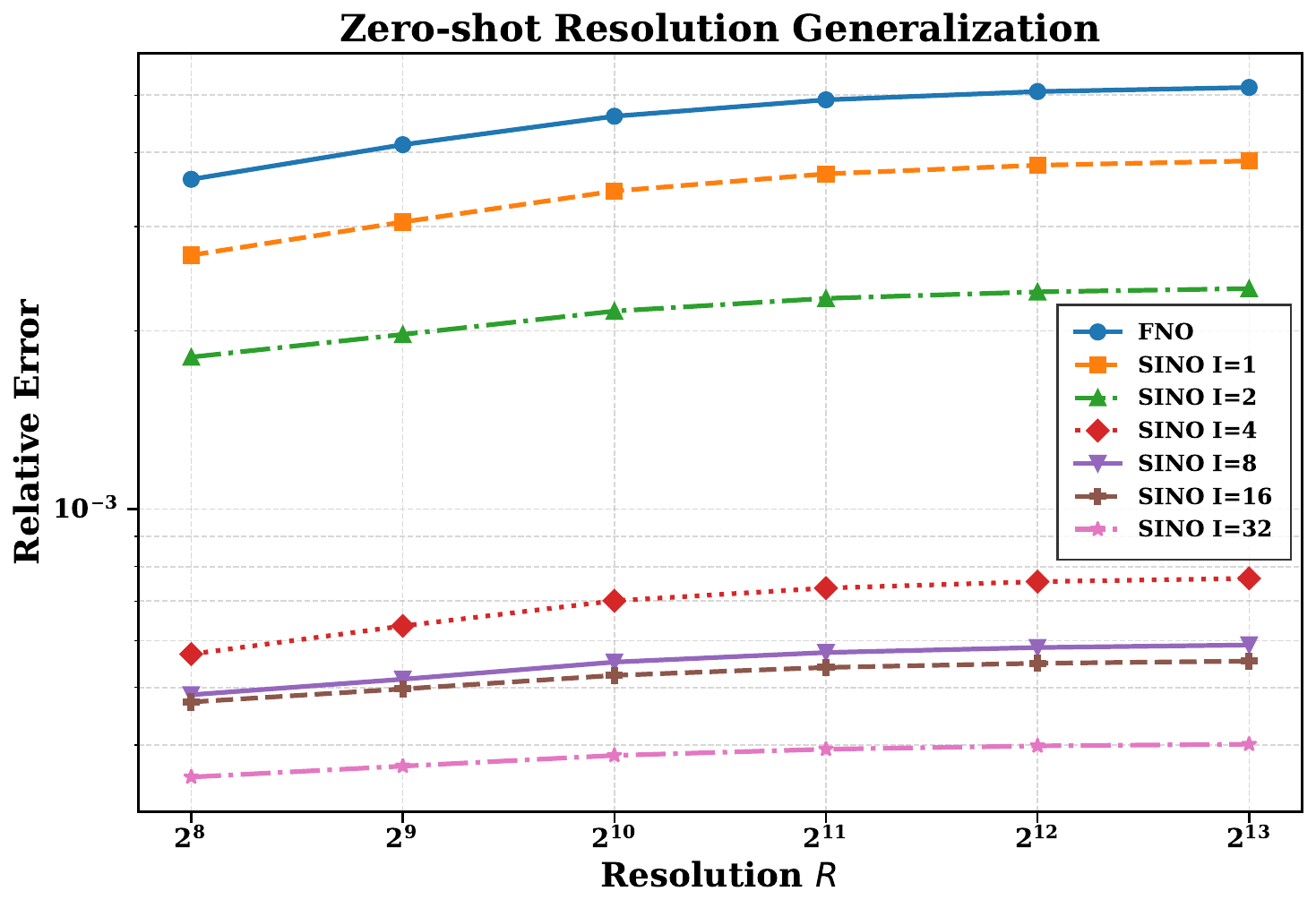}
    \caption{Relative $L^2$ error comparison for zero-shot resolution generalization on the 1D Burgers equation. We trained the FNO and SINO with different iterator steps  $I$ only on low-resolution data and directly evaluated on higher-resolution test sets without any retraining or fine-tuning.}
    \label{fig:zero_shot_resolution}
\end{figure}

A key observation is that increasing the number of iterator steps does not
compromise resolution generalization. The error curves for SINO with larger
iterative numbers remain stable across test resolutions, indicating that the learned
refinement behaves consistently on different grids. This supports our
function-space interpretation of SINO: the starter and iterator approximate
operators between function spaces, while discretization is used only for
numerical realization.

\paragraph{\textbf{Hyperparameter selection}}
We further investigate the influence of architectural hyperparameters on the
Darcy-flow benchmark, following the standard FNO~\cite{li2020fourier} experimental setting. In practical numerical PDE solvers, the operators $\mathcal A$ and $\mathcal B$ typically arise from discretizations of local differential or localized integral operators. These operators
naturally possess spatial locality, translation equivariance, and sparse
interaction patterns. Convolutional neural networks are well aligned with these properties, since they parameterize local stencils with lightweight convolutional
kernels, thereby providing an efficient representation of local correction
operators. For this reason, in the following experiments we use CNN blocks to implement the parameterized iterator in SINO, while the starter is kept as a Fourier-type spectral operator, implemented by an FNO-type spectral block~\cite{li2020fourier}.

\begin{figure}[t]
    \centering
    \begin{subfigure}[t]{0.45\linewidth}
        \centering
        \includegraphics[width=\linewidth]{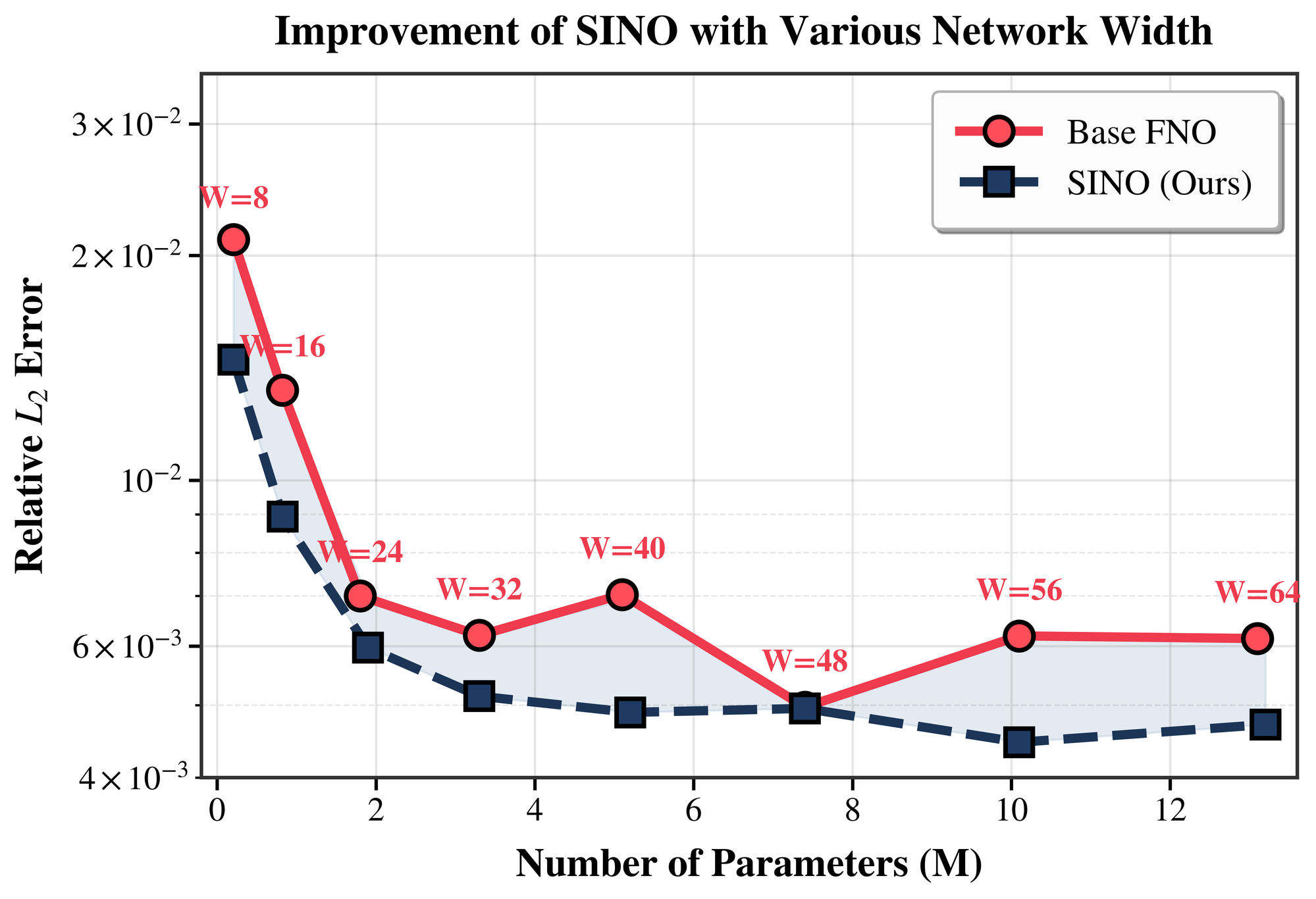}
        \caption{
        Relative error versus model size under fixed Foourier mode
        $K=20$, layers $L=4$, and iterative number $I=1$. SINO-CNN consistently improves over the
        base FNO across different parameter budgets, indicating better
        parameter efficiency with the learned CNN iterator.
        }
        \label{fig:width_params}
    \end{subfigure}
    \hfill
    \begin{subfigure}[t]{0.48\linewidth}
        \centering
        \includegraphics[width=\linewidth]{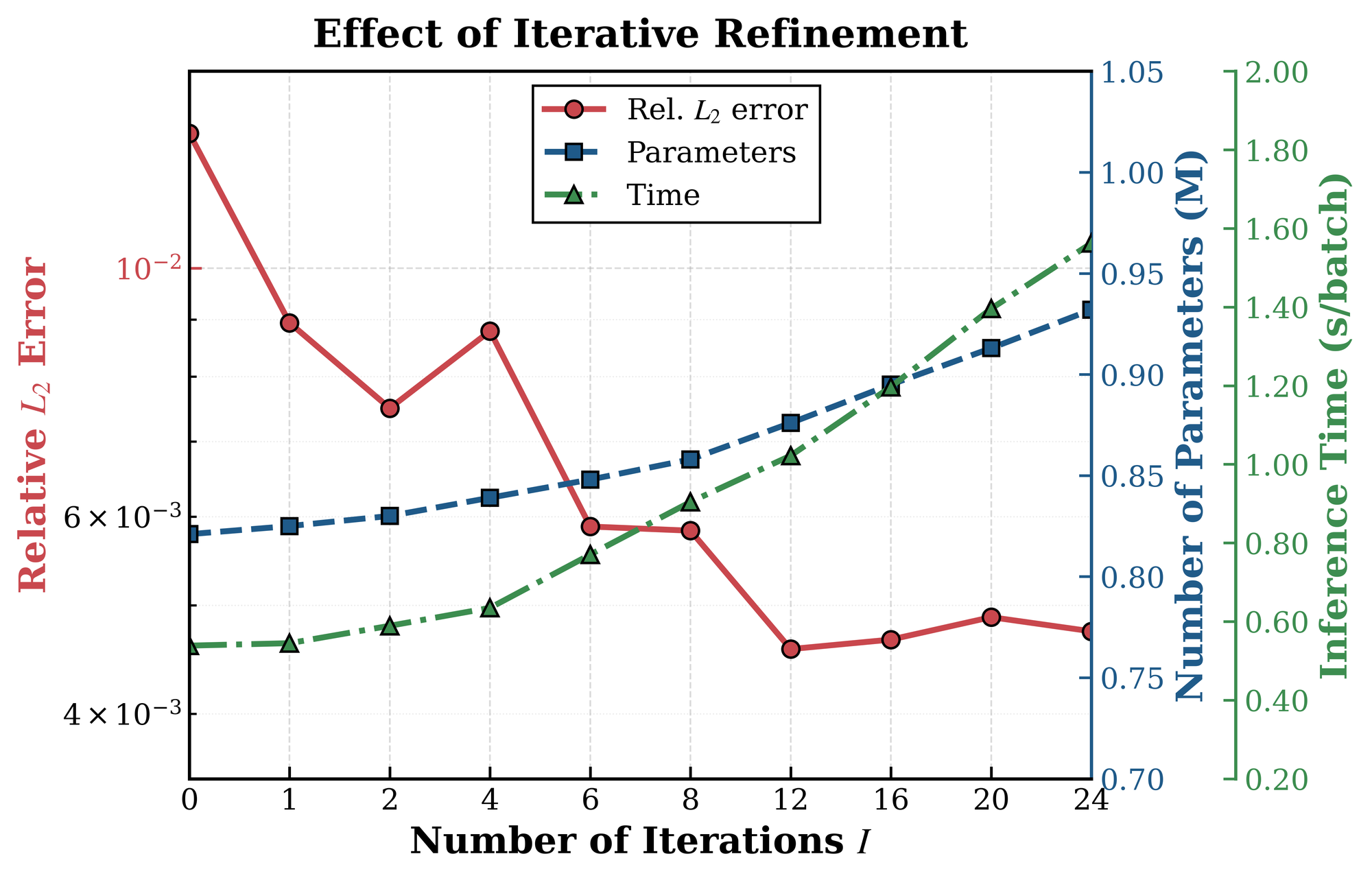}
        \caption{
        Effect of the number of refinement iterator steps $I$ under fixed Fourier
        modes and network width. Increasing $I$ generally reduces the relative
        $L^2$ error, while the number of parameters (M) and inference time (s/batch) grows moderately due to the lightweight CNN-based iterator.
        }
        \label{fig:iter_effect}
    \end{subfigure}
    \caption{
    Hyperparameter analysis on the Darcy-flow benchmark.    \\
    (a) Influence of network width and model size under fixed spectral
    resolution. The proposed SINO achieves lower relative error than the
    base FNO for comparable parameter budgets. 
    (b) Influence of the number of learned refinement iterations. Additional
    iterator steps improve accuracy with only a moderate increase in the number
    of trainable parameters and inference time, demonstrating the effectiveness and parameter
    efficiency of the CNN-based iterative refinement.
    }
    \label{fig:hyperparameter_selection}
\end{figure}
We first study the effect of the network width under a fixed spectral
resolution. Specifically, we fix the number of retained Fourier modes and the number of FNO layers as $K=20$ and $L=4$, respectively. We compare the base FNO with SINO using one iterator steps, i.e., $I=1$, while varying the network width and hence the total number of trainable parameters. As shown in Figure~\ref{fig:width_params}, SINO consistently achieves lower relative error
than the base FNO across nearly the entire range of parameter budgets. The
improvement is particularly clear in the small- and medium-size regimes, where SINO-CNN attains a better error--parameter trade-off. This suggests that, even with only one learned refinement step, the iterator provides an effective correction to the starter prediction and improves parameter efficiency.

We then examine the effect of the number of refinement iterations. The Fourier truncation level and network width are fixed, while the number of iterator steps $I$ is varied. Figure~\ref{fig:iter_effect} reports the relative $L^2$ error, the number of trainable parameters, and the inference time per batch. Increasing $I$ generally reduces the relative $L^2$ error, confirming the
effectiveness of the learned fixed-point refinement. The error drops significantly when a small number of iterations is introduced, and the improvement gradually saturates after about $I=12$. Meanwhile, the number of parameters increases only mildly because the iterator is implemented using lightweight CNN blocks. The inference time increases approximately linearly with $I$, which is expected since the refinement steps are applied sequentially. Nevertheless, for moderate iteration numbers, e.g., $I\leq 10$, the computational overhead remains limited, while the accuracy gain is substantial. These observations indicate that the iterator introduces an effective and
controllable accuracy--cost trade-off.
}

\subsection{Spectral Phenomenon of SINO}
In the theory of numerical analysis, classical iterative operators like Jacobi and Gauss-Seidel relaxation are renowned for their excellent smoothing properties, effectively attenuating high-frequency error components. Theorem \ref{thm:spectral_convergence_operator}(see more details in  ~\ref{sec:Spectral_analysis}) elucidates the spectral properties of the iterative algorithm. However, these operators struggle with low-frequency errors, leading to inefficiencies when used as standalone solvers. Thus, we employ a starter operator to provide a low-frequency approximation of the solution, which accurately captures its global features and large-scale structure. The iterator operator can then concentrate its computational resources on rapidly correcting high-frequency components, significantly enhancing the resolution of fine-grained features and accelerating the convergence process.

\begin{figure}[htp!]
    \centering
    \includegraphics[width=0.9\linewidth]{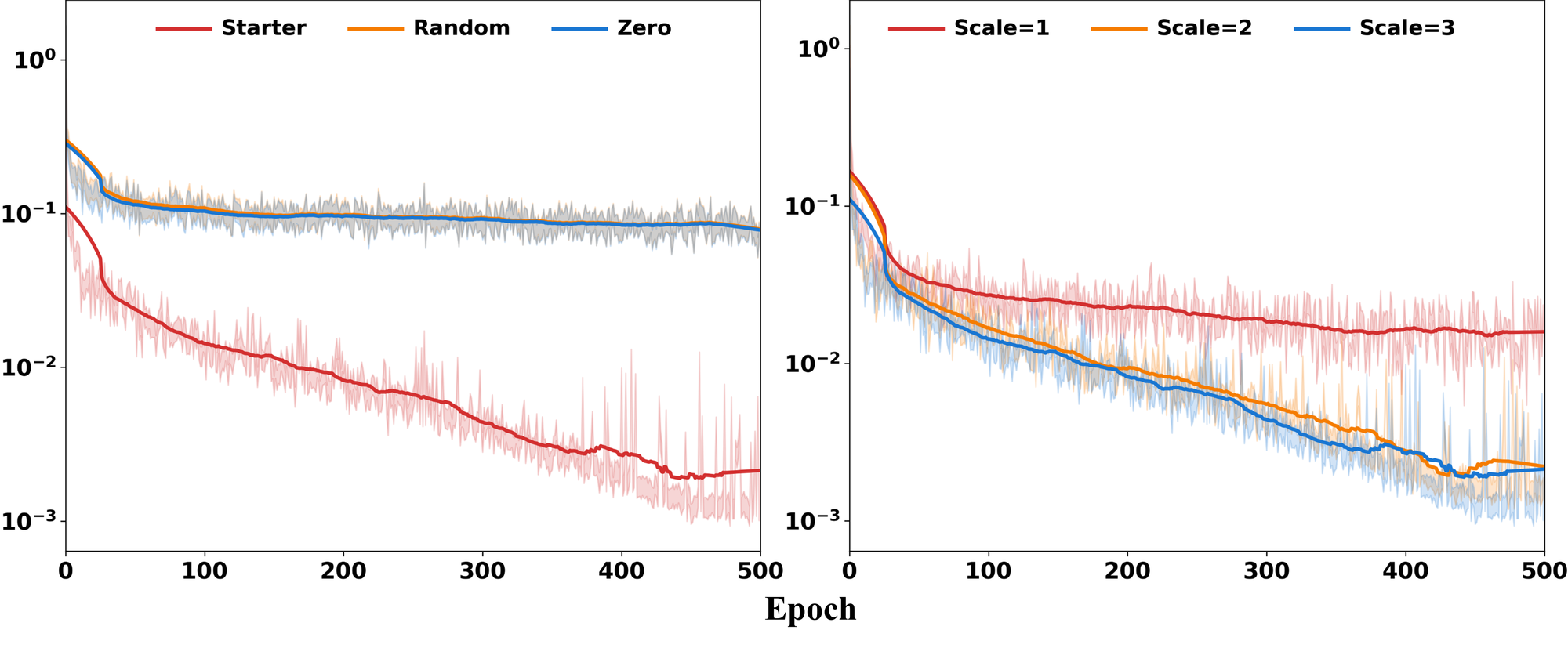}
    \caption{Effect of the starter module and multiscale depth on model convergence for operator learning of the Darcy PDE. 
    The left panel compares training dynamics under three initialization strategies: random initialization, zero initialization, and the proposed starter module. 
    Results indicate that random and zero initializations lead to similar convergence behaviors, whereas the starter module substantially accelerates convergence. 
    The right panel investigates the impact of multiscale depth ($\text{Scale}=1,2,3$) under starter initialization, showing that moderate multiscale depth improves convergence speed, while overly deep architectures may lead to over-parameterization and diminished gains.}
    \label{fig:extend_figure2}
\end{figure}

\begin{figure}[t]
    \centering
    \includegraphics[width=1\linewidth]{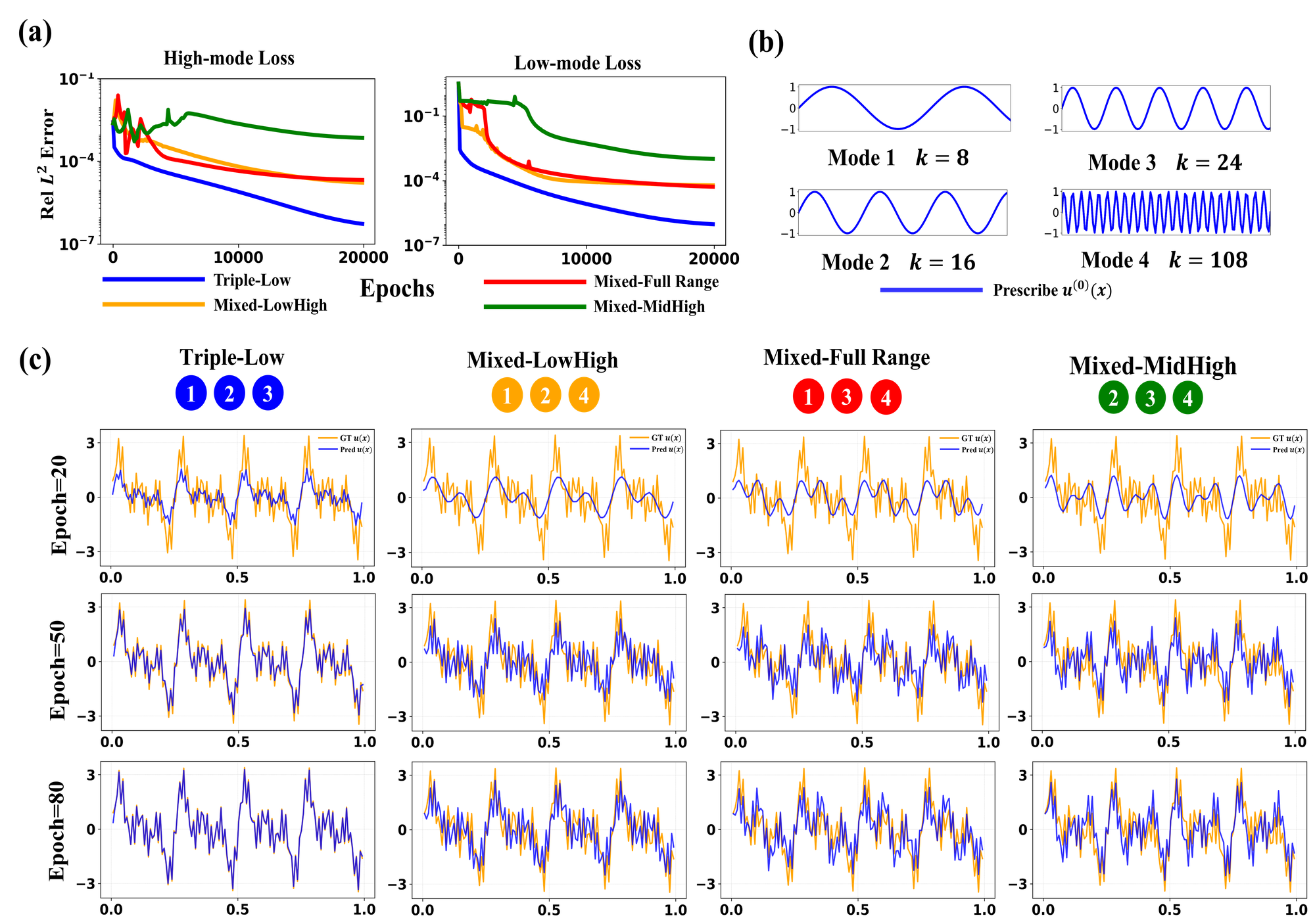}\\
        \vspace{0.3cm}
    \caption{\textbf{Spectral phenomenon of SINO.} (a) Training loss curves for both low- and high-frequency components under different initialization frequency combinations.  (b) Visualization of the four sets of harmonic functions corresponding to the frequency bands selected in the experiments.   (c) Qualitative fitting results at epochs 20, 50, and 80, using three out of the four frequency groups from (b) for initialization.  The specific combinations are indicated with colored markers below each column, for example, “Mix-MidHigh” refers to modes 2, 3, and 4. The color coding of the loss curves in (a) corresponds to the column order in (c): green for the first, red for the second, orange for the third, and blue for the fourth.}
\label{fig:exp1}
\end{figure}

Theorem~\ref{thm:spectral_convergence_operator} implies that reducing the low-frequency components of the initial error can significantly accelerate the convergence of stationary iterative methods. To empirically validate this observation, we consider the following one-dimensional Laplace equation with Dirichlet boundary conditions on the interval~$[0,1]$:
\begin{equation*}
    \begin{cases}
        -u''(x) = f(x), & x \in [0,1], \\
        u(0) = u(1) = 0. &
    \end{cases}
\end{equation*}
The analytical solution admits a Fourier sine series representation and can therefore be interpreted as a superposition of harmonic modes across different frequency bands. By initializing the iterative procedure with harmonic functions of prescribed frequencies and applying the parameterized iterative network~$\mathcal{I}_{\theta}$, we approximate the true solution and examine the spectral behavior of the learned iterator.

The numerical results are summarized in Figure~\ref{fig:exp1}. As shown in the training loss curves in Figure~\ref{fig:exp1}(a), when the initialization suppresses low-frequency error components, both low- and high-frequency errors diminish rapidly and eventually reach minimal values. In contrast, when the initialization suppresses only high-frequency errors, the network tends to first approximate the dominant low-frequency modes, leading to a loss of existing high-frequency information and ultimately yielding inferior convergence performance.

This observation is further supported by the reconstructions shown in Figure~\ref{fig:exp1}(c). Initializing the iterations with the three lowest frequency modes enables the network to recover the highest frequency features by approximately epoch~20 and nearly complete the fitting process by epoch~80. Conversely, although the alternative initialization strategies introduce high-frequency components at the outset, the network nevertheless begins the approximation from the low-frequency range, causing the high-frequency information to deteriorate around epoch~20 and resulting in slower overall convergence. These findings provide compelling evidence of the importance of initializing with reduced low-frequency error in achieving efficient and accurate iterative refinement.

\subsection{Time Bundling Strategy}  
We perform a systematic empirical study within the neural operator framework to investigate the impact of time-bundling strategies and auto-regressive training on the long-term forecasting of time-dependent PDEs. Let $L_{\text{in}}$ and $L_{\text{out}}$ denote the number of consecutive input and output time steps, respectively. In this configuration, multiple historical snapshots are concatenated along the channel dimension to form multi-channel input-output pairs, which the model learns to map across temporal windows. During inference, the framework operates in a recursive auto-regressive fashion, where previously predicted time bundles serve as the initialization for subsequent steps to facilitate extended temporal horizons, as illustrated in Figure~\ref{fig:timebundling}(b).

We consider a one-dimensional parabolic PDE with a closed-form solution to minimize the influence of numerical discretization errors:
\begin{equation*}
\left\{
\begin{aligned}
\frac{\partial u}{\partial t} = \frac{\partial^2 u}{\partial x^2} + f(x,t), & \quad (x,t)\in [0,1]\times[0,1],\\
u(0,t) =u(1,t)=0, & \quad t\in[0,0.1].
\end{aligned}
\right.
\label{pde:parabolic}
\end{equation*}
The ground-truth solution is given by
\begin{equation*}
u(x,t) = \sum_{k \in \mathbb{N}^{+}} \sin(2 \pi k x) e^{-t},
\label{eq:truth}
\end{equation*}
where $1$–$6$ frequencies are randomly selected from the integer range $[2,24]$, and the corresponding source term $f(x,t)$ is computed from the PDE constraint. We set $x \in [0,1]$, $t \in [0,0.1]$, $nx=1024$, and $nt=100$, resulting in more than 20{,}000 distinct frequency combinations for training.

\begin{figure}
    \centering
    \includegraphics[width=1\linewidth]{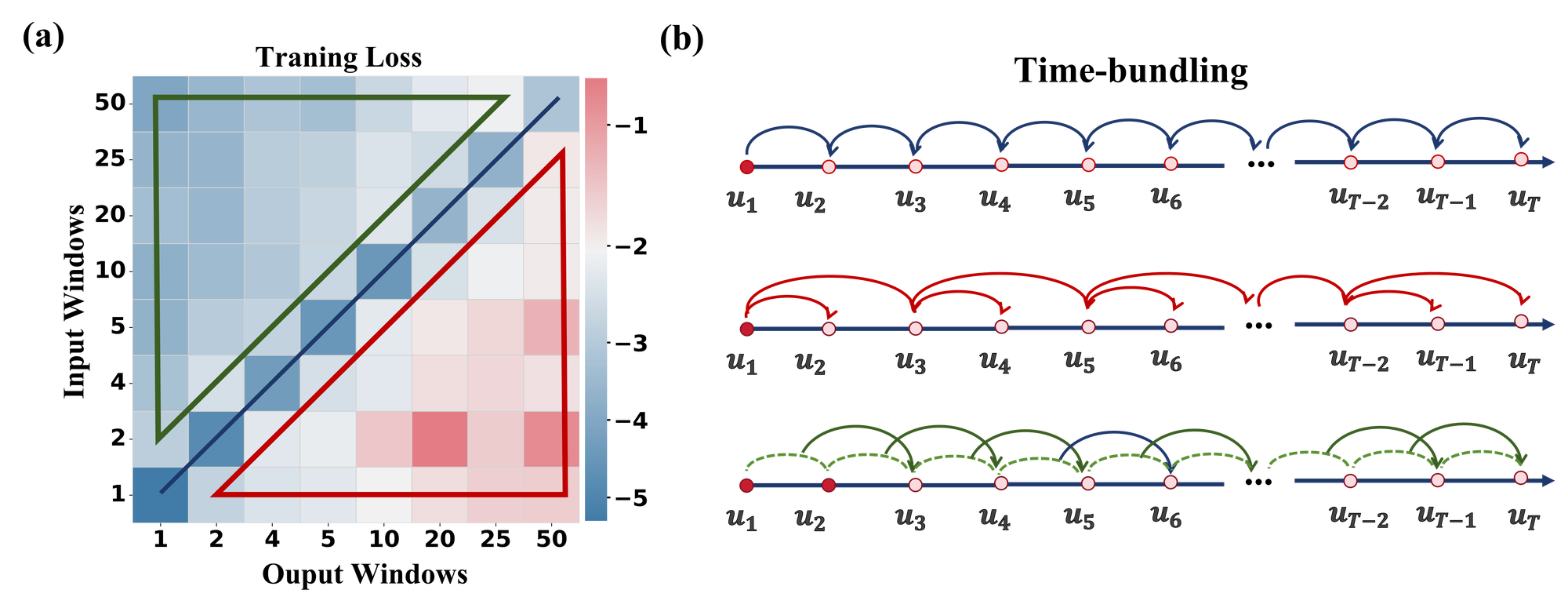}
    \caption{(a) Heatmap of training losses with respect to different input and output sequence lengths $(L_{\text{in}}, L_{\text{out}})$. 
    Three characteristic regions can be observed: the upper-left green triangle corresponds to $L_{\text{in}} > L_{\text{out}}$, 
    representing the case where the network predicts a shorter horizon than the available input (cf. third row in (b)); 
    the lower-right red triangle indicates $L_{\text{in}} < L_{\text{out}}$, where the network is required to extrapolate beyond the input sequence (cf. second row in (b)); 
    and the blue diagonal line represents the matched case $L_{\text{in}} = L_{\text{out}}$, i.e., one-to-one temporal mapping (cf. first row in (b)). 
    Panel (b) illustrates the corresponding temporal dependency structures under these three regimes.%
    }
    \label{fig:timebundling}
\end{figure}

As illustrated in Figure ~\ref{fig:timebundling}(a), the training error is substantially reduced when $L_{\text{in}} = L_{\text{out}}$, indicating that a balanced mapping between input and output facilitates neural network learnability. In contrast, when the number of input steps exceeds the number of outputs, the task resembles compressed sensing, whereas the opposite case corresponds to a generative problem. Both cases impose higher demands on the network's representational capacity, making them less favorable for PDE-related temporal prediction. 

\subsection{Results on Time-evolution Equations} \label{time_results}
Time-dependent partial differential equations Eq.~\eqref{pde} are essential mathematical models for describing dynamic processes in physical systems, and their numerical solutions form the foundation of forward problem studies. We investigate the performance of SINO on several time-evolution PDEs, including the incompressible Navier-Stokes equation, acoustic wave equation, shallow-water equation, etc. Specifically, we conduct comparative experiments on three representative classes of evolution equations, benchmarking them against three widely used neural operator baselines: FNO \cite{li2020fourier}, DeepONet \cite{LuDeepOnet}, and MgNO \cite{he2023mgno}.
The relative errors, computed according to Eq. \eqref{eq:lossfun1}, obtained upon convergence of the training process are summarized in Table \ref{tab:error_comparison}. This table provides a quantitative comparison of the one-step prediction errors of DeepONet, FNO, MgNO, and SINO on the test datasets, clearly demonstrating the superior performance of our method over the baseline approaches.

\textbf{Incompressible Navier-Stokes equation.} The governing equations for an incompressible viscous fluid flow in vorticity form on the unit torus subjected to an external force field \( \boldsymbol{f}(\mathbf{x},t) \) are expressed as follows:
\begin{equation}\label{NS}
    \partial_t \boldsymbol{\omega}(\mathbf{x},t) + \boldsymbol{u}(\mathbf{x},t)\cdot \nabla \boldsymbol{\omega}(\mathbf{x},t) 
    = \nu \Delta \boldsymbol{\omega}(\mathbf{x},t) + \boldsymbol{f}(\mathbf{x},t), \quad (\mathbf{x}, t) \in \Omega \times (0, T]
\end{equation}
subject to the initial condition \(\boldsymbol{\omega}(\mathbf{x},0) = \boldsymbol{\omega}_0(\mathbf{x})\), where \(\omega(\mathbf{x},t)\) is the scalar vorticity field, with \(\boldsymbol{\omega} = \nabla \times \boldsymbol{u}\) in 2D, and \(\boldsymbol{u}(\mathbf{x},t)\) is the velocity field that satisfies the incompressibility condition \(\nabla \cdot \boldsymbol{u} = 0\). The parameter \(\nu\) is the viscosity coefficient, and \(\boldsymbol{f}(\mathbf{x},t)\) represents the external forcing in the vorticity equation. 
We train the SINO model to solve Eq. \eqref{NS}, which learns the mapping \(\mathcal{G}\) between vorticity fields \(\boldsymbol{\omega}(\mathbf{x}, t)\) at two consecutive time steps during training:
\[
\mathcal{G}: \boldsymbol{\omega}(\mathbf{x}, t) \mapsto \boldsymbol{\omega}(\mathbf{x}, t+\Delta t).
\]
We consider two different viscous incompressible flows with viscosities $\nu = 10^{-3}$ and $\nu = 10^{-4}$, respectively. Table \ref{tab:error_comparison} and Figure \ref{fig:evolutionPDE}(a) present a comparison of the one-step prediction errors of DeepONet, FNO, MgNO, and SINO on the test datasets. DeepONet is grounded in basis function representation theory, approximating the solution space by parameterizing both basis functions and coefficients. 
However, for time-dependent operators, the input and output functions evolve with time, which makes it challenging for DeepONet to maintain accuracy over long temporal horizons. 
Moreover, many evolution equations develop complex multi-scale behaviors, such as turbulence, at long time scales. 
Fully connected architectures struggle to capture such localized structures, leading to large single-step errors and, although numerically stable, long-term predictions that deviate from physical consistency. 
Multi-scale convolutional architectures, such as MgNO, achieve improved PDE solution accuracy, still struggle to learn complex time-varying mappings and exhibit gradually increasing errors in long-term forecasting. 
In contrast, spectral learning strategies tend to preserve low-frequency structures more effectively, which can help to enhance the stability for long-term predictions. 

\begin{table}[t]
\centering
\caption{One-step Error Comparison between our SINO and other established neural operators. }
\label{tab:error_comparison}
\begin{tabular}{c|cc|cc|c}
\toprule
\multirow{2}{*}{\textbf{Model}} & \multicolumn{2}{c|}{\textbf{Navier-Stokes}} & \multicolumn{2}{c|} {\textbf{Acoustic-Wave}}  & \multirow{2}{*}{\textbf{Shallow-Water}} \\
 & \(\nu = 1e-3\) & \(\nu = 1e-4\) & source variable & parameter variable &     \\
\midrule
DeepONet   & 1.720e-01  & 3.823e-01  & 2.046e-01 & 6.040e-01 & 3.000e-03 \\
\midrule
MgNO        & 2.200e-03    & 4.950e-02   & 1.100e-03 & 4.220e-02 & 9.000e-04 \\
\midrule
FNO        & 8.000e-04    & 7.000e-02   & 1.800e-03 & 3.730e-02 & 1.300e-03  \\
\midrule
\cellcolor{gray!30} \textbf{SINO}    & \cellcolor{gray!30}\textbf{3.000e-04}    & \cellcolor{gray!30} \textbf{8.400e-03}    & \cellcolor{gray!30}\textbf{5.000e-04}  & 
\cellcolor{gray!30}\textbf{1.940e-02} & \cellcolor{gray!30}\textbf{6.000e-04}\\
\bottomrule
\end{tabular}
\end{table}

To approximate the solution to the equation, the learned operator is applied autoregressively to the initial condition \(\boldsymbol{\omega}_0(\mathbf{x})\) for iterative solving, a process that demands the neural operator possess strong generalization capabilities. The left panel of Figure \ref{fig:evolutionPDE}(b) shows the error evolution curves over 50 autoregressive time steps. As shown in Figure~\ref{fig:evolutionPDE}(b), when the viscosity coefficient is relatively large and the dynamics remain smooth, spectral approaches maintain a stable trend, whereas larger viscosity leads to faster accumulation of prediction error. 
Our proposed SINO incorporates spectral learning in the initialization step and further refines high-frequency details through parameterized iterative updates, consistently achieving lower single-step error than FNO and offering improved stability in long-term forecasting. Furthermore, since the condition number of the discretized linear system varies with the viscosity coefficient, the accuracy of neural operators differs across scenarios. A smaller viscosity coefficient leads to a more ill-conditioned system, thereby increasing the difficulty of solving the equations. As SINO is designed based on iterative schemes, its efficiency also declines under such conditions, similar to classical iterative solvers.

\begin{figure}
    \centering
    \includegraphics[width=0.95\linewidth]{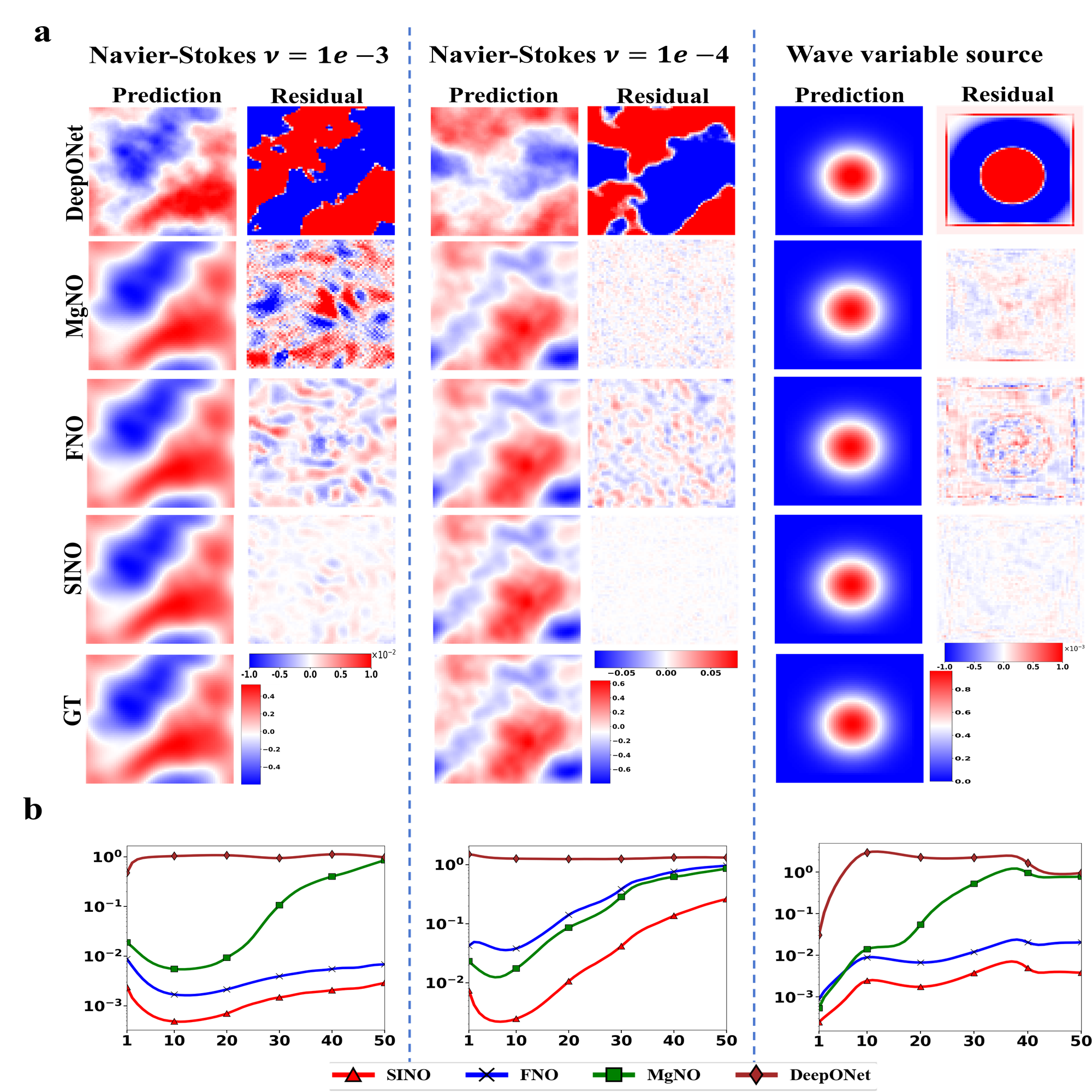}
    \vspace{0.1cm}
    \caption{\textbf{Results on Navier-Stokes equation and Wave equation.} The first and second columns show predicted solutions (first column) and corresponding error maps (second column) for the 2D Navier–Stokes equation with viscosity $\nu=1e-3$, generated by different operator learning methods (one per row). The third and fourth columns display analogous results under reduced viscosity $\nu=1e-4$. The fifth and sixth columns present predictions and residuals for the 2D wave equation with varying source terms. The last row in each block corresponds to the ground truth. Our method consistently exhibits the lowest prediction error and clearest reconstruction across varying physical regimes. The bottom row shows the evolution of relative $L^2$
  errors over 50 autoregressive inference steps for the three test scenarios, highlighting the superior long-term accuracy and stability of our model compared to baselines.}
    \label{fig:evolutionPDE}
\end{figure}

\textbf{Acoustic Wave Equation.}
The acoustic wave equation is a fundamental model describing the propagation of sound waves in a medium. It is widely used in geophysics, medical imaging, and non-destructive testing for simulating acoustic wave fields. The two-dimensional acoustic wave equation with a source term is expressed as
\begin{equation*}
    \frac{\partial^2 \boldsymbol{u}(\mathbf{x},t)}{\partial t^2} 
    = \frac{1}{c^2(\mathbf{x})} 
        \frac{\partial^2 \boldsymbol{u}(\mathbf{x},t)}{\partial \mathbf{x}^2}  
 + \boldsymbol{f}(\mathbf{x},t), \quad \mathbf{x} \in \Omega
\end{equation*}
with the given boundary conditions
\begin{equation*}
\left\{
\begin{array}{@{}l@{\quad}l@{}}
\displaystyle
 \boldsymbol{u}(\mathbf{x}, 0) = u_0 (\mathbf{x}), & \mathbf{x} \in \Omega,
\\[1.6ex]
\displaystyle
\frac{\partial \boldsymbol{u}(\mathbf{x}, 0)}{\partial t} =0 & \mathbf{x} \in \Omega,
\end{array}
\right.
\end{equation*}
where $\Omega$ is the computational region, \(\boldsymbol{u}(\mathbf{x},t)\) denotes the acoustic pressure field, \(c(\mathbf{x})\) represents the wave speed in the medium, and \(\boldsymbol{f}(\mathbf{x},t)\) is the external source term. The initial condition is set as \(u_0 (\mathbf{x}) = \boldsymbol{f}(\mathbf{x}, 0)\) corresponding to a quiescent medium prior to excitation. Absorbing boundary conditions are applied to prevent reflections from the edges of the computational domain.

In scenarios where the wave speed distribution \(c(\mathbf{x})\) of the medium is fixed and the source location varies, the steadiness of the velocity field enables us to model the transition between successive time steps given a specific source term. The initial condition inherently encodes information about the source function, allowing us to learn a mapping \(\mathcal{G}\) between consecutive temporal states
\[\mathcal{G}: \boldsymbol{u}(\mathbf{x},t) \mapsto \boldsymbol{u}(\mathbf{x},t+\Delta t).\] 
As illustrated in Table \ref{tab:error_comparison} and Figure \ref{fig:evolutionPDE}, our results are consistent with the findings for the Navier-Stokes equations, with our SINO demonstrating the highest precision in both single-step predictions and iterative rollouts. 
{The starter-iterator architecture effectively serves as a robust framework for operator learning, demonstrating competitive performance across different simulation scenarios.}

In scenarios where the wave speed distribution \(c(\mathbf{x})\) varies while the source location remains fixed, 
the learning task becomes a more challenging mapping 
\[\mathcal{G}: \big(\boldsymbol{u}(\mathbf{x},t); c(\mathbf{x}) \big) \mapsto \boldsymbol{u}(\mathbf{x},t+\Delta t).\] 
The velocity distribution \(c(\mathbf{x})\) is obtained from simulated breast tissue slice data 
\cite{zeng2025openbreastus}, and the training dataset is generated using a 500~kHz ultrahigh-frequency wave 
    (see ~\ref{secATeq} for additional details on data generation). 
{We explore the application of neural operators to the simulation of ultrahigh-frequency acoustic waves, demonstrating their potential in this challenging regime.}
The one-step prediction results are summarized in Table~\ref{tab:error_comparison}, 
where our proposed SINO consistently achieves the highest accuracy among all baselines. 
Furthermore, additional visualizations (as shown in Figure~\ref{fig:High_wave}) illustrate that our approach 
remains stable over long-term propagation, producing physically meaningful pressure fields. 
In contrast, conventional operator learning methods tend to accumulate errors rapidly, 
resulting in predictions that lose physical interpretability. 
This highlights the superior temporal stability of our method.

{\textbf{Longer horizon inference.}
To further validate the temporal stability of our method, we conduct an additional experiment on the
one-dimensional advection--diffusion equation from PDEBench~\cite{PDEbench}. The governing equation is
given by
\begin{equation}
    \partial_t u(t,x) + \beta \partial_x u(t,x) = 0,
    \qquad x\in(0,1),\quad t\in(0,2],
    \label{eq:advection}
\end{equation}
with the initial condition
\begin{equation}
    u(0,x)=u_0(x), \qquad x\in(0,1),
\end{equation}
where $\beta$ denotes a constant advection speed. The problem is considered
under periodic boundary conditions. For this equation, the solution corresponds
to a translation of the initial profile,
\[
    u(t,x)=u_0(x-\beta t),
\]
which makes it a suitable benchmark for evaluating whether a learned model can
preserve transported structures over long temporal horizons.

Following the data-generation protocol of PDE Bench~\cite{PDEbench}, the initial condition is
constructed as a superposition of sinusoidal waves,
\begin{equation}
    u_0(x)
    =
    \sum_{i=1}^{N}
    A_i \sin(k_i x+\phi_i),
    \label{eq:advection_initial}
\end{equation}
where $k_i=2\pi n_i/L_x$ is the wave number, $n_i$ is randomly sampled from
$[1,n_{\max}]$, $A_i$ is uniformly sampled from $[0,1]$, and
$\phi_i$ is randomly sampled from $(0,2\pi)$. 
The reference numerical solutions are generated using a second-order upwind finite-difference scheme in both space and time. We train the SINO with other Competitive models and report the relative $L_2$ error along a $100$-step prediction horizon.

As shown in Figure.~\ref{fig:long_horizon_advection_diffusion}, the prediction error
increases with time for all methods, which is expected for autoregressive
rollouts due to accumulated numerical and modeling errors. Nevertheless, SINO
consistently achieves the lowest relative $L_2$ error over the entire horizon.
Compared with FNO, SINO exhibits a significantly slower error growth rate,
indicating better stability in long-time propagation. Compared with MgNO, SINO
also maintains a clear advantage across most rollout steps. These results
suggest that the proposed starter--iterator structure is effective not only for
short-term operator approximation but also for mitigating error accumulation in
long-horizon time-dependent PDE prediction.}

\textbf{Ablation study.}
The shallow-water equations, derived as a depth-averaged approximation of the Navier-Stokes equations, offer a traditional framework for modeling large-scale geophysical flows, including ocean currents, tidal waves, and atmospheric circulation. These equations address the horizontal motion of a thin fluid layer, assuming the horizontal dimensions significantly exceed the vertical depth. In two spatial dimensions with external forces, the system is expressed as follows:
\begin{equation*}
\left\{
\begin{array}{@{}l@{\quad}l@{}}
\displaystyle
\frac{\partial (\boldsymbol{h}\boldsymbol{u})}{\partial t} 
+ \frac{\partial}{\partial x}\!\left( \boldsymbol{h}\boldsymbol{u}^2 + \tfrac{1}{2} g \boldsymbol{h}^2 \right) 
+ \frac{\partial (\boldsymbol{h}\boldsymbol{u}\boldsymbol{v})}{\partial y}
& \displaystyle = - g \boldsymbol{h}\frac{\partial \boldsymbol{b}}{\partial x} + \boldsymbol{F}_x(x,y,t),
\\[1.2ex]
\displaystyle
\frac{\partial (\boldsymbol{h}\boldsymbol{v})}{\partial t} 
+ \frac{\partial (\boldsymbol{h}\boldsymbol{u}\boldsymbol{v})}{\partial x} 
+ \frac{\partial}{\partial y}\!\left( \boldsymbol{h}\boldsymbol{v}^2 + \tfrac{1}{2} g \boldsymbol{h}^2 \right) 
& \displaystyle =  - g \boldsymbol{h}\frac{\partial \boldsymbol{b}}{\partial y} + \boldsymbol{F}_y(x,y,t),
\end{array}
\right.
\end{equation*}
subject to the conservation law
\begin{equation*}
    \frac{\partial \boldsymbol{h}}{\partial t} + \frac{\partial (\boldsymbol{h}\boldsymbol{u})}{\partial x} + \frac{\partial (\boldsymbol{h}\boldsymbol{v})}{\partial y}  = 0,
\end{equation*}
where \( \boldsymbol{h}(x,y,t) \) is the fluid depth, and \( \boldsymbol{u}(x,y,t) \) and \( \boldsymbol{v}(x,y,t) \) are the horizontal velocity components in the \( x \) and \( y \) directions, respectively. The constant \( g \) denotes the gravitational acceleration, and \( \boldsymbol{b} \) describes a spatially varying bathymetry. The terms \( \boldsymbol{F}_x(x,y,t) \) and \( \boldsymbol{F}_y(x,y,t) \) represent external forces in the two horizontal directions. Our aim is to learn the mappings between consecutive time steps for the velocity variables $u$ and $v$, respectively
\begin{equation*}
    \mathcal{G}: \boldsymbol{u}(x, y, t) \mapsto \boldsymbol{u}(x,y,t+\Delta t) \quad \text{and} \quad  \mathcal{G}: \boldsymbol{v}(x, y, t) \mapsto \boldsymbol{v}(x,y,t+\Delta t).
\end{equation*}
We further perform ablation studies and extrapolation evaluations using a simplified format of the Navier-Stokes equations: the shallow-water equations (SWE), exploring the influence of three strategies on the performance of our model. Specifically, we examine three key components: the starter module $S$, the iterator module $I$, and the multi-scale structure $M$. The ablation results are summarized in Table~\ref{Ablation}, where we report the relative $L^2$ error of the model predictions at the 10th, 20th, 30th, 40th, and 50th time steps. 

\begin{table*}[h]
\centering
\caption{Ablation studies on the shallow-water equation, where $S$ denotes the starter module, $I$ denotes the iterator module, and $M$ denotes the multi-scale framework. We report the relative $L^2$ error at the 10th, 20th, 30th, 40th, and 50th inference steps.}
\label{Ablation}
\begin{tabular}{ccc|ccccc}
\toprule
 $S$ & $I$ & $M$  & \textbf{T=10} & \textbf{T=20} & \textbf{T=30} & \textbf{T=40} & \textbf{T=50}\\
\midrule
    \checkmark & \quad   & \quad   &  9.094e-04   & 2.097e-03  & 8.637e-03 & 2.368e-02 & 5.259e-02     \\
    \checkmark  & \quad & \checkmark    & 1.878e-03  & 2.679e-03  & 3.600e-03 & 4.565e-03 & 5.848e-03  \\
    \quad & \checkmark   & \checkmark    &  2.146e-03   & 3.300e-03 & 4.662e-03 & 7.663e-03 & 1.612e-02 \\
     \checkmark  & \checkmark & \quad      &  4.087e-04   & 1.263e-03 & 2.726e-03 & 4.511e-03 & 6.801e-03   \\
   \cellcolor{gray!30}\checkmark & \cellcolor{gray!30}\checkmark & \cellcolor{gray!30}\checkmark  &  \cellcolor{gray!30}\textbf{2.634e-04} & \cellcolor{gray!30}\textbf{4.471e-04} & \cellcolor{gray!30}\textbf{5.775e-04} & \cellcolor{gray!30}\textbf{6.551e-04} & \cellcolor{gray!30}\textbf{7.593e-04} \\
\bottomrule
\end{tabular}
\end{table*}

\begin{figure}[t]
    \centering
    \includegraphics[width=0.9\linewidth]{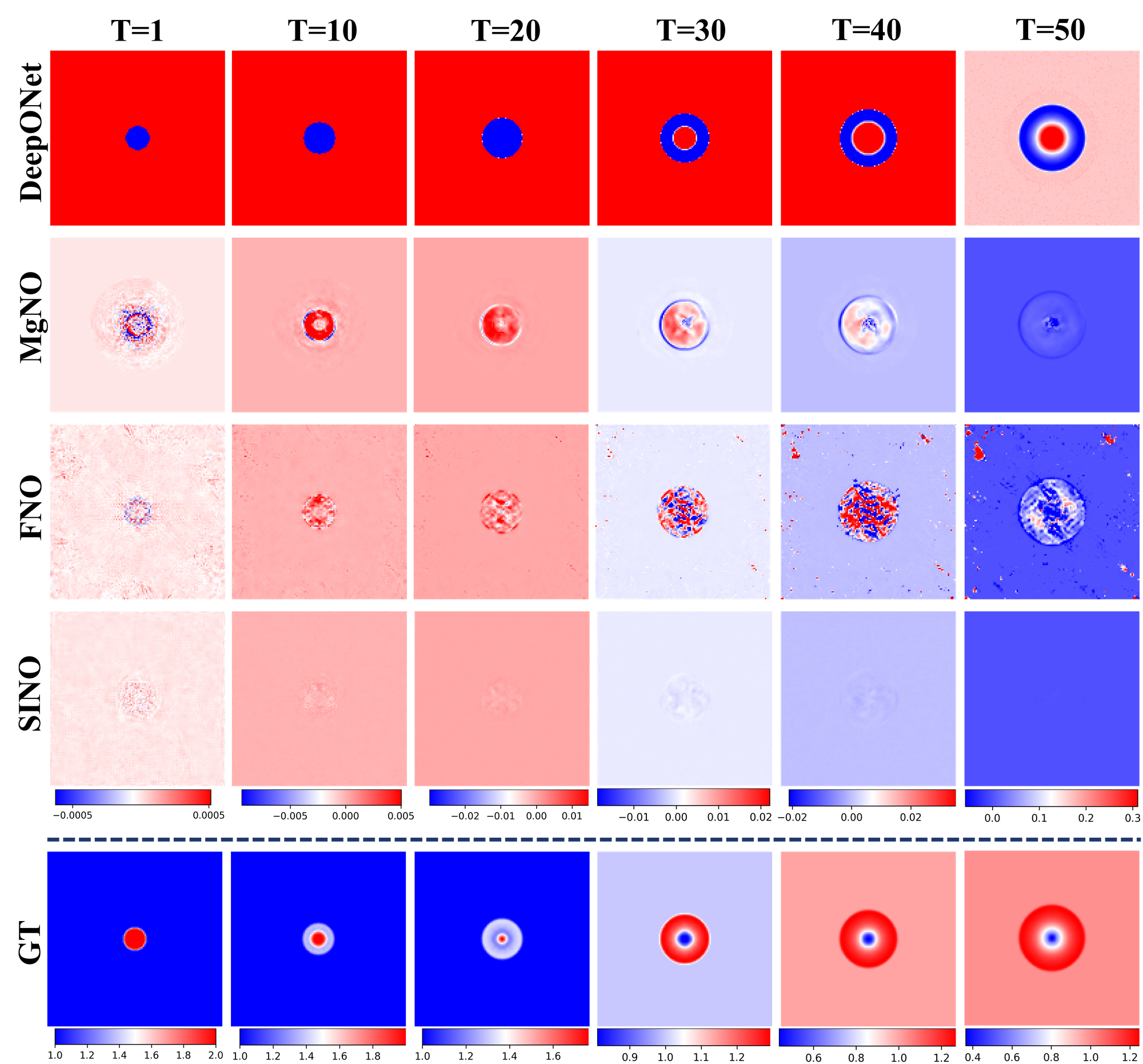}\\
    \caption{Visualization of residual maps between predicted and ground-truth solutions of the shallow-water equation. Rows 1–4 correspond the \textbf{residual map} to DeepONet, MgNO, FNO, and our proposed method, respectively, while row 5 shows the ground-truth solutions. Each column represents autoregressive rollout results at time steps 1, 10, 20, 30, 40, and 50.}
    \label{fig:shallow_water}
\end{figure}

The one-step error presented in Table \ref{tab:error_comparison} further attests to the consistent performance of our SINO on the shallow-water equations. Additionally, the residual maps (see in Figure~\ref{fig:shallow_water}) demonstrate that SINO effectively preserves high-frequency components in the solution during rollout, maintaining consistently small residuals over extended prediction horizons. Although the training process is confined to learning the mapping \(\mathcal{G}\) for \(\boldsymbol{u}\) between adjacent time steps, the proposed operator exhibits robust generalization to long-term autoregressive inference, demonstrating superior predictive accuracy and stability. Furthermore, we conducted an ablation study of SINO on the shallow-water equation dataset (see ~\ref{secATeq} for details). The results indicate that incorporating both the starter operator $\mathcal S_\theta$ and iterator operator $\mathcal I_\theta$ significantly reduces prediction error. This finding underscores that frequency-domain initialization and spatiotemporal residual iteration play complementary roles in capturing the underlying system dynamics.

From Table \ref{Ablation}, we observe several key trends. First, incorporating both starter operator $S$ and iterator operator $I$ learning significantly reduces the prediction error compared to using either strategy alone, indicating their complementary roles in capturing the underlying dynamics. Second, the multi-scale framework $M$ further enhances model performance, particularly at later time steps (T=40 and T=50), suggesting its effectiveness in mitigating error accumulation during autoregressive inference. The combination of all three components achieves the lowest relative $L^2$ error across all time steps, highlighting the importance of integrating spectral, spatiotemporal, and multi-scale representations to improve the long-term stability and accuracy of operator learning models.

\subsection{Application on Super-Resolution Imaging}  
Super-resolution microscopy (SRM) stands as a pivotal advancement in life sciences imaging, empowering researchers to transcend the optical diffraction limit and explore subcellular structures with unprecedented clarity. By surpassing the conventional Rayleigh criterion, SRM techniques such as stochastic optical reconstruction microscopy (STORM) \cite{rust2006sub}, photoactivated localization microscopy (PALM) \cite{betzig2006imaging}, and structured illumination microscopy (SIM) \cite{li2015extended} offer profound insights into biological processes at the nanoscale. Theoretically, the fluorescence microscopy imaging process can be modeled as a spatial convolution between the high-resolution distribution of fluorophores and the system’s point spread function (PSF), with measurement noise posing a significant challenge to the fidelity of the acquired data.  

The degradation process of high-resolution fluorophores can be described by the integral Eq. \eqref{integral}.  
The high-resolution fluorophore distribution is represented in vector form as \(\mathbf{x}_{\mathrm{sr}} \in \mathbb{R}^n\), and the observed low-resolution data is denoted as \(\mathbf{x}_{\mathrm{lr}} \in \mathbb{R}^m\). Due to the reduction in resolution, it generally holds that \(m < n\). The corresponding image reconstruction problem can be formulated as the following linear inverse problem:
\begin{equation}\label{sim-back}
    \mathbf{x}_{\mathrm{lr}} = A \mathbf{x}_{\mathrm{sr}} + \boldsymbol{\eta},
\end{equation}
where \(A \in \mathbb{R}^{m \times n}\) is the imaging operator, each row of which is constructed by shifting and downsampling the point spread function (PSF). Since the system is underdetermined, solving Eq. \eqref{sim-back} is an ill-posed problem, which can be addressed using operator learning method by
\begin{equation*}
    \mathcal{G}: \mathbf{x}_{\mathrm{lr}} \mapsto \mathbf{x}_{\mathrm{sr}}.
\end{equation*}
We utilize the publicly available BioSR dataset \cite{qiao2021evaluation} for super-resolution tasks, which comprises over 2,200 paired low-resolution and high-resolution images acquired through a multimodal structured illumination microscopy (SIM) system. The dataset includes four types of intracellular structures with increasing structural complexity: clathrin-coated pits (CCPs), microtubules (MTs), F-actin filaments, and endoplasmic reticulum (ER). As the resolution becomes higher, the reconstruction challenges also increase. For each structural type, approximately 50 regions of interest (ROIs) were captured under nine different excitation intensities. The corresponding raw SIM images were processed to generate paired diffraction-limited wide-field (WF) images and high-resolution SIM reconstructions. To ensure data quality, samples of F-actin, MTs, and CCPs were imaged from fixed cells, whereas ER data were acquired from live cells due to fixation-induced artifacts.

\begin{figure}[htp!]
    \centering
    \includegraphics[width = 1\linewidth]{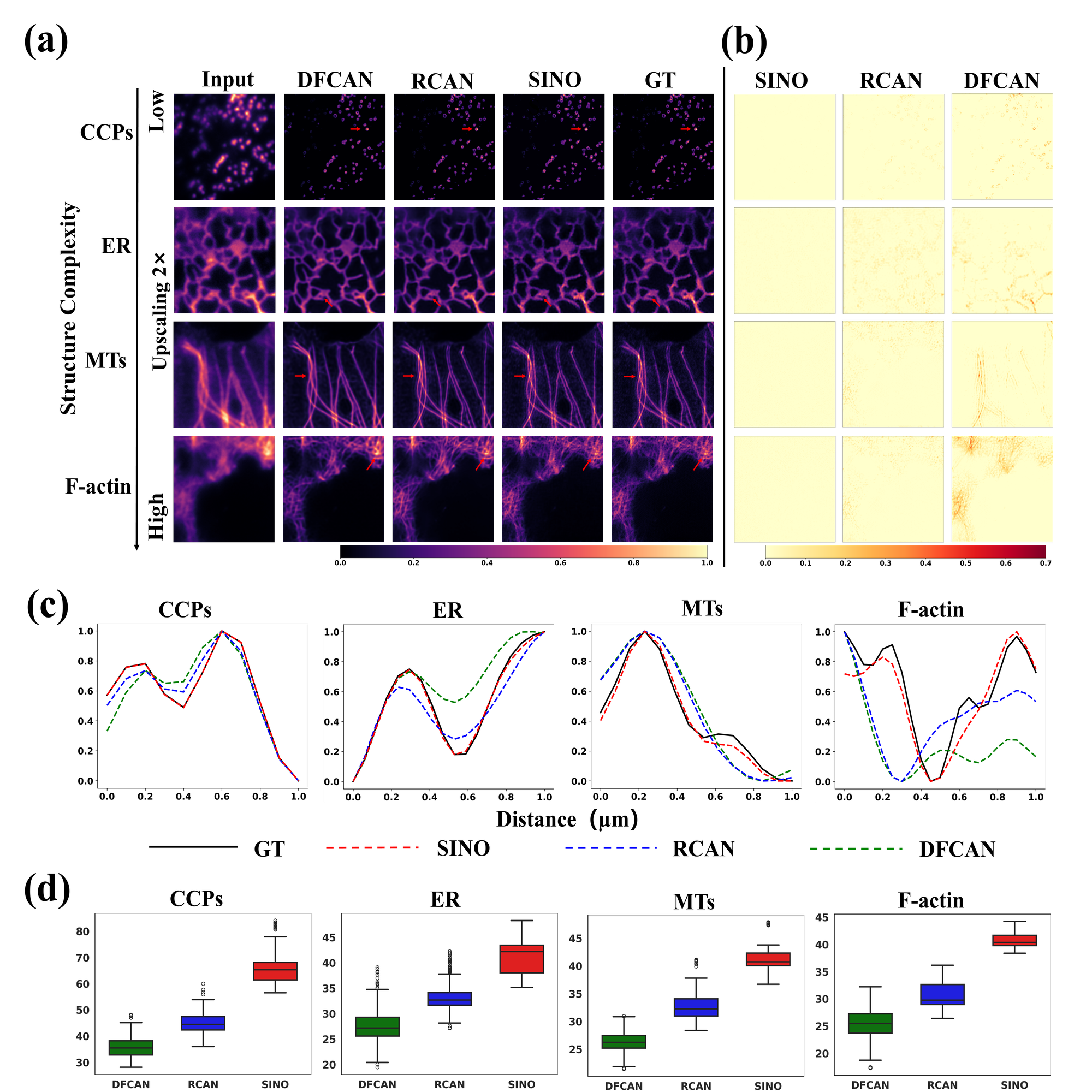}\\
    \vspace{0.1cm}
    \caption{\textbf{Quantitative and qualitative evaluation of $\times$2 super-resolution performance across multiple datasets.} (\textbf{a}) Representative visual comparisons of $\times$2 super-resolution results from different methods on datasets with increasing structural complexity from top to bottom. For each dataset, columns show the widefield (WF) input, predictions from DFCAN, RCAN, and our method, followed by the ground truth structured illumination microscopy (GT-SIM) image.
(\textbf{b}) Residual error maps for DFCAN, RCAN, and our method, computed as the absolute pixel-wise difference from the corresponding GT-SIM images.
(\textbf{c}) Intensity line profiles extracted along the white arrowheads indicated in the WF input images, comparing signal fidelity for DFCAN (green), RCAN (red), GT-SIM (white), and WF (blue) across all four datasets.
(\textbf{d}) Box plots showing the distribution of PSNR values for DFCAN, RCAN, and our method on all four datasets. Each box indicates the interquartile range, with the median marked by a horizontal line.}
    \label{fig4}
\end{figure}

\begin{table*}[htbp]
\centering
\caption{Quantitative comparison of the proposed \textbf{SINO} method and the baseline RCAN method on the CCPs, ER, MTs, and F-actin datasets for the super-resolution task. The reported metrics include MSE, PSNR, and SSIM. The best results are highlighted with a gray background.}
\begin{tabular}{c|cc|cc|cc}
\toprule
\multirow{2}{*}{\centering \textbf{Dataset}} & \multicolumn{2}{c|}{\textbf{MSE}} & \multicolumn{2}{c|}{\textbf{PSNR}} & \multicolumn{2}{c}{\textbf{SSIM}} \\
 & \textbf{RCAN} & \textbf{SINO} & \textbf{RCAN} & \textbf{SINO} & \textbf{RCAN} & \textbf{SINO} \\
\midrule
\textbf{CCPs} 
& 4.600e-05 & \cellcolor{gray!20}\textbf{1.000e-06} & 44.95 & \cellcolor{gray!20}\textbf{65.73} & 0.9966 & \cellcolor{gray!20}\textbf{0.9996} \\
\textbf{ER} 
& 5.750e-04 & \cellcolor{gray!20}\textbf{9.600e-05} & 33.23 & \cellcolor{gray!20}\textbf{41.35} & 0.9272 & \cellcolor{gray!20}\textbf{0.9654} \\
\textbf{MTs} 
& 6.070e-04 & \cellcolor{gray!20}\textbf{8.400e-05} & 32.70 & \cellcolor{gray!20}\textbf{41.11} & 0.9103 & \cellcolor{gray!20}\textbf{0.9689} \\
\textbf{F-actin} 
& 9.690e-04 & \cellcolor{gray!20}\textbf{8.800e-05} & 30.72 & \cellcolor{gray!20}\textbf{40.75} & 0.8596 & \cellcolor{gray!20}\textbf{0.9712} \\
\bottomrule
\end{tabular}
\label{tab:SRtask}
\end{table*}

\begin{figure}[h]
    \centering
    \includegraphics[width=0.9\linewidth]{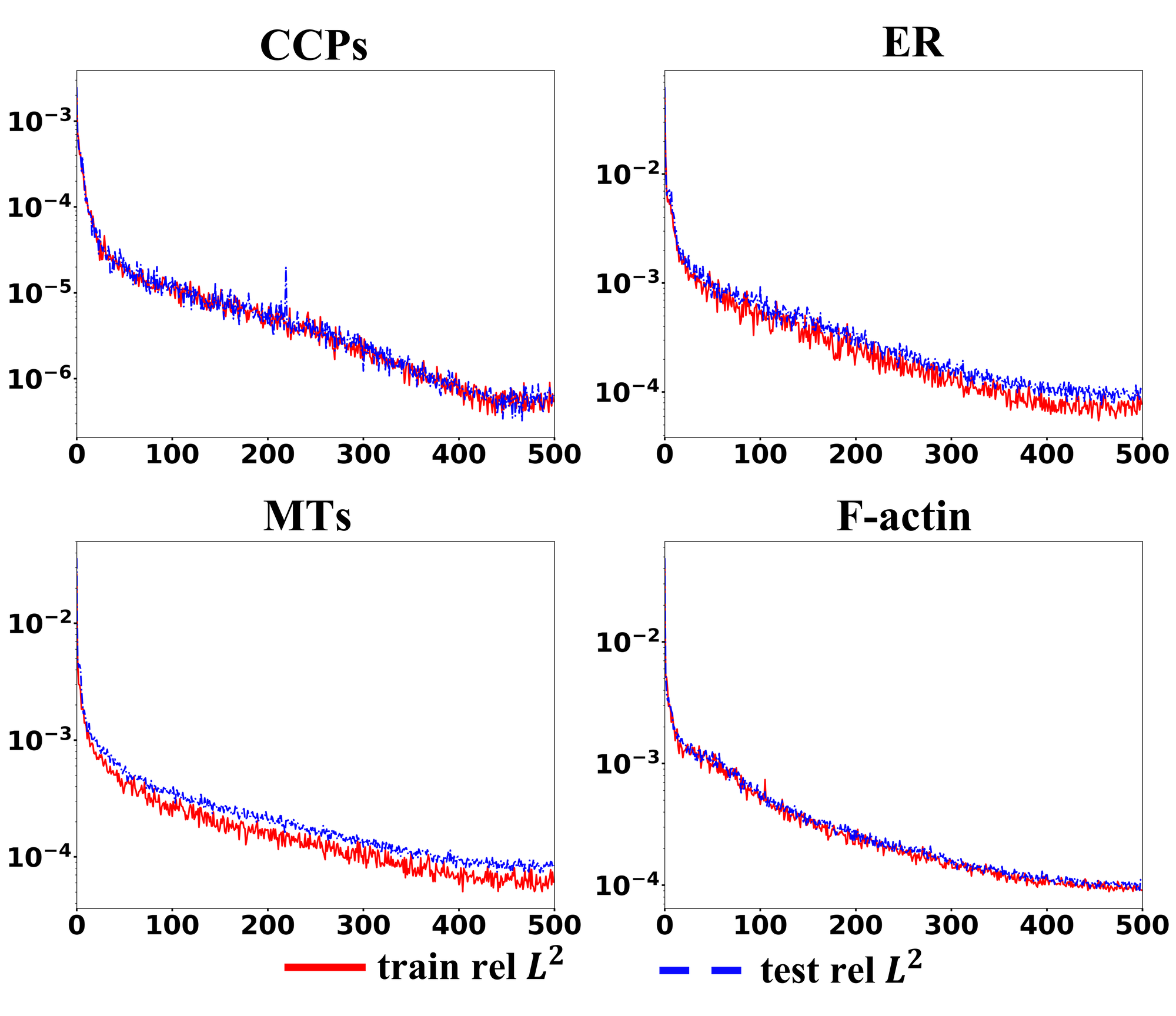}
    \caption{Visualization of training dynamics on four datasets. The x-axis represents the number of epoch and the y-axis the error in log scale.}
    \label{fig:Extend_figure1}
\end{figure}

We compared the performance of our proposed method against two widely used baseline models for SIM super-resolution tasks: the Deep Residual Channel Attention Network (RCAN) \cite{zhang2018image} and the Deep Fourier Channel Attention Network (DFCAN) \cite{qiao2021evaluation}. 
As shown in Figure~\ref{fig4}, our method achieved reconstruction results highly consistent with the Ground Truth (GT) high-resolution images across all four datasets. Residual maps further confirmed that the reconstruction errors contain only minimal structural artifacts, indicating that high-precision neural operators play a critical role in accurately reconstructing fine structures. 
Furthermore, by comparing the intensity line profiles extracted along the red arrows in Figure~\ref{fig4}(c), it can be observed that our method outperforms the two baseline models in preserving edge sharpness and local contrast. In addition, the box plots of the peak signal-to-noise ratio (PSNR) for CCP and ER reconstructions in Figure~\ref{fig4}(d) demonstrate that our method significantly surpasses the baseline approaches in both reconstruction accuracy and stability.
In particular, the PSNR comparison results for CCPs and ER reveal that the fitting accuracy of the neural network decreases when the training datasets have higher resolutions. 
We analyzed the training and validation dynamics of the reconstruction models across four datasets (shown as Figure~\ref{fig:Extend_figure1}).
As the proportion of high-frequency components in the data increases, the fitting accuracy of neural network models progressively declines. Owing to its enhanced precision, particularly in capturing high-frequency details, our method achieves Mean Squared Error (MSE) values that are two orders of magnitude lower than DFCAN and one order of magnitude lower than RCAN (shown as table~\ref{tab:SRtask}).

\subsection{Application on Weather Forecasting}
Weather forecasting relies on intricate mathematical and dynamic models that integrate multidisciplinary knowledge, including atmospheric physics, chemistry, and fluid dynamics. At the heart of the prediction system are numerical weather prediction (NWP) models, which are grounded in fundamental fluid dynamics and thermodynamics principles and leverage advanced computer simulation technologies to predict atmospheric conditions. Recently, artificial intelligence-driven weather forecasting methods have made significant strides, primarily utilizing large foundational models that exploit massive datasets and robust computing resources \cite{chen2023fuxi, bi2023accurate}. However, these complex systems often demand substantial infrastructure investments, limiting their broader application. Recent studies \cite{kurth2023fourcastnet, lin2023spherical} have introduced a variety of efficient and data-conserving approaches that require minimal input variables, sometimes relying on a single key atmospheric parameter, to achieve high-accuracy short-term predictions. Yet, enhancing the accuracy of medium-to-long-range forecasts remains one of the most cutting-edge and challenging issues in numerical weather prediction.

\begin{figure}[htp!]
\centering
\includegraphics[width=1\linewidth]{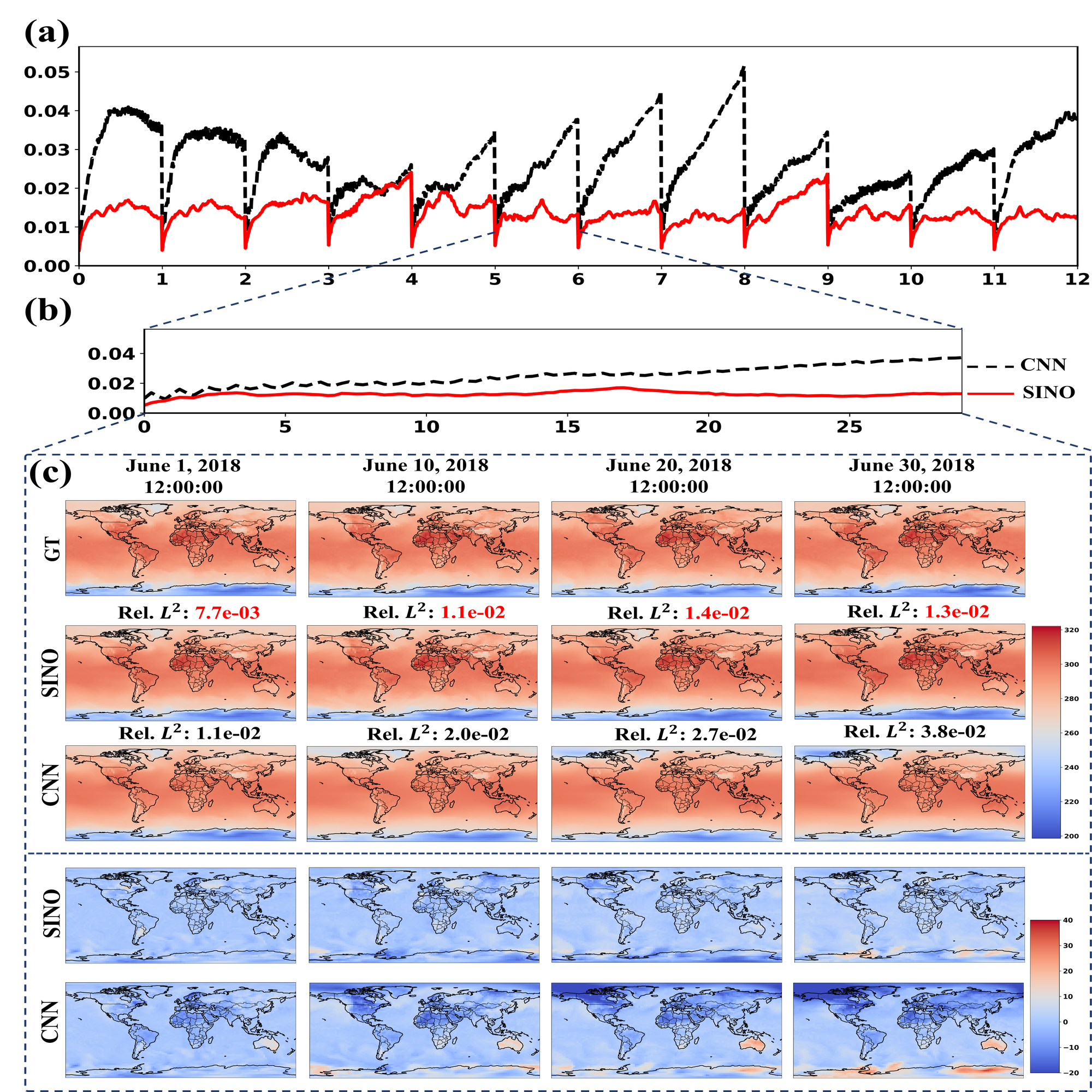}\\
    \vspace{0.3cm}
    \caption{\textbf{The comparison of Weather prediction results.}
    (a) Monthly prediction loss over the year 2018 comparing the proposed surrogate model (red) with the baseline model (black); (b) prediction loss curves for June; (c) visualization of selected time steps in June, where the first row shows the ground truth, the second row the predictions from SINO, the third row the predictions from the baseline model, and the fourth and fifth rows the residuals with respect to the ground truth for SINO and the baseline, respectively.}
    \label{fig:figure5}
\end{figure}

The numerical weather prediction (NWP) problem can be mathematically formulated as an initial value problem governed by a system of partial differential equations (PDEs). We begin with the thermodynamic energy equation, which is derived from the first law of thermodynamics and forms an intrinsically coupled system with the momentum, continuity, and equation of state that describe atmospheric dynamic processes. Under the pressure coordinate system and assuming an ideal gas, the evolution of the temperature field is governed by the following equation:
\begin{equation}
c_p\left(\frac{\partial \boldsymbol{T}}{\partial t} + \mathbf{v} \cdot \nabla \boldsymbol{T} + \omega \frac{\partial \boldsymbol{T}}{\partial p}\right) - \frac{R \boldsymbol{T}}{p} \omega = Q,
\label{eq:temperaturepde}
\end{equation}
where $\boldsymbol{T}$ denotes temperature (K), $\mathbf{v}=(u,v)$ is the horizontal wind velocity (m/s), $\omega = Dp/Dt$ is the vertical velocity in pressure coordinates (Pa/s), $c_p$ is the specific heat capacity at constant pressure (J/(kg·K)), $R$ is the gas constant for dry air (J/(kg·K)), $p$ is pressure (Pa), and $Q$ represents diabatic heating due to radiation, latent heat release, or turbulent fluxes (J/(kg·s)). This formulation highlights the interplay between temperature and other prognostic variables such as velocity, pressure, and diabatic processes. The temperature evolution Eq.~\eqref{eq:temperaturepde} can be discretized to yield a stepwise recursive relation for the temperature field:
\begin{equation*}
\mathcal{G}: \boldsymbol{T}(\mathbf{x},t) \mapsto \boldsymbol{T}(\mathbf{x},t+\Delta t),
\end{equation*}
which serves as the foundation for forward integration of prognostic temperature variables in numerical weather prediction models. To construct a data-driven forecasting model, we approximate the evolution operator $\mathcal{G}$ using a neural operator. 

We use the publicly available \textbf{WeatherBench} \cite{rasp2020weatherbench}, which provides a comprehensive set of meteorological parameters across time. We focus on the near-surface air temperature at 2 meters (T2m) as our target variable, a widely recognized metric for assessing surface climate variability and thermally significant conditions relevant to human activities. Our analysis uses hourly T2m data with a temporal resolution of $\Delta t = 6$ hours. Data from the period 2010–2016 are used for training, observations from 2017 are used for validation, and data from 2018 are reserved for inference testing. The data are regridded to a spatial resolution of $128 \times 64$, corresponding to a uniform horizontal grid spacing of $2.825^\circ$ in both longitude and latitude.

We select the CNN model from WeatherBench \cite{rasp2020weatherbench} as the baseline method and conduct a numerical visualization comparison with our proposed method based on autoregressive inference over a one-month horizon, starting from the initial data of each month. Figure~\ref{fig:figure5}(a) presents a comparison between the predictions and ground truth obtained from both methods using the surrogate model for 12 consecutive months of autoregressive inference, along with the corresponding relative mean squared error curves. The analysis results indicate that the CNN method exhibits a significant error accumulation effect during long-term forecasting, with pronounced error growth between May and September. In contrast, our method does not show significant error accumulation throughout the entire forecasting period. Figure~\ref{fig:figure5}(b) further displays the error progression curve for June, demonstrating that our method not only achieves a significantly lower initial error than the CNN baseline but also maintains more stable error characteristics throughout the inference process. Figure~\ref{fig:figure5}(c) visualizes the predicted global temperature fields and corresponding residual maps at 12:00 UTC on June 1, 10, 20, and 30, with the relative L$^2$ error values annotated above each subplot. Quantitative analysis reveals that our method reduces the prediction error by approximately 30\% compared to the CNN method at the initial stage, and this performance advantage continues to expand over time, exceeding a 60\% error reduction by the end of June.

\section{Conclusion and Future Works} \label{sec4}
In this study, we presented the Starter-Iterator Neural Operator (SINO), a framework grounded in the principles of classical iterative analysis. The proposed operator architecture integrates two complementary functional components: a starter module operating in the frequency domain to provide a high-fidelity estimation of the target solution's low-frequency spectrum, and an iterative module designed to refine high-frequency structural details through learnable parametric recursions. Consistent with the spectral properties of iterative solvers, the starter module effectively suppresses the low-frequency components of the initial error, thereby accelerating the contraction rate of the iterator and enhancing overall approximation accuracy. The primary contribution of SINO lies in the seamless integration of these components into an end-to-end differentiable, adaptive numerical solver capable of addressing both forward and inverse problems. By unifying initialization and refinement within a purely neural framework, SINO overcomes the modular decoupling inherent in traditional iterative methods, significantly boosting computational efficiency without sacrificing generalization. Our empirical evaluations across diverse scientific computing tasks demonstrate that SINO consistently outperforms existing benchmarks. Furthermore, numerical analysis confirms that the synergy between low-frequency initialization and parameterized iteration effectively captures multiscale features, particularly ensuring robust stability in autoregressive temporal tasks.

We demonstrate SINO's applicability across various scientific computing tasks, including forward prediction and inverse inference. Our experimental results consistently show that SINO outperforms existing baselines in accuracy and efficiency, underscoring the potential of operator learning frameworks to advance scientific computing. In addition, our numerical studies show that initializing with low-frequency information, combined with a parameterized iterative scheme, effectively extracts high-frequency features. In temporal tasks, SINO demonstrates exceptional stability as an autoregressive operator during iterative computation. 

{Our analysis identifies three critical factors influencing SINO's performance: first, at the data level, data fidelity is paramount, as measurement errors and high structural complexity in acquisition processes—particularly in super-resolution tasks—correlate with larger noise levels that exacerbate convergence errors as shown in Fig.~\ref{fig:Extend_figure1}; second, at the computational modeling level, the learning precision is intrinsically linked to system conditioning, where the ill-conditioning of the underlying system, such as in Navier-Stokes simulations with a decreasing viscosity coefficient $\nu$, significantly heightens the difficulty of operator approximation as detailed in Table~\ref{tab:error_comparison}; and third, the model-physics gap arising from discrepancies between the neural operator and the latent physical system introduces modeling biases, where the simplification of complex inter-variable interactions can lead to cumulative errors during long-term simulations.}

Looking forward, we aim to augment neural operator performance through three primary avenues. First, at the data level, we will employ higher-order, structure-preserving numerical methods to generate high-fidelity datasets. Second, we plan to develop parameterized iterators based on high-order iterative schemes (e.g., Krylov-subspace-inspired architectures) to improve accuracy in ill-conditioned regimes. Finally, we intend to extend the SINO framework to characterize complex multi-variable interactions, thereby enhancing the representational capacity of neural operators for sophisticated multi-physics systems.

\section*{Acknowledgment}
We would like to thank the anonymous reviewers for their constructive suggestions, which helped us improve the theoretical analysis. The work is supported by the National Natural Science Foundation of China (NSFC T2541053). 

\appendix
\section{Theoretical evaluation}\label{secA1}
\subsection{Proof of Theorem \ref{thm:fsno_approx}}\label{Them2}
We first establish the following supporting definitions and lemmas to facilitate the proof of the Universal Approximation Theorem \ref{thm:fsno_approx}.

{
\begin{definition}[Fourier Projection and Truncation Operator]
Let $\Omega \subset \mathbb{R}^d$ be a bounded Lipschitz domain and $\boldsymbol{u} \in L^2(\Omega)$ be represented by its Fourier series $$\boldsymbol{u}(x) = \sum_{k \in \mathbb{Z}^d} \widehat{\boldsymbol{u}}_k e^{i \langle k, x \rangle}.$$ The \textbf{Fourier projection operator} $\mathcal{P}_K: L^2(\Omega) \mapsto L^2(\Omega)$ is defined as:
\begin{equation*}
    \mathcal{P}_K \boldsymbol{u} := \sum_{|k|_\infty < K} \widehat{\boldsymbol{u}}_k e^{i \langle k, x \rangle},
\end{equation*}
where $K \in \mathbb{N}^{+}$ is the frequency truncation threshold. Given a continuous operator $\mathcal{G}: H^s(\Omega) \mapsto L^2(\Omega)$, its \textbf{Fourier truncation}, denoted by $\mathcal{G}_{K}$, is defined as:
\begin{equation} \label{eq:operator_proj}
    \mathcal{G}_{K}(\boldsymbol{f}):=\mathcal{P}_K\mathcal{G}(\mathcal{P}_K\boldsymbol{f}).
\end{equation}
\end{definition}

\begin{definition}[Contraction Operator]
A linear operator $\mathcal{O}: \mathcal{H} \mapsto \mathcal{H}$ on a Hilbert space $\mathcal{H}$ is a \textbf{contraction} if there exists a constant $q \in (0, 1)$ such that:
\begin{equation*}
    \| \mathcal{O} v \|_\mathcal{H} \le q \| v \|_\mathcal{H}, \quad \forall v \in \mathcal{H}.
\end{equation*}
In the context of iterative schemes, $q$ governs the geometric convergence rate of the error propagation.
\end{definition}


\begin{lemma}
    [Approximation via fixed-point iteration]
\label{lem:freq_trunc_init_appro}
Let $\mathcal{A}: L^2(\Omega) \mapsto H^{s}(\Omega)$
be a bounded invertible linear operatorsatisfying the equation $\mathcal{A} \boldsymbol{u} = \boldsymbol{f}$ for $\boldsymbol{u} \in L^2(\Omega)$ and $\boldsymbol{f} \in H^s(\Omega)$. 
Assume there exists a bounded linear preconditioning operator \(\mathcal B: H^s(\Omega) \to L^2(\Omega) \) 
such that the error transition operator $\mathcal{E}=\mathrm{Id}_{\mathcal Y}-\mathcal{B}\mathcal{A}$ is a contraction on $L^2(\Omega)$ with a contraction constant $q \in (0, 1)$. 
For a given frequency truncation $K \in \mathbb{N}^+$, let the initial guess be $\boldsymbol{u}^{(0)} = \mathcal{G}_K \boldsymbol{f}$ and the sequence $\{\boldsymbol{u}^{(n)}\}_{n \in \mathbb{N}}$ be generated by the iterative scheme $\boldsymbol{u}^{(n+1)} = \mathcal{I}(\boldsymbol{u}^{(n)}, \boldsymbol{f})$ as defined in Eq.~\ref{eq:iter_seq}. Then, for any $\epsilon > 0$, there exists an $N \in \mathbb{N}$ such that
\[
\|\boldsymbol{u} - \boldsymbol{u}^{(N)}\|_{L^2} < \epsilon.
\]
\end{lemma}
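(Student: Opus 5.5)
The argument is the classical contraction estimate for stationary iterations. The one point that needs care is showing that the initial error produced by the truncated starter is finite.

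Since $\mathcal{A}$ is invertible with $\mathcal{A}\boldsymbol{u}=\boldsymbol{f}$, we have $\boldsymbol{u}=\mathcal{G}\boldsymbol{f}$ with $\mathcal{G}=\mathcal{A}^{-1}$. This $\mathcal{G}$ is a bounded linear operator $H^s(\Omega)\to L^2(\Omega)$ by the bounded inverse theorem, because $\mathcal{A}$ is a bounded bijection between Banach spaces.

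The first step is to show that $\boldsymbol{u}$ is a fixed point of the iteration map $\mathcal{I}(\cdot,\boldsymbol{f})$ from Eq.~\eqref{eq:iter_seq}. This holds because $\boldsymbol{f}-\mathcal{A}\boldsymbol{u}=0$, so $\mathcal{I}(\boldsymbol{u},\boldsymbol{f})=\boldsymbol{u}$.

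The second step is the error recursion. Subtracting the fixed-point identity from the update rule gives
\[
\boldsymbol{e}^{(n+1)}=\boldsymbol{u}-\boldsymbol{u}^{(n+1)}=(\boldsymbol{u}-\boldsymbol{u}^{(n)})-\mathcal{B}\mathcal{A}(\boldsymbol{u}-\boldsymbol{u}^{(n)})=\mathcal{E}\boldsymbol{e}^{(n)}.
\]
By induction, $\boldsymbol{e}^{(n)}=\mathcal{E}^n\boldsymbol{e}^{(0)}$. The contraction hypothesis then yields $\|\boldsymbol{e}^{(n)}\|_{L^2}\le q^n\|\boldsymbol{e}^{(0)}\|_{L^2}$.

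The third step, and the only place where the starter matters, is to bound the initial error. We write
\[
\boldsymbol{e}^{(0)}=\mathcal{G}\boldsymbol{f}-\mathcal{P}_K\mathcal{G}(\mathcal{P}_K\boldsymbol{f}).
\]
Because $\mathcal{P}_K$ is an orthogonal projection on $L^2$ and $\|\mathcal{P}_K\boldsymbol{f}\|_{H^s}\le\|\boldsymbol{f}\|_{H^s}$, we get
\[
\|\boldsymbol{e}^{(0)}\|_{L^2}\le\|\mathcal{G}\boldsymbol{f}\|_{L^2}+\|\mathcal{G}\|\,\|\boldsymbol{f}\|_{H^s}\le 2\|\mathcal{G}\|\,\|\boldsymbol{f}\|_{H^s}<\infty.
\]
A sharper estimate is available by splitting
\[
\boldsymbol{e}^{(0)}=(\mathrm{Id}-\mathcal{P}_K)\mathcal{G}\boldsymbol{f}+\mathcal{P}_K\mathcal{G}(\mathrm{Id}-\mathcal{P}_K)\boldsymbol{f}.
\]
Both terms tend to zero as $K\to\infty$: the first by Parseval, the second because $\mathcal{G}$ is continuous and $\mathcal{P}_K\boldsymbol{f}\to\boldsymbol{f}$ in $H^s$. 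Finiteness of $\|\boldsymbol{e}^{(0)}\|_{L^2}$ is all the lemma requires. The decay in $K$ explains why a good starter reduces the number of iterations needed, and it will be reused for the uniform bound in Theorem~\ref{thm:fsno_approx}.

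Finally, given $\epsilon>0$, we may assume $\|\boldsymbol{e}^{(0)}\|_{L^2}>0$, since otherwise $N=0$ already works. Choose
\[
N>\log\bigl(\epsilon/\|\boldsymbol{e}^{(0)}\|_{L^2}\bigr)/\log q.
\]
Then $q^N\|\boldsymbol{e}^{(0)}\|_{L^2}<\epsilon$, which gives the claim.

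The main obstacle I expect is the domain mismatch rather than the contraction argument itself. The Fourier truncation $\mathcal{P}_K$ is defined on a "bounded Lipschitz domain" but uses a periodic basis, so the estimate $\|\mathcal{P}_K\boldsymbol{f}\|_{H^s}\le\|\boldsymbol{f}\|_{H^s}$ needs $\Omega=\mathbb{T}^d$. I would work on the torus, as in the statement of Theorem~\ref{thm:fsno_approx}. If one insists on general $\Omega$, I would only use that $\mathcal{P}_K$ is $L^2$-bounded, which still makes $\boldsymbol{e}^{(0)}$ finite provided $\mathcal{P}_K\boldsymbol{f}\in H^s$.
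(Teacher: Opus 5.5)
Your proof is correct and follows the paper's argument: you use the fixed-point identity to get $\boldsymbol{e}^{(n)}=\mathcal{E}^n\boldsymbol{e}^{(0)}$, bound this by $q^N\|\boldsymbol{e}^{(0)}\|_{L^2}$ via the contraction, and choose $N$ logarithmically in $\epsilon/\|\boldsymbol{e}^{(0)}\|_{L^2}$. Your explicit bound $\|\boldsymbol{e}^{(0)}\|_{L^2}\le 2\|\mathcal{G}\|\,\|\boldsymbol{f}\|_{H^s}$ and your separate handling of $\boldsymbol{e}^{(0)}=0$ (where the paper's $\log_q$ choice is undefined) are minor refinements; finiteness of $\boldsymbol{e}^{(0)}$ is in fact immediate, since both $\boldsymbol{u}$ and $\mathcal{G}_K\boldsymbol{f}$ lie in $L^2$.
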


\begin{proof}
Let $e^{(n)} := \boldsymbol{u} - \boldsymbol{u}^{(n)}$ denote the error at the $n$-th iteration. From the iterative relation and the identity $\mathcal{B}\boldsymbol{f} = \mathcal{B}\mathcal{A}\boldsymbol{u}$, we have:
\begin{equation*}
\boldsymbol{e}^{(n+1)} = \boldsymbol{u} - \left( \mathcal{E}\boldsymbol{u}^{(n)} + \mathcal{B}\boldsymbol{f} \right) =  (\mathrm{Id}_{\mathcal Y} - \mathcal{B}\mathcal{A})(\boldsymbol{u} - \boldsymbol{u}^{(n)})= \mathcal{E}\boldsymbol{e}^{(n)}.
\end{equation*}
By recursive application, it follows that
\[
\boldsymbol{e}^{(N)} = (\mathrm{Id}_{\mathcal Y} - \mathcal{B}\mathcal{A})^N \boldsymbol{e}^{(0)}.
\]
Given that $\mathcal{E}$ is a contraction, we obtain the bound:
\[
\|\boldsymbol{e}^{(N)}\|_{L^2} \le q^N\|\boldsymbol{e}^{(0)}\|_{L^2}.
\]
The initial error $\boldsymbol{e}^{(0)} = \boldsymbol{u} - \mathcal{G}_K \boldsymbol{f}$ is bounded, as $\boldsymbol{u} \in L^2(\Omega)$ and $\mathcal{G}_K$ is a bounded operator from $H^s(\Omega)$ to $L^2(\Omega)$. Since $q \in (0, 1)$, $\lim_{N \to \infty} q^N = 0$. Consequently, for any $\epsilon > 0$, we can choose $N > \log_q (\epsilon / \|\boldsymbol{e}^{(0)}\|_{L^2})$ such that $q^N \|\boldsymbol{e}^{(0)}\|_{L^2} < \epsilon$, which completes the proof.
\end{proof}

Note that the contraction property $q < 1$ is consistent with the assumptions in deep equilibrium models \cite{gilton2021deep, yang2024low}. This property ensures that the iterative refinement is inherently stable and convergent within the continuous functional space, thereby bypassing any dependency on finite-dimensional algebraic properties. Furthermore, the condition $q < 1$ implies that the error sequence is uniformly bounded by its initial state:
\[
    \|\boldsymbol e^{(n)}\|_{L^2}
    \le q^n\|\boldsymbol e^{(0)}\|_{L^2}
    \le \|\boldsymbol e^{(0)}\|_{L^2},
    \qquad \forall n \in  \mathbb N^+.
\]
By the triangle inequality, the iterative sequence $\{\boldsymbol{u}^{(n)}\}$ is also uniformly bounded:
\[
    \|\boldsymbol u^{(n)}\|_{L^2}
    \le
    \|\boldsymbol u\|_{L^2}
    +
    \|\boldsymbol e^{(n)}\|_{L^2}
    \le
    \|\boldsymbol u\|_{L^2}
    +
    \|\boldsymbol e^{(0)}\|_{L^2},
    \qquad \forall n \in  \mathbb N^+.
\]

\begin{lemma}[Universal Approximation Property for Iterator Operator]
\label{lem:univ_iter}
Let the iteration operator $\mathcal{I}^{N}$ satisfy the conditions in Lemma~\ref{lem:freq_trunc_init_appro}. For any $\epsilon > 0$ and $N \in \mathbb{N}^+$, there exist fully connected neural networks (FCNs) $\mathcal{A}_{\theta}^{(n)}$ and $\mathcal{B}_{\theta}^{(n)}$ that construct a neural iterator $\mathcal{I}_{\theta}^{N}: L^2(\Omega) \to L^2(\Omega)$ such that:
    \begin{equation*}
        \sup_{\boldsymbol{f}\in H^s(\Omega)} \|\mathcal{I}^{N}(\mathcal{G}_K\boldsymbol{f}; \boldsymbol{f} ) - \mathcal{I}_{\theta}^{N}(\mathcal{G}_K\boldsymbol{f}; \boldsymbol{f}) \| < \epsilon
    \end{equation*}
where $\mathcal{K}$ is a compact subset of $H^s(\Omega)$. The neural iterator $\mathcal{I}_{\theta}^{N}$ is defined by the composition of $N$ unrolled steps:
    \begin{equation*}
        \mathcal{I}_{\theta}^{N}(\mathcal{G}_K\boldsymbol{f}; \boldsymbol{f}): = \prod_{i=0}^{N-1}(\mathrm{Id}_{\mathcal Y}-\mathcal{B}_{\theta}^{(i)}\mathcal{A}_{\theta}^{(i)}) \mathcal{G}_K\boldsymbol{f} + \sum_{j=0}^{N-1}\prod_{i=0}^j( \mathrm{Id}_{\mathcal Y}-\mathcal{B}_{\theta}^{(i)}\mathcal{A}_{\theta}^{(i)}) B_{\theta}^{(0)} \boldsymbol{f}.
    \end{equation*}
This is equivalent to the recursive scheme:
\[\boldsymbol{u}_{\theta}^{(n+1)} = \boldsymbol{u}_{\theta}^{(n)} + \mathcal{B}_{\theta}^{(n)}(\boldsymbol{f}- \mathcal{A}_{\theta}^{(n)} \boldsymbol{u}_{\theta}^{(n)}), \quad n = 0, \dots, N-1.\]
\end{lemma}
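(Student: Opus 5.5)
We prove Lemma~\ref{lem:univ_iter} by induction on the unrolled step count, using a perturbation argument for the recursion. We need a uniform estimate that propagates the per-step approximation errors of $\mathcal{A}_\theta^{(n)}$ and $\mathcal{B}_\theta^{(n)}$ through $N$ steps. The supremum will be taken over the compact set $\mathcal{K}\subset H^s(\Omega)$ (the $\sup$ over all of $H^s$ in the display should read $\mathcal{K}$). Uniform approximation by FCNs is only available on compact sets, and by linearity the true iterates grow unboundedly on all of $H^s$.

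\textbf{Step 1: the true iterates lie in a compact set.} Write $\boldsymbol{u}^{(n+1)}=\mathcal{E}\boldsymbol{u}^{(n)}+\mathcal{B}\boldsymbol{f}$ with $\boldsymbol{u}^{(0)}=\mathcal{G}_K\boldsymbol{f}$. Each map $\boldsymbol{f}\mapsto\boldsymbol{u}^{(n)}$ is continuous from $H^s$ to $L^2$: $\mathcal{G}_K$ is continuous as a composition of $\mathcal{P}_K$ with the continuous $\mathcal{G}$, and $\mathcal{E},\mathcal{B}$ are bounded. Hence $U_n:=\{\boldsymbol{u}^{(n)}(\boldsymbol{f}):\boldsymbol{f}\in\mathcal{K}\}$ is compact in $L^2$. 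By the uniform bound after Lemma~\ref{lem:freq_trunc_init_appro}, all $U_n$ lie in a single ball of radius $M$. Likewise $\mathcal{A}U_n\subset H^s$ is compact.

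\textbf{Step 2: approximate the bounded linear maps on compacta.} Apply the universal approximation theorem for neural operators (e.g., via a finite-dimensional encoder/decoder reduction in the style of Kovachki et al.). This gives FCN-based operators with $\sup_{v\in V_n}\|\mathcal{A}v-\mathcal{A}_\theta^{(n)}v\|_{H^s}<\delta$ on a compact neighbourhood $V_n\supset U_n$. It also gives $\sup_{w\in W_n}\|\mathcal{B}w-\mathcal{B}_\theta^{(n)}w\|_{L^2}<\delta$ on a compact set $W_n$ containing all residuals $\boldsymbol{f}-\mathcal{A}_\theta^{(n)}v$ for $v\in V_n$, $\boldsymbol{f}\in\mathcal{K}$.

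\textbf{Step 3: propagate the errors by induction.} Set $d_n:=\sup_{\mathcal{K}}\|\boldsymbol{u}^{(n)}-\boldsymbol{u}_\theta^{(n)}\|_{L^2}$, with $d_0=0$. Then
\[
\boldsymbol{u}^{(n+1)}-\boldsymbol{u}_\theta^{(n+1)}=(\boldsymbol{u}^{(n)}-\boldsymbol{u}_\theta^{(n)})+\bigl[\mathcal{B}(\boldsymbol{f}-\mathcal{A}\boldsymbol{u}^{(n)})-\mathcal{B}_\theta^{(n)}(\boldsymbol{f}-\mathcal{A}_\theta^{(n)}\boldsymbol{u}_\theta^{(n)})\bigr].
\]
Split the bracket by adding and subtracting $\mathcal{B}(\boldsymbol{f}-\mathcal{A}_\theta^{(n)}\boldsymbol{u}_\theta^{(n)})$ and $\mathcal{A}\boldsymbol{u}_\theta^{(n)}$. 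This yields
\[
d_{n+1}\le(1+\|\mathcal{B}\|\|\mathcal{A}\|)\,d_n+\|\mathcal{B}\|\delta+\delta.
\]
To use these bounds, $\boldsymbol{u}_\theta^{(n)}$ must lie in $V_n$, which holds as long as $d_n$ is smaller than the neighbourhood width. The recursion gives $d_N\le C(N,\|\mathcal{A}\|,\|\mathcal{B}\|)\,\delta$, so choosing $\delta$ small enough gives $d_N<\epsilon$. The closed-form product/sum expression in the statement then follows by unrolling the recursion, as in Eq.~\eqref{eq:iterator}.

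The main obstacle is the bookkeeping in Steps 2--3. The neural iterates are not the true iterates, so the compact sets on which the networks must be accurate have to be chosen before the networks themselves. We handle this by taking closed $\eta$-neighbourhoods of $U_n$ inside finite-dimensional encodings, or by replacing neighbourhoods with the compact images under a fixed finite-rank projection. We then fix $\delta$ recursively from step $N$ backwards. Continuity of $\mathcal{G}_\theta$, needed downstream in Theorem~\ref{thm:fsno_approx}, is automatic because the result is a finite composition of continuous networks and linear maps.
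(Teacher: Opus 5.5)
Your argument works once one technical point (below) is repaired, and it uses a genuinely different error decomposition from the paper's. The paper writes
\[
\boldsymbol e^{(n+1)}=(\mathrm{Id}_{\mathcal Y}-\mathcal B_\theta^{(n)}\mathcal A_\theta^{(n)})\boldsymbol e^{(n)}+(\mathcal B-\mathcal B_\theta^{(n)})(\boldsymbol f-\mathcal A\boldsymbol u^{(n)})+\mathcal B_\theta^{(n)}(\mathcal A_\theta^{(n)}-\mathcal A)\boldsymbol u^{(n)},
\]
so the \emph{network} error operator acts on the error, and the approximation defects are evaluated at the \emph{exact} iterates. It then asserts operator-norm bounds $\|\mathcal A-\mathcal A_\theta^{(n)}\|,\|\mathcal B-\mathcal B_\theta^{(n)}\|\le\delta$ via Chen--Chen. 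From these it concludes that $\mathrm{Id}_{\mathcal Y}-\mathcal B_\theta^{(n)}\mathcal A_\theta^{(n)}$ remains a contraction with constant $q_\theta<1$, giving the $N$-independent bound $2C\delta/(1-q_\theta)$.

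You do the mirror image: the exact bounded operators act on the error, and the defects are evaluated at the neural iterates and neural residuals. This buys you two things. You only need uniform approximation on compact sets, which is exactly what universal approximation theorems supply. You also never use linearity of $\mathcal A_\theta^{(n)},\mathcal B_\theta^{(n)}$. The paper's route needs a global operator-norm approximation that Chen--Chen does not provide and that is not meaningful for nonlinear FCN-based maps. Its algebra also silently uses $\mathcal B_\theta^{(n)}(\boldsymbol f-\mathcal A_\theta^{(n)}\boldsymbol u)=\mathcal B_\theta^{(n)}\boldsymbol f-\mathcal B_\theta^{(n)}\mathcal A_\theta^{(n)}\boldsymbol u$, and its supremum over all of $H^s$ is unattainable, as you correctly note.

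Your $N$-dependent constant is harmless because $N$ is fixed first, and it is even avoidable. Grouping $\boldsymbol e^{(n)}-\mathcal B\mathcal A\boldsymbol e^{(n)}=\mathcal E\boldsymbol e^{(n)}$ gives $d_{n+1}\le q\,d_n+(\|\mathcal B\|+1)\delta$, hence $d_N\le(\|\mathcal B\|+1)\delta/(1-q)$, using only the contraction of the exact $\mathcal E$. What your route does not give is contractivity of the neural iterator, which the paper reuses for term (III) in the proof of Theorem~\ref{thm:fsno_approx}. With your approach that term is handled instead by choosing the starter \emph{after} the iterator. One then uses uniform continuity of the fixed continuous map $\mathcal I_\theta^N$ near the compact set $\{(\mathcal G_K\boldsymbol f,\boldsymbol f):\boldsymbol f\in\mathcal K\}$.

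Two points in your write-up need repair.

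\textbf{Compact neighbourhoods.} A compact subset of $L^2$ has empty interior, so there is no ``compact neighbourhood'' $V_n\supset U_n$. Consequently the claim that $\boldsymbol u_\theta^{(n)}\in V_n$ once $d_n$ is below a neighbourhood width fails as stated, and your appeals to finite-dimensional encodings are vaguer than necessary. The clean fix is to choose the networks forward in $n$:
\begin{itemize}
\item Once steps $0,\dots,n-1$ are fixed, $V_n:=\{\boldsymbol u_\theta^{(n)}(\boldsymbol f):\boldsymbol f\in\mathcal K\}$ is compact, as a continuous image of $\mathcal K$. Take $\mathcal A_\theta^{(n)}$ to be $\delta$-accurate on $V_n$.
\item Then $W_n:=\{\boldsymbol f-\mathcal A_\theta^{(n)}\boldsymbol u_\theta^{(n)}(\boldsymbol f):\boldsymbol f\in\mathcal K\}$ is compact in $H^s$. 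Take $\mathcal B_\theta^{(n)}$ to be $\delta$-accurate on $W_n$.
\end{itemize}
Your recursion then holds with $\delta$ fixed in advance and no smallness condition on $d_n$, so there is no circularity.

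\textbf{The closed form.} The product/sum formula in the statement does not follow from unrolling the recursion. Already for $N=1$ and linear networks, it gives $\mathcal E_\theta^{(0)}\mathcal G_K\boldsymbol f+\mathcal E_\theta^{(0)}\mathcal B_\theta^{(0)}\boldsymbol f$, whereas the recursion gives $\mathcal E_\theta^{(0)}\mathcal G_K\boldsymbol f+\mathcal B_\theta^{(0)}\boldsymbol f$. Moreover, any such formula presupposes linear networks. Take the recursion as the definition of $\mathcal I_\theta^N$ rather than asserting the equivalence.
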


\begin{proof}
Let $\boldsymbol{u}^{(n)}$ be the exact iteration defined by $\boldsymbol{u}^{(n+1)} = \boldsymbol{u}^{(n)} + \mathcal{B}(\boldsymbol{f} - \mathcal{A} \boldsymbol{u}^{(n)})$ and $\boldsymbol{u}_\theta^{(n)}$ be the neural approximation. Define the error $\boldsymbol{e}^{(n)} := \boldsymbol{u}^{(n)} - \boldsymbol{u}_\theta^{(n)}$. The error evolution is given by:
\begin{align*}
\boldsymbol{e}^{(n+1)} &= \boldsymbol{u}^{(n+1)} - \boldsymbol{u}_\theta^{(n+1)} \\
&= \boldsymbol{e}^{(n)} + \mathcal{B}(\boldsymbol{f} - \mathcal{A} \boldsymbol{u}^{(n)}) - \mathcal{B}_{\theta}^{(n)}(\boldsymbol{f} - \mathcal{A}_{\theta}^{(n)} \boldsymbol{u}_\theta^{(n)}) \\
&= e^{(n)} + (\mathcal{B} - \mathcal{B}_{\theta}^{(n)})\boldsymbol{f} + (\mathcal{B}_{\theta}^{(n)}\mathcal{A}_{\theta}^{(n)} \boldsymbol{u}_{\theta}^{(n)} - \mathcal{B}\mathcal{A} \boldsymbol{u}^{(n)})\\
&= e^{(n)} + (\mathcal{B} - \mathcal{B}_{\theta_n})\boldsymbol{f} + (\mathcal{B}_{\theta}^{(n)}\mathcal{A}_{\theta}^{(n)}\boldsymbol{u}_{\theta}^{(n)}- \mathcal{B}_{\theta}^{(n)}\mathcal{A} \boldsymbol{u}^{(n)} + \mathcal{B}_{\theta}^{(n)}\mathcal{A}\boldsymbol{u}^{(n)} - \mathcal{B}\mathcal{A} \boldsymbol{u}^{(n)})\\
&= (\mathrm{Id}_{\mathcal Y} - \mathcal{B}_{\theta}^{(n)} \mathcal{A}_{\theta}^{(n)})\boldsymbol{e}^{(n)} + (\mathcal{B} - \mathcal{B}_{\theta}^{(n)})(\boldsymbol{f} - \mathcal{A} \boldsymbol{u}^{(n)}) + \mathcal{B}_{\theta}^{(n)}(\mathcal{A}_{\theta}^{(n)} - \mathcal{A})\boldsymbol{u}^{(n)}.
\end{align*}
Taking the $L^2$ norm on both sides and applying the triangle inequality:
\begin{align}
\|e^{(n+1)}\|
\le
\|\mathrm{Id}_{\mathcal Y} - \mathcal{B}_{\theta}^{(n)} \mathcal{A}_{\theta}^{(n)}\| \|e^{(n)}\|
+ \|\mathcal{B} - \mathcal{B}_{\theta}^{(n)}\| \|\boldsymbol{f} - \mathcal{A} \boldsymbol{u}^{(n)}\|
+ \|\mathcal{B}_{\theta}^{(n)}\| \|\mathcal{A}_{\theta}^{(n)} - \mathcal{A}\| \|\boldsymbol{u}^{(n)}\|.
\label{eq:error_norm_bound}
\end{align}
By the Universal Approximation Theorem for operators on compact sets \cite{chen1995universal}, for any $\delta > 0$, there exist neural networks $\mathcal{A}_{\theta}^{(n)}$ and $\mathcal{B}_{\theta}^{(n)}$ such that $\|\mathcal{A} - \mathcal{A}_{\theta}^{(n)}\| \le \delta$ and $\|\mathcal{B} - \mathcal{B}_{\theta}^{(n)}\| \le \delta$. Consequently, $\mathcal{B}_{\theta}^{(n)}$ is bounded such that $\|\mathcal{B}_{\theta}^{(n)}\| \le \|\mathcal{B}\| + \delta$.
The perturbed transition operator satisfies:
\begin{align*}
    \|\mathrm{Id}_{\mathcal Y} - \mathcal{B}_{\theta}^{(n)} \mathcal{A}_{\theta}^{(n)}\| & = \|\mathrm{Id}_{\mathcal Y} - \mathcal{B}\mathcal{A} + \mathcal{B}\mathcal{A} -  \mathcal{B}_{\theta}^{(n)} \mathcal{A} + \mathcal{B}_{\theta}^{(n)} \mathcal{A} - \mathcal{B}_{\theta}^{(n)} \mathcal{A}_{\theta}^{(n)} \| \\
    & \leq \|\mathrm{Id}_{\mathcal Y} - \mathcal{B}\mathcal{A} \| + \|\mathcal{B} - \mathcal{B}_{\theta}^{(n)} \| \|\mathcal{A}\| + \|\mathcal{B}_{\theta}^{(n)} \| \|\mathcal{A} - \mathcal{A}_{\theta}^{(n)} \|\\
    & \leq q + (\|\mathcal{A} \| + \|\mathcal{B}_{\theta}^{(n)} \|)\delta \\
    & \leq q + (\|\mathcal{A} \| + \|\mathcal{B} \| + \delta ) \times \delta:= q_\theta.
\end{align*}
Since $q \in (0,1)$, we can choose $\delta$ sufficiently small such that $q_\theta < 1$.

From the boundedness of $\mathcal{A}$ and the convergence result of Lemma~\ref{lem:freq_trunc_init_appro}, the sequences $\{\boldsymbol{u}^{(n)}\}$ and $\{\boldsymbol{f} - \mathcal{A}\boldsymbol{u}^{(n)}\}$ are uniformly bounded for all $n \in \{0, \dots, N\}$. Thus, we define a uniform constant $C > 0$ independent of $n$ and $\theta$ such that:
\begin{equation*}
  C:= \max \{\|\boldsymbol{f} - \mathcal{A} \boldsymbol{u}^{(n)}\|, \|\mathcal{B}_{\theta}^{(n)}\|, \|\boldsymbol{u}_\theta^{(n)}\| \}, \quad \forall n \in \mathbb{N}.
\end{equation*}
Substituting these into Eq.~\eqref{eq:error_norm_bound}, we obtain the linear recurrence:
\begin{equation*}
\|\boldsymbol{e}^{(n+1)}\| \le q_\theta \|\boldsymbol{e}^{(n)}\| + 2C\delta.
\end{equation*}
Given $\boldsymbol{e}^{(0)} = \mathbf{0}$, iterating this relation yields:
\begin{equation*}
\|\boldsymbol{e}^{(N)}\| \le \sum_{k=0}^{N-1} q_\theta^k (2C\delta) = 2C\delta \frac{1 - q_\theta^N}{1 - q_\theta} < \frac{2C\delta}{1 - q_\theta}.
\end{equation*}
For any $\epsilon > 0$, by choosing $\delta < \frac{\epsilon (1 - q_\theta)}{2C}$, we arrive at $\|\boldsymbol{e}^{(N)}\| < \epsilon$, which completes the proof.

\end{proof}

\begin{remark}
While Lemma~\ref{lem:univ_iter} employs the universal approximation property of fully connected neural networks (FCNs) to establish a theoretical lower bound for approximation within $L^2$ or $H^s$ topologies over compact subsets, the deployment of convolutional neural networks (CNNs) in SINO is rigorously necessitated by the structural properties of partial differential equations (PDEs). Given that the operators $\mathcal{A}$ and $\mathcal{B}$ typically discretize local differential or integral operators, their inherent spatial locality, translation equivariance, and multiscale interactions align precisely with the inductive biases of CNNs. Specifically, through weight-sharing and sparse connectivity, CNNs impose an efficient regularization that constrains the hypothesis space to mappings respecting the underlying grid geometry, thereby ensuring the approximation of bounded linear operators while significantly enhancing parameter efficiency and generalization stability over unstructured FCNs.




\end{remark}

}

{
\begin{lemma}[Universal Approximation Property for Starter Operator]    \label{lem:fno_universal} 
Let $\mathcal{G}: H^s(\mathbb{T}^d; \mathbb{R}^{d_{\boldsymbol{f}}}) \to L^2(\mathbb{T}^d; \mathbb{R}^{d_{\boldsymbol{u}}})$ for some $s \geq 0$ be a continuous operator. For any $K \in \mathbb{N}^+$, let $\mathcal{G}_K$ be the Fourier truncated operator defined in Eq.~\eqref{eq:operator_proj}. If $\mathcal{C} \subset H^s(\mathbb{T}^d; \mathbb{R}^{d_{\boldsymbol{f}}})$ is a compact set, then for any $\epsilon > 0$, there exists a starter module $\mathcal{S}_{\theta}$ such that:
    \begin{equation*}
         \sup_{\boldsymbol{f} \in \mathcal{C}} \| \mathcal{G}_K(\boldsymbol{f}) - \mathcal{S}_{\theta}(\boldsymbol{f})\|_{L^2} \leq \epsilon.
     \end{equation*}
    \end{lemma}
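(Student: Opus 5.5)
The plan is to reduce $\mathcal{G}_K$ to a finite-dimensional continuous map and then realise that map with the starter architecture \eqref{eq:starter}, as in the standard FNO universal approximation argument.

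\textbf{Finite-dimensional reduction.} Since $\mathcal{P}_K$ retains only the modes with $|k|_\infty<K$, the operator
\[
\mathcal{G}_K=\mathcal{P}_K\circ\mathcal{G}\circ\mathcal{P}_K
\]
factors through the finite-dimensional space $V_K$ of trigonometric polynomials of degree $<K$. Concretely, $\mathcal{G}_K(\boldsymbol f)=\Phi(\widehat{\boldsymbol f}(k)_{|k|_\infty<K})$, where
\[
\Phi:\mathbb{C}^{M d_{\boldsymbol f}}\to\mathbb{C}^{M d_{\boldsymbol u}},\qquad M=(2K-1)^d,
\]
sends the low coefficients of $\boldsymbol f$ to the low Fourier coefficients of $\mathcal{G}(\mathcal{P}_K\boldsymbol f)$. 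Continuity of $\mathcal{G}$ gives continuity of $\Phi$. The image of $\mathcal{C}$ under the coefficient map is compact, because $\mathcal{P}_K$ is bounded on $H^s$. By Parseval, an $L^2$ error on $\mathcal{P}_K\mathcal{G}$ equals the Euclidean error on these coefficients, up to a fixed normalisation. So it suffices to approximate the continuous map $\Phi$ uniformly on a compact set $Z\subset\mathbb{C}^{Md_{\boldsymbol f}}$.

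\textbf{Realisation in the starter.} One block of \eqref{eq:starter} is too restrictive to represent an arbitrary $\Phi$. I would therefore read $\mathcal{S}_\theta$ the way the paper's lifting–operator–projection paradigm permits: the pointwise lifting $\mathcal{L}_\theta$, one or more spectral blocks, and the projection $\mathcal{Q}_\theta$. The construction has three steps.
\begin{enumerate}
\item A first spectral layer whose weights $R_\theta(k)$ select each retained mode writes the coefficient vector $\widehat{\boldsymbol f}(k)$, $|k|_\infty<K$, into channels as constant-in-space functions. The $k=0$ mode projection is constant, and real and imaginary parts are handled by combining $\cos$ and $\sin$ with a spatial average. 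Set $W_\theta=0$ in this layer.
\item Pointwise MLP layers, i.e.\ the $W_\theta$ parts together with $\sigma$, then act on a spatially constant vector. By the classical universal approximation theorem they approximate $\Phi$ uniformly on $Z$ to within $\epsilon'$.
\item A final layer multiplies these output channels by $e^{i\langle k,x\rangle}$ to rebuild $\sum_{|k|_\infty<K}\widehat{\boldsymbol u}(k)e^{i\langle k,x\rangle}$. This can be done with a spectral layer acting on a constant channel carrying the basis functions, or with the positional features that the lifting may include.
\end{enumerate}
Parseval then gives a total error of at most $C_K\epsilon'$, and choosing $\epsilon'=\epsilon/C_K$ finishes the argument.

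\textbf{Expected obstacle.} The hard part is extracting and re-synthesising the Fourier coefficients exactly with only the operations in \eqref{eq:starter}, namely a linear spectral filter plus a pointwise affine map and a nonlinearity. Step 1 is fine, because spectral multiplication followed by averaging yields constants. The delicate step is the re-synthesis, since a pointwise map cannot multiply by $e^{i\langle k,x\rangle}$ unless those functions are available as channels. I would first try to supply $\cos\langle k,x\rangle$ and $\sin\langle k,x\rangle$ as extra input channels via the lifting, which is standard in the FNO universality proof of Kovachki et al. The multiplication of a coefficient by a basis function is a continuous bilinear map on a compact set, so pointwise MLPs approximate it as well. If grid coordinates are not allowed, the fallback is to cite the FNO universal approximation theorem directly for the composite of $\mathcal{G}_K$, which is itself continuous from $H^s$ to $L^2$.
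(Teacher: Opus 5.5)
Your argument is essentially the paper's. The paper gives no proof of its own and simply cites \cite{kovachki2021universal}, whose proof is exactly your factorisation of $\mathcal{G}_K$ through the low Fourier coefficients: extract them, apply a continuous finite-dimensional map with an MLP, then re-synthesise. The paper also tacitly reads $\mathcal{S}_\theta$ as a lifted multi-layer FNO rather than the single block \eqref{eq:starter}, as you do.

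Your obstacle paragraph needs one correction: step 1 is not easier than step 3. A Fourier multiplier $R_\theta(k)$ leaves mode $k$ at frequency $k$, so ``spectral multiplication followed by averaging'' recovers only $\widehat{\boldsymbol f}(0)$. To bring $\widehat{\boldsymbol f}(k)$ with $k\neq 0$ into a constant channel you must demodulate. That means forming MLP approximations of $\boldsymbol f(x)\cos\langle k,x\rangle$ and $\boldsymbol f(x)\sin\langle k,x\rangle$ before averaging, which requires the same positional channels as the re-synthesis.

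For the same reason your fallback does not exist. If nothing depends on $x$, then $W_\theta$, $R_\theta(k)$, $\sigma$ and pointwise lifting/projection all commute with translations $\tau_h$. Hence $\mathcal{S}_\theta$ is translation-equivariant, and the lemma fails for non-equivariant $\mathcal{G}$. For instance, take the scalar case, $K\ge 2$, $\mathcal{G}(\boldsymbol f)\equiv\langle\boldsymbol f,\cos x_1\rangle_{L^2}$ (a constant function), and $\mathcal{C}=\{\tau_h\boldsymbol f_0\}_{h\in[0,2\pi]}$ with $\boldsymbol f_0=\cos x_1$. Then the mean of $\mathcal{G}_K(\tau_h\boldsymbol f_0)$ is proportional to $\cos h$. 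The mean of $\mathcal{S}_\theta(\tau_h\boldsymbol f_0)=\tau_h\mathcal{S}_\theta(\boldsymbol f_0)$ is independent of $h$, so the uniform error stays bounded away from zero.

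The theorem in \cite{kovachki2021universal} is itself proved for FNOs whose lifting (equivalently, bias functions) depends on $x$. Positional features are therefore an implicit hypothesis of this lemma, not an optional convenience. With them included, both your extraction and re-synthesis steps work and the proof goes through.
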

The detailed proof, leveraging the approximation power of Fourier Neural Operators, can be found in \cite{kovachki2021universal}.

Building upon the convergence analysis and the existence results provided in the previous sections, we now establish the primary approximation theorem for the SINO architecture. This proof integrates the frequency-truncation initialization bound from Lemma \ref{lem:fno_universal} with the iterative refinement stability and neural emulation guarantees derived in Lemmas \ref{lem:freq_trunc_init_appro} and \ref{lem:univ_iter}, respectively. Consequently, the formal proof of Theorem \ref{thm:fsno_approx} is detailed as follows.

\begin{proof}[Proof of Theorem \ref{thm:fsno_approx}]
For any $\boldsymbol{f} \in \mathcal{C} \subset H^s(\mathbb{T}^d)$, let $\boldsymbol{u} := \mathcal{G}(\boldsymbol{f})$ be the exact solution. We decompose the total approximation error of the SINO framework into three components:
\begin{align*}
\|\mathcal{G}(\boldsymbol{f})-\mathcal{G}_{\theta}(\boldsymbol{f})\|_{L^2}
&=
\|\boldsymbol{u}- \mathcal{I}^{N}(\mathcal{G}_K\boldsymbol{f};\boldsymbol{f}) + \mathcal{I}^{N}(\mathcal{G}_K\boldsymbol{f};\boldsymbol{f}) -\mathcal{I}_{\theta}^{N}(\mathcal{G}_K\boldsymbol{f};\boldsymbol{f})+\mathcal{I}_{\theta}^{N}(\mathcal{G}_K\boldsymbol{f};\boldsymbol{f})-\mathcal{I}_{\theta}^{N} (\mathcal{S}_{\theta}(\boldsymbol{f});\boldsymbol{f})\|_{L^2}\\
&\le
\underbrace{
\|\boldsymbol{u}-\mathcal{I}^{N}(\mathcal{G}_K\boldsymbol{f};\boldsymbol{f})\|_{L^2}
}_{\mathrm{(I)}}
+
\underbrace{
\|\mathcal{I}^{N}(\mathcal{G}_K\boldsymbol{f};\boldsymbol{f})
-
\mathcal{I}_{\theta}^{N}(\mathcal{G}_K\boldsymbol{f};\boldsymbol{f})\|_{L^2}
}_{\mathrm{(II)}} \\
& +
\underbrace{
\|\mathcal{I}_{\theta}^{N}(\mathcal{G}_K\boldsymbol{f};\boldsymbol{f})-\mathcal{I}_{\theta}^{N} (\mathcal{S}_{\theta}(\boldsymbol{f});\boldsymbol{f})\|_{L^2}
}_{\mathrm{(III)}}.
\end{align*}
By Lem.~\ref{lem:freq_trunc_init_appro}, for any $\epsilon>0$, there exists $N$ sufficiently large such that
\[
\|\boldsymbol{u}-\mathcal{I}^{N}(\mathcal{G}_K\boldsymbol{f},\boldsymbol{f})\|_{L^2}
<
\frac{\epsilon}{3}.
\]
By Lem.~\ref{lem:univ_iter}, we can choose appropriate iterator $\mathcal{I}_{\theta}^{N}$
\begin{equation*}
    \|\mathcal{I}^{N}(\mathcal{G}_K\boldsymbol{f};\boldsymbol{f})
-
\mathcal{I}_{\theta}^{N}(\mathcal{G}_K\boldsymbol{f};\boldsymbol{f})\|_{L^2} < \frac{\epsilon}{3}.
\end{equation*}
By Lem.~\ref{lem:fno_universal}, there exists a starter operator $\mathcal{S}_{\theta}$ such that
\begin{equation*}
    \sup_{f\in \mathcal{C}} \| \mathcal{G}_K\boldsymbol{f} - \mathcal{S}_{\theta}(\boldsymbol{f}) \|_{L^2} < \frac{\epsilon}{3}.
\end{equation*}
As the iterator $\mathcal{I}_{\theta}^{N}$ is a contraction linear mapping, we can further get
\begin{equation*}
    \|\mathcal{I}_{\theta}^{N}(\mathcal{G}_K\boldsymbol{f};\boldsymbol{f})-\mathcal{I}_{\theta}^{N} (\mathcal{S}_{\theta}(\boldsymbol{f});\boldsymbol{f})\|_{L^2} < \frac{\epsilon}{3}
\end{equation*}
Combining the above three estimates yields, we finally get
\[
\sup_{\boldsymbol{f}\in K}
\|\mathcal{G}(\boldsymbol{f})-\mathcal{G}_{\theta}(\boldsymbol{f})\|_{L^2}
<
\epsilon,
\]
which completes the proof.
\end{proof}

\begin{remark}[Difference between our proof and ~\cite{kovachki2021universal}]
It is important to clarify that the universal approximation capability of SINO is not merely inherited from its FNO-based starter module. In the classical FNO framework~\cite{kovachki2021universal}, approximation accuracy is strictly limited by the choice of the spectral truncation level $K$, as the model's expressivity is essentially determined by its precision in approximating the finite-dimensional projection operator $\mathcal{G}_K$. In this work, Lemma~\ref{lem:fno_universal} is employed solely to ensure that the starter operator provides a valid spectral initialization. In contrast, SINO decouples the functions of initialization and operator refinement: while the starter module is only required to capture the coarse spectral distribution, the introduced iterator module progressively compensates for the residual between $\mathcal{G}_K$ and the target operator $\mathcal{G}$ through iterative evolution. Consequently, the approximation performance of SINO stems from the synergistic effect of spectral initialization and iterative correction, rather than the contribution of a single module. The numerical experiments in Section~\ref{sec:model_eva} further confirm that this collaborative mechanism significantly outperforms traditional monolithic operator architectures in both accuracy and convergence efficiency.

    
\end{remark}

\begin{remark}[Extension to Sobolev target spaces]
For clarity of exposition, Theorem \ref{thm:fsno_approx} is initially established for target operators mapping into $L^2(\mathbb{T}^d; \mathbb{R}^{d_{\boldsymbol{u}}})$, yet this result generalizes naturally to target spaces $H^{s'}(\mathbb{T}^d; \mathbb{R}^{d_{\boldsymbol{u}}})$ for any $s' \ge 0$. Following a lifting argument analogous to that in \cite{kovachki2021universal}, since the Fourier basis constitutes a complete orthonormal system across the scale of Sobolev spaces, the approximation in $H^{s'}$ can be controlled by $L^2$ approximation via the spectral equivalence of Sobolev norms. Given that the SINO architecture—comprising linear operations and spectral convolutions—inherently preserves the regularity of the underlying distributions and acts as a bounded operator between corresponding Sobolev spaces, the emulation error established in the $L^2$ sense consistently translates to the $H^{s'}$ setting. Consequently, provided the target operator $\mathcal{G}: H^s(\mathbb{T}^d) \to H^{s'}(\mathbb{T}^d)$ is continuous with respect to the associated Sobolev topologies, SINO maintains the universal approximation property, achieving arbitrary precision $\epsilon$ over any compact subset $\mathcal{C} \subset H^s(\mathbb{T}^d)$ for all $s, s' \ge 0$.
\end{remark}}

\subsection{Spectral Property of Fixed-Point Iterations for Operator Equations}
\label{sec:Spectral_analysis}

\begin{theorem}[Spectral Property of Fixed-Point Iterations]
\label{thm:spectral_convergence_operator}
Let $\mathcal{X}$ and $\mathcal{Y}$ be Hilbert spaces, and let $\mathcal{A} \in \mathcal{L}(\mathcal{X}, \mathcal{Y})$ be a bounded linear operator. Consider the operator equation $\mathcal{A} \boldsymbol{u} = \boldsymbol{f}$ for $\boldsymbol{f} \in \mathcal{Y}$. Let $\boldsymbol{u}^* \in \mathcal{X}$ denote an exact solution such that $\mathcal{A} \boldsymbol{u}^* = \boldsymbol{f}$. Define a stationary fixed-point iteration by
\begin{equation*}
\boldsymbol{u}^{(n+1)} = \boldsymbol{u}^{(n)} + \mathcal{B}(\boldsymbol{f} - \mathcal{A} \boldsymbol{u}^{(n)}), \quad n \ge 0,
\end{equation*}
where $\mathcal{B} \in \mathcal{L}(\mathcal{Y}, \mathcal{X})$ is a bounded linear correction operator. Let $\mathcal{E} := \mathrm{Id}_{\mathcal{X}} - \mathcal{B}\mathcal{A}$ be the error propagation operator. Assume that $\mathcal{E}$ admits a spectral decomposition with eigenpairs $\{(\eta_j, \boldsymbol{\phi}_j)\}_{j=1}^{\infty}$, where the eigenvectors $\{\boldsymbol{\phi}_j\}_{j=1}^{\infty}$ form an orthonormal basis for $\mathcal{X}$.

If the initial error $\boldsymbol{e}^{(0)} = \boldsymbol{u}^{(0)} - \boldsymbol{u}^*$ has the expansion $\boldsymbol{e}^{(0)} = \sum_{j=1}^{\infty} \alpha_j \boldsymbol{\phi}_j$, then the error at the $n$-th iteration satisfies\begin{equation*}\boldsymbol{e}^{(n)} = \sum_{j=1}^{\infty} \alpha_j \eta_j^n \boldsymbol{\phi}_j,\end{equation*}where the series converges in the norm topology of $\mathcal{X}$. Consequently, the $j$-th spectral component of the error, given by the projection $\langle \boldsymbol{e}^{(n)}, \boldsymbol{\phi}_j \rangle_{\mathcal{X}}$, scales by a factor of $\eta_j^n$. The asymptotic decay rate of each component is determined by $|\eta_j|$; components with $|\eta_j| \approx 1$ exhibit slow convergence, while those with $|\eta_j| \ll 1$ decay rapidly.
\end{theorem}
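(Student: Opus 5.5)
The plan is to first establish the error recursion and then pass to the eigenbasis. From the iteration and $\mathcal{A}\boldsymbol{u}^*=\boldsymbol{f}$ we get $\mathcal{B}\boldsymbol{f}=\mathcal{B}\mathcal{A}\boldsymbol{u}^*$. Subtracting $\boldsymbol{u}^*$ from both sides of the update then gives
\[
\boldsymbol{e}^{(n+1)}=\boldsymbol{e}^{(n)}-\mathcal{B}\mathcal{A}\boldsymbol{e}^{(n)}=\mathcal{E}\boldsymbol{e}^{(n)},
\]
exactly as in the proof of Lemma~\ref{lem:freq_trunc_init_appro}. By induction, $\boldsymbol{e}^{(n)}=\mathcal{E}^n\boldsymbol{e}^{(0)}$ for every $n\ge 0$. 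Note that $\mathcal{E}$ is a bounded operator on $\mathcal{X}$, since $\mathcal{B}\mathcal{A}\in\mathcal{L}(\mathcal{X},\mathcal{X})$, and hence so is each power $\mathcal{E}^n$.

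Next we expand $\boldsymbol{e}^{(0)}=\sum_j\alpha_j\boldsymbol{\phi}_j$ in the orthonormal eigenbasis. By Parseval, $(\alpha_j)\in\ell^2$, and the partial sums $S_M=\sum_{j\le M}\alpha_j\boldsymbol{\phi}_j$ converge to $\boldsymbol{e}^{(0)}$ in norm. On finite sums, linearity together with $\mathcal{E}\boldsymbol{\phi}_j=\eta_j\boldsymbol{\phi}_j$ gives $\mathcal{E}^nS_M=\sum_{j\le M}\alpha_j\eta_j^n\boldsymbol{\phi}_j$. Continuity of $\mathcal{E}^n$ then yields $\mathcal{E}^nS_M\to\mathcal{E}^n\boldsymbol{e}^{(0)}=\boldsymbol{e}^{(n)}$. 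It follows that the series $\sum_j\alpha_j\eta_j^n\boldsymbol{\phi}_j$ converges in norm and that its sum is $\boldsymbol{e}^{(n)}$. This step also shows that the coefficient sequence $(\alpha_j\eta_j^n)_j$ lies in $\ell^2$. An independent check is that each eigenvalue satisfies $|\eta_j|\le\|\mathcal{E}\|$, so the sequence is bounded by $\|\mathcal{E}\|^n|\alpha_j|$ and is square-summable directly.

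The component statement follows from orthonormality and continuity of the inner product:
\[
\langle\boldsymbol{e}^{(n)},\boldsymbol{\phi}_j\rangle_{\mathcal{X}}=\alpha_j\eta_j^n=\eta_j^n\langle\boldsymbol{e}^{(0)},\boldsymbol{\phi}_j\rangle_{\mathcal{X}},
\]
so $|\langle\boldsymbol{e}^{(n)},\boldsymbol{\phi}_j\rangle|=|\alpha_j|\,|\eta_j|^n$. This is geometric decay at rate $|\eta_j|$: it is fast when $|\eta_j|\ll1$ and slow when $|\eta_j|\approx1$, which gives the qualitative conclusion.

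The only delicate point is justifying the interchange of $\mathcal{E}^n$ with the infinite sum, and boundedness of $\mathcal{E}$ handles it as described above. One further technicality needs a remark. The statement writes $\boldsymbol{e}^{(0)}=\boldsymbol{u}^{(0)}-\boldsymbol{u}^*$, the opposite sign to the convention used earlier in the paper. Either sign satisfies the same linear recursion, so the result is unaffected. We also need $\mathcal{A}$ to map into the space where $\mathcal{B}$ acts, which is given by the hypotheses $\mathcal{A}\in\mathcal{L}(\mathcal{X},\mathcal{Y})$ and $\mathcal{B}\in\mathcal{L}(\mathcal{Y},\mathcal{X})$.
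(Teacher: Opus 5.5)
Your proof is correct and follows essentially the same route as the paper. Both arguments derive $\boldsymbol{e}^{(n)}=\mathcal{E}^n\boldsymbol{e}^{(0)}$ from $\mathcal{A}\boldsymbol{u}^*=\boldsymbol{f}$, push the bounded operator $\mathcal{E}^n$ through the orthonormal eigen-expansion, and read off the components by orthonormality; your partial-sum argument and the $\ell^2$ check via $|\eta_j|\le\|\mathcal{E}\|$ just spell out the interchange of $\mathcal{E}^n$ with the infinite sum, which the paper justifies in one line by linearity and continuity.
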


\begin{proof}
Let $\boldsymbol e^{(n)} := \boldsymbol u^{(n)} - \boldsymbol u^*$ be the error at iteration $n$. By the definition of the fixed-point iteration and the identity $\mathcal A\boldsymbol u^* = \boldsymbol f$, we have:
\begin{equation*}
\boldsymbol u^{(n+1)} = \boldsymbol u^{(n)} + \mathcal B(\boldsymbol f - \mathcal A \boldsymbol u^{(n)})
\end{equation*}
and
\begin{equation*}
\boldsymbol u^* = \boldsymbol u^* + \mathcal B(\boldsymbol f - \mathcal A \boldsymbol u^*).
\end{equation*}
Subtracting the latter from the former yields the error recurrence:
\begin{align*}
    \boldsymbol e^{(n+1)}
    &=
    \boldsymbol u^{(n+1)}-\boldsymbol u^*
    \\
    &=
    \boldsymbol u^{(n)}-\boldsymbol u^*
    -
    \mathcal B\mathcal A
    \left(
        \boldsymbol u^{(n)}-\boldsymbol u^*
    \right)
    \\
    &=
    \left(
        \mathrm{Id}_{\mathcal X}-\mathcal B\mathcal A
    \right)
    \boldsymbol e^{(n)}
    \\
    &=
    \mathcal E \boldsymbol e^{(n)}.
\end{align*}
By induction, it follows that $\boldsymbol e^{(n)} = \mathcal E^n \boldsymbol e^{(0)}$ for any $n \in \mathbb{N}$.

Given the spectral decomposition assumption, $\mathcal E \boldsymbol \phi_j = \eta_j \boldsymbol \phi_j$ implies that $\boldsymbol \phi_j$ is an eigenvector of the power operator $\mathcal E^n$ with eigenvalue $\eta_j^n$, i.e., $\mathcal E^n \boldsymbol \phi_j = \eta_j^n \boldsymbol \phi_j$. Substituting the expansion of the initial error $\boldsymbol e^{(0)} = \sum_{j=1}^{\infty} \alpha_j \boldsymbol \phi_j$ into the expression for $\boldsymbol e^{(n)}$, and invoking the linearity and continuity of the bounded operator $\mathcal E^n$, we obtain:
\begin{equation*}
\boldsymbol e^{(n)} = \mathcal E^n \Big( \sum_{j=1}^{\infty} \alpha_j \boldsymbol \phi_j \Big) = \sum_{j=1}^{\infty} \alpha_j \mathcal E^n \boldsymbol \phi_j = \sum_{j=1}^{\infty} \alpha_j \eta_j^n \boldsymbol \phi_j,
\end{equation*}
where the summation converges in the norm of $\mathcal X$. Since $\{\boldsymbol \phi_j\}_{j \ge 1}$ is an orthonormal basis, the $j$-th spectral component of the error is $\langle \boldsymbol e^{(n)}, \boldsymbol \phi_j \rangle_{\mathcal X} = \alpha_j \eta_j^n$. Thus, the magnitude of each component decays at the geometric rate $|\eta_j|^n$. This completes the proof.
\end{proof}

\begin{remark}
In this formulation, $\mathcal{B}$ acts as a learned or prescribed preconditioning operator, where the composite operator $\mathcal{B}\mathcal{A}: \mathcal{X} \to \mathcal{X}$ characterizes the projection of the current state onto the residual correction space and the error transition operator $\mathcal{E} = \mathrm{Id}_{\mathcal{X}} - \mathcal{B}\mathcal{A}$ governs the iteration-wise error contraction. When $\mathcal{A}$ originates from a differential operator, the spectrum $\sigma(\mathcal{E}) = \{ \eta_j \}$ is typically aligned with spatial frequency modes, leading to a spectral interpretation of convergence dynamics where error components associated with $|\eta_j| \ll 1$ are rapidly attenuated, while those with $|\eta_j| \approx 1$ near the spectral radius exhibit high persistence. Consequently, classical stationary iterations usually function as high-frequency smoothers that effectively damp oscillatory error components but struggle to reduce the low-frequency coarse-scale residuals corresponding to the dominant eigenvalues of the iteration matrix.
\end{remark}

\section{Comparison between SINO and Deep Unrolling}
\label{Comparison}
We designed the iterator operator using algorithm unrolling, a technique that shares conceptual roots with classical deep unrolling methods \cite{monga2021algorithm} but offers distinct advantages. The core mechanics involve mapping iterations of a traditional mathematical algorithm (e.g., for optimization or sparse coding) onto successive network layers while converting fixed parameters into trainable weights. This approach achieves a finite-depth model that delivers superior performance, structural interpretability, and computational efficiency. 

While algorithm unrolling methods offer numerous advantages, they also present certain limitations. First, as the number of unrolled layers increases, models may encounter gradient vanishing or explosion during training, leading to optimization difficulties. Moreover, the repeated use of nearly identical modules in deep networks can result in the condensation phenomenon \cite{xu2025overview}, where deeper structures degenerate into near-identity mappings, failing to achieve meaningful performance improvements. Second, these methods essentially learn mappings for discrete data points; when distribution shifts occur between training and test data, the learned hyperparameters may become mismatched, causing significant performance degradation. Additionally, the scalability and flexibility of deep unrolling models pose challenges: achieving high accuracy often requires substantially increasing the unrolling depth, which leads to a sharp rise in computational and memory costs. Finally, the learned parameters often fail to satisfy the convergence conditions of the original algorithm. While this may enhance performance, it simultaneously undermines theoretical interpretability.

The iterator proposed in this study is an algorithm-unrolled model implemented in a latent space, which organically integrates the powerful representational capacity of representation learning with the inherent interpretability of algorithm unrolling. Multiscale residual learning conducted in this latent space demonstrates superior stability and effectiveness compared to direct operations in the original high-dimensional data space. Furthermore, by learning continuous mappings and adopting stable initialization strategies in this space, the model's generalization capability is significantly enhanced. Finally, leveraging the approximation theory of neural operators, we provide rigorous mathematical proofs for the proposed method, ensuring its stability and theoretical soundness from both empirical and theoretical perspectives.

{\section{Computational Complexity of SINO}
\label{app:complexity}

We analyze the computational complexity of the SINO to characterize its scalability. For a two-dimensional input discretized on an $n \times n$ grid, let $B$ denote the batch size, $C$ the latent channel width, $L$ the number of spectral layers in the starter, $K$ the number of retained Fourier modes per dimension, and $I$ the number of unrolled iterations. The total complexity is the sum of three components: lifting/projection, the spectral starter, and the iterative refinement.

\textbf{Lifting and Projection.} The lifting and projection operators are implemented as pointwise linear transformations across the spatial grid. Their combined complexity scales as $\mathcal{O}(Bn^2C^2)$, where we assume the input and output channel dimensions are small constants relative to $C$. This cost is typically negligible compared to the starter and iterator.

\textbf{Fourier-type starter.}
The starter follows the architecture of the Fourier Neural Operator (FNO)~\cite{li2020fourier}. Each spectral layer consists of a fast Fourier transform (FFT), a spectral convolution on $K \times K$ modes, and an inverse FFT. The complexity for $L$ such layers is $\mathcal{O}\left( LBCn^2 \log n + LBC^2K^2 \right)$. This component captures global dependencies through its $\mathcal{O}(n^2 \log n)$ scaling.

\textbf{CNN-based iterator.}
The iterator operator $\mathcal{I}_\theta$ performs $I$ residual updates. Each iteration involves two local convolutional operators, $\mathcal{A}_\theta^{(n)}$ and $\mathcal{B}_\theta^{(n)}$, parameterized by $k \times k$ kernels (typically $k=3$). The floating-point operations (FLOPs) for one convolutional layer are approximately $2Bn^2C^2k^2$. Thus, for $I$ iterations, the refinement complexity is $ \mathcal{O}(IBn^2C^2k^2)$. Given that $k$ is a small fixed constant, the dominant term simplifies to $\mathcal{O}(IBn^2C^2)$.

\textbf{Total Complexity.} Combining the above, the total inference complexity of SINO is given by:
\[
\text{FLOPs} = \mathcal{O} \left( LBCn^2 \log n + LBC^2K^2 + IBn^2C^2 \right).
\]
This formulation explains the empirical trade-off observed in Fig.~\ref{fig:iter_effect}: increasing the number of iterations $I$ improves accuracy with a strictly linear increase in computational cost, while the number of trainable parameters remains nearly constant if weights are shared across iterations. In practice, a moderate $I$ provides significant error reduction over the starter's initialization while maintaining the efficiency inherent in local convolutional operations.
}

\section{Time Evolution Dataset}   \label{secATeq}
\textbf{Incompressible Navier Stokes equation.} 
To generate training data for learning the operator of the 2-D Navier–Stokes equations in vorticity form, we solve the equation on the unit torus with periodic boundary conditions.
The initial vorticity $w_0(x)$ is sampled from a Gaussian random field
\begin{equation*}
w_0 \sim \mathcal{N}\!\left(0,\; 7^{3/2}(-\Delta+49I)^{-2.5}\right).
\end{equation*}
The forcing term is fixed as
\begin{equation*}
f(x) = 0.1 \bigl[\sin\!\bigl(2\pi(x_1+x_2)\bigr) + \cos\!\bigl(2\pi(x_1+x_2)\bigr)\bigr].
\end{equation*}
The equations are solved using the stream-function formulation with a pseudo-spectral method \cite{shen2011spectral}:
a Poisson equation is solved in Fourier space to recover the velocity field from vorticity,
the nonlinear term is evaluated in physical space, and dealiasing is applied.
Time integration is performed using a Crank--Nicolson scheme for the viscous term and an explicit update for the nonlinear term,
with a time step $\Delta t = 10^{-4}$.

For the case with viscosity $\nu = 10^{-3}$, we generate $5{,}000$ data samples, while for $\nu = 10^{-4}$, we generate $10{,}000$ data samples. For each viscosity setting, the training dataset consists of $1{,}000$ trajectories, whereas the validation and test datasets each contain $200$ trajectories with independently 
sampled initial conditions $w_0(x)$. Solutions are recorded every $\Delta t$ time units to construct the datasets used for training and evaluation.
To ensure comparability, the resolution is fixed at $64 \times 64$ for both training and testing,
and for smaller viscosities the final time horizon $T$ is reduced to avoid uncontrolled chaotic behavior.

\textbf{Acoustic wave equation.} 
 To generate the dataset, we solve the above equation using a finite difference scheme with second-order central differencing in time and eighth-order finite differences in space.
 
 In scenarios where the wave speed distribution \(c(x,y)\) of the medium is fixed and the source location varies, we get the fixed velocity map as 
 \begin{equation*}
     c(x,y) = (1+\sin x\sin y), \quad (x, y) \in [0, \pi]^2.
 \end{equation*}
 The spatial grid consists of $N_x = N_y = 64$ equally spaced points. The initial condition $u_0(x)$ is chosen as a Gaussian source. Letting $T \in [0, 1]$ and $\Delta t = 1e-3$, we set 
\[
u(x, y,0) = \exp\!\left(-\frac{\|x-x_c\|_2^2 + \|y-y_c\|_2^2}{\sigma^2}\right),
\]
where the center $(x_c, y_c)$ is sampled randomly from a uniform distribution over the grid points. We consider the temporal domain $t \in [0,1]$ and adopt a time step size of 
$\Delta t = 10^{-3}$, resulting in $1000$ discrete time steps. To construct the training and testing datasets, we uniformly subsample the solution every $20$ time steps, thereby obtaining temporal sequences length $50$. We use 1000 samples for model training and evaluation.

In scenarios where the wave speed distribution \(c(x,y)\) varies while the source location remains fixed, we consider heterogeneous acoustic wave propagation in realistic breast tissue slices from the open dataset \cite{zeng2025openbreastus}, following the experimental setup in \cite{zeng2023neural}. Each breast slice provides a spatially varying speed map of shape $(480, 480)$ with a background sound speed of $1500 \mathrm{m/s}$. The computational domain is defined over a $24 \mathrm{cm} \times 24 \mathrm{cm}$ square region and discretized with a spatial step size $\Delta x = \Delta y = 0.1 \mathrm{cm}$, resulting in a $240 \times 240$ spatial grid.

The wave equation is solved using a finite-difference time-domain scheme with second-order time stepping and eighth-order spatial differencing. We place four point sources uniformly along a circular ring of radius $9 
\mathrm{cm}$ within the domain. The temporal domain is $t \in [0, 0.3] \mathrm{ms}$, with a time step size $\Delta t = 0.3\mu\mathrm{s}$, leading to $1000$ time steps in total. The source function is a Ricker wavelet with a central frequency of $0.5 \mathrm{MHz}$, which corresponds to two samples per wavelength at the chosen temporal resolution.

The forward simulation returns pressure fields sampled every $20$ time steps, yielding temporal sequences of length $50$. For each breast slice, the recorded data has a final shape of $(50, 240, 240)$. We use four sources per simulation and adopt a record gap of $20$ for data collection. All simulations are conducted under a CFL number $C \le 1$ to ensure numerical stability.

\textbf{Shallow-water equation.}  
To generate $N_{\text{train}}$ samples for training, we consider a 2D radial dam-break problem on a square domain $\Omega=[-2.5,2.5]^2$. 
The initial water height is set as 
\[
h(0,x,y)=
\begin{cases}
2.0, & r<\sqrt{x^2+y^2},\\[4pt]
1.0, & r\geq \sqrt{x^2+y^2},
\end{cases}
\]
where the radius $r$ is randomly sampled from a uniform distribution 
$r \sim \mathcal{U}(0.3,0.7)$. The system is simulated using the finite volume solver provided by \texttt{PyClaw} \cite{ketcheson2012pyclaw}, with second-order accurate reconstruction in space and a TVD Runge--Kutta scheme for time integration. 
The spatial grid is discretized into $256 \times 256$ cells, and the 
solution is computed up to $T=1.0$ with a time step $\Delta t=10^{-3}$, resulting in $1000$ temporal snapshots. 
For constructing the dataset, we uniformly subsample every $20$ snapshots to obtain temporal sequences of length $50$. 
For each setting, we generate $N_{\text{train}}=1000$ training trajectories, and $N_{\text{vali}}=N_{\text{test}}=200$ trajectories for validation and testing, respectively, all with independently sampled initial radius.

We summarize the considered time-evolution PDE benchmarks together with their corresponding operator mappings and representative input-output examples in Figure.~\ref{fig:table1}.
\begin{figure}[h]
    \centering
    \includegraphics[width=1.0\linewidth]{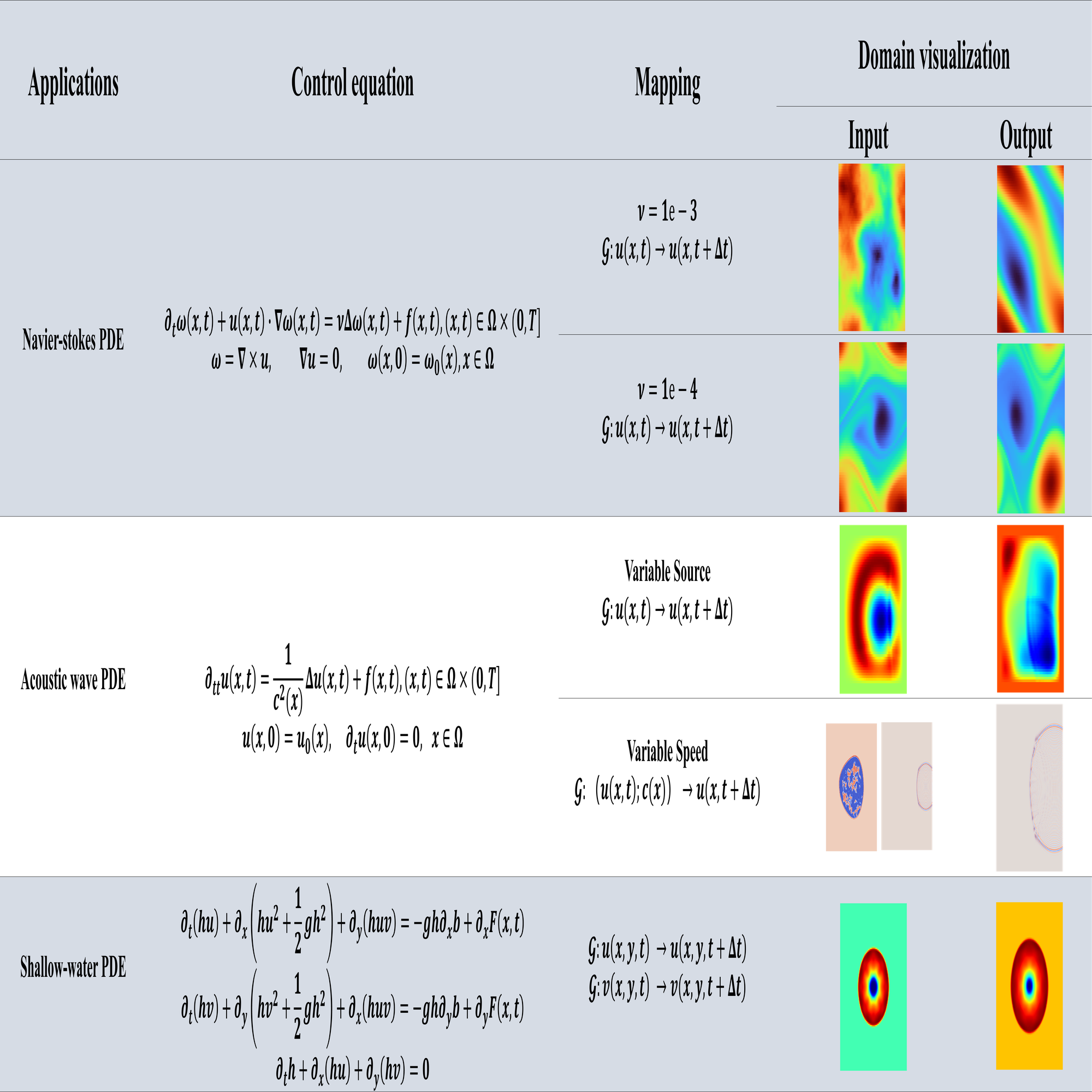}
    \caption{Summary of the time-evolution PDE benchmarks considered in this work, including the governing equations, operator mappings, and representative input-output field visualizations.
}
    \label{fig:table1}
\end{figure}

{\section{Additional results}
We provide supplementary experimental results to further validate the robustness and generalization utility of the proposed SINO framework across various physical regimes. Specifically, wave propagation predictions on a high-wavenumber dataset are presented in Fig.~\ref{fig:High_wave}, while the model's zero-shot super-resolution performance on the 1D Burgers’ equation is summarized in Table~\ref{tab:model_compare}. To further investigate long-term stability, we analyze error accumulation over 100 recursive autoregressive inference steps as illustrated in Fig.~\ref{fig:long_horizon_advection_diffusion}, which demonstrates SINO's superior error resilience in extended temporal forecasting.

\begin{figure}[htp!]
    \centering
    \includegraphics[width=0.85\linewidth]{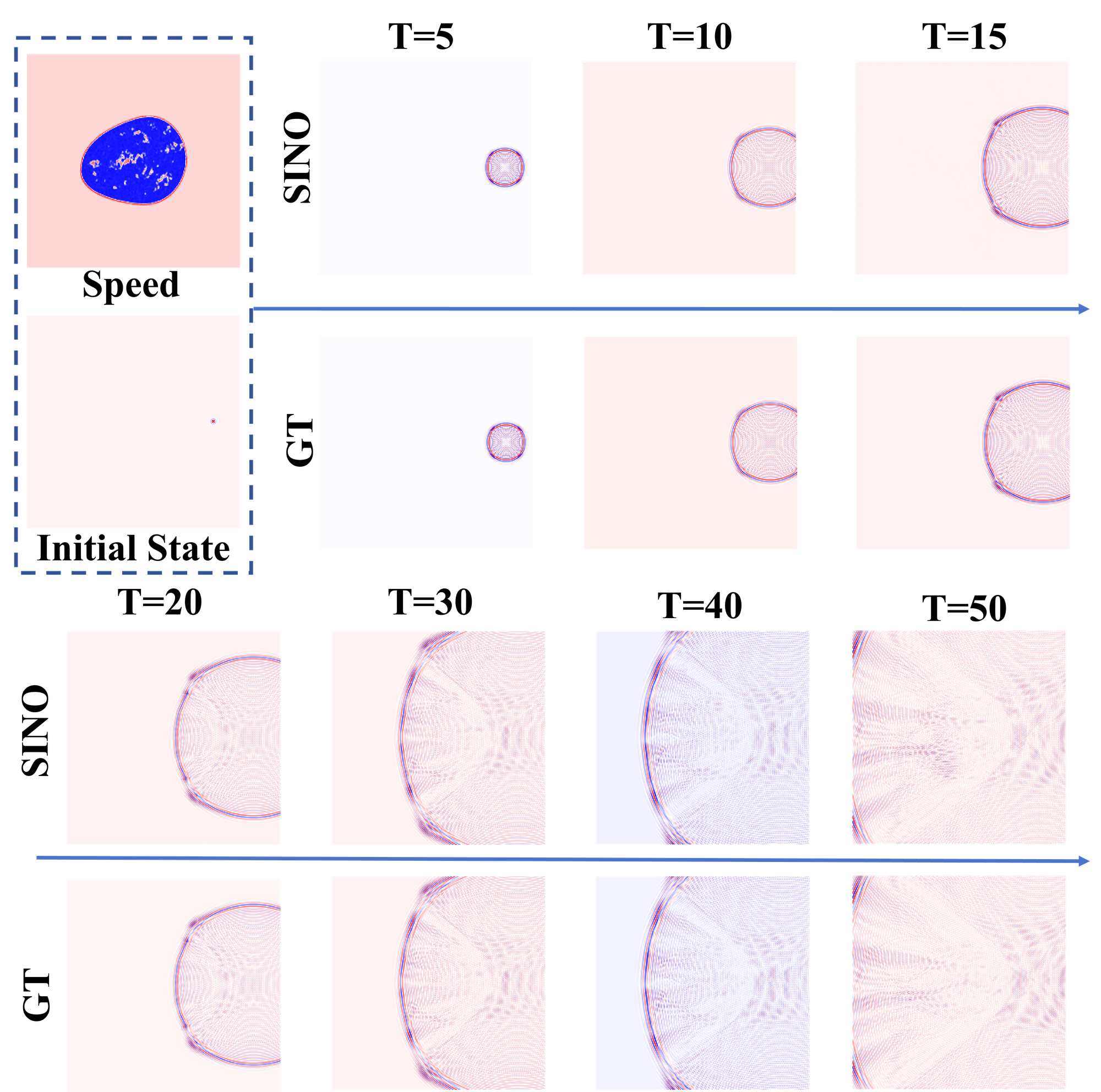}
    \vspace{0.3cm}
    \caption{\textbf{Wave propagation prediction on a high-wavenumber dataset.} 
    The leftmost panel (blue dashed box) illustrates the input configuration, including the velocity map and the source location. The panels to the right and below display the wavefield evolution over 50 time steps. Odd rows show the ground truth wavefields obtained from numerical simulations, while even rows present the corresponding predictions generated by the model. The comparison highlights the model’s ability to capture fine-scale oscillations and complex scattering patterns in high-frequency regimes.}
    \label{fig:High_wave}
\end{figure}

\begin{table*}[htbp]
\centering
\caption{Benchmarks on 1-d Burgers’ equation for zero-shot super resolution. We train the SINO model with different iterative numbers $I$ on 1D Burgers data with size$=256$ and evaluated at various higher resolution, without seeing any higher resolution data. The best results are highlighted in red, and the second-best results are shown in boldface.}
\label{tab:model_compare}
\renewcommand{\arraystretch}{1.2}
\setlength{\tabcolsep}{10pt}
\begin{tabular}{lcccccc}
\hline
\textbf{Model} & \textbf{R=256} & \textbf{R=512} & \textbf{R=1024} & \textbf{R=2048} & \textbf{R=4096} & \textbf{R=8192} \\
\hline

FNO 
& 3.6038e-03 & 4.1226e-03 & 4.6048e-03 & 4.9082e-03 & 5.0679e-03 & 5.1495e-03 \\

SINO $I=1$ 
& 2.6836e-03 & 3.0529e-03 & 3.4430e-03 & 3.6811e-03 & 3.8064e-03 & 3.8704e-03 \\

SINO $I=2$ 
& 1.8061e-03 & 1.9723e-03 & 2.1593e-03 & 2.2689e-03 & 2.3270e-03 & 2.3568e-03 \\

SINO $I=4$ 
& 5.6948e-04 & 6.3532e-04 & 7.0056e-04 & 7.3600e-04 & 7.5465e-04 & 7.6419e-04 \\

SINO $I=8$ 
& 4.8622e-04 & 5.1652e-04 & 5.5206e-04 & 5.7300e-04 & 5.8421e-04 & 5.8999e-04 \\

SINO $I=16$ 
& 4.7280e-04 & 4.9733e-04 & 5.2427e-04 & 5.4068e-04 & 5.4952e-04 & 5.5409e-04 \\

\textbf{SINO $I=32 $}
& \textbf{3.5319e-04} & \textbf{3.6847e-04} & \textbf{3.8411e-04} & \textbf{3.9352e-04} & \textbf{3.9861e-04} & \textbf{4.0124e-04} \\

\hline
\end{tabular}
\end{table*}

\begin{figure}
    \centering
    \includegraphics[width=0.5\linewidth]{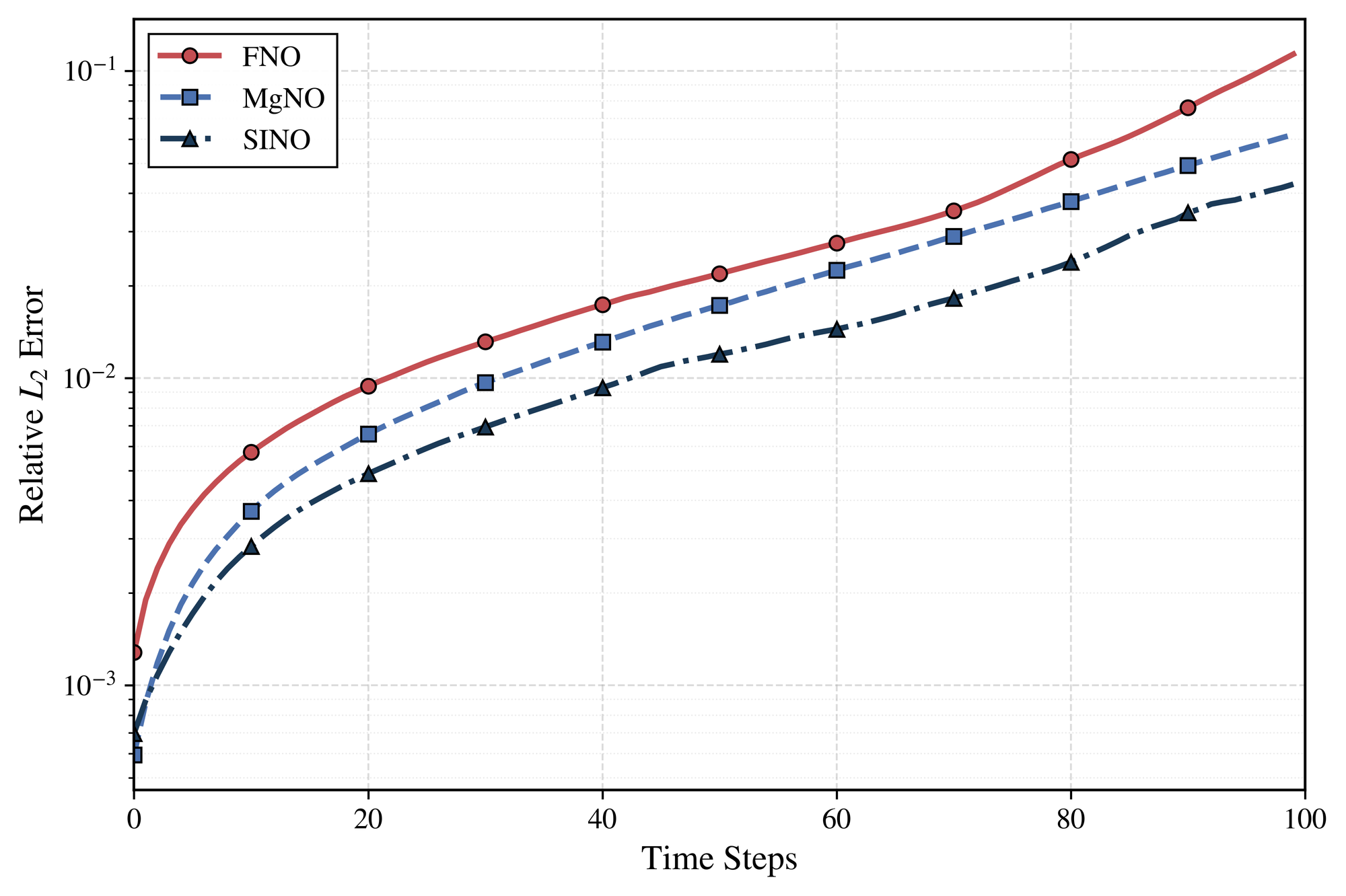}
    \caption{
    Long-horizon error accumulation on the advection--diffusion benchmark.
    The models are evaluated for $100$ autoregressive inference steps, and the
    relative $L_2$ error is reported at different time steps. Although the
    prediction error increases over time for all methods due to accumulated
    autoregressive error, SINO consistently achieves the lowest error throughout
    the entire horizon. Compared with FNO and MgNO, SINO exhibits slower error
    growth and better long-time stability.
    }
    \label{fig:long_horizon_advection_diffusion}
\end{figure}
}

\end{document}